\newcommand{\mean}{\operatorname{mean}}
\newcommand{\e}{\mathbf e_0}
\newcommand{\A}{\mathbf A}
\newcommand{\I}{\mathbf I}
\newcommand{\bb}{\mathbf b}
\newcommand{\M}{\mathbf M}
\newcommand{\nn}{\mathbf n}
\newcommand{\T}{\mathbf T}
\newcommand{\Talg}{\mathbf T_{alg}}
\newcommand{\Tss}{\mathbf{T}_{ss}}
\newcommand{\tpq}{t_{pq}}
\newcommand{\pp}{\mathbf{e}_1}
\newcommand{\qq}{\mathbf{e}_2}
\newcommand{\ppalg}{\mathbf{e}_{1, alg}}
\newcommand{\qqalg}{\mathbf{e}_{2, alg}}
\newcommand{\Ts}{\T_s}
\newcommand{\Xt}{\mathbf X_t}
\newcommand{\XM}{\mathbf X_M}
\newcommand{\X}{\mathbf X}
\newcommand{\Xs}{\mathbf X_s}
\newcommand{\Xss}{\mathbf X_{ss}}
\newcommand{\Tt}{\mathbf T_t}
\newcommand{\ez}{\mathbf e_3}
\newtheorem{teo}{Theorem}[section]
\newtheorem{lema}[teo]{Lemma}
\newtheorem{coro}[teo]{Corollary}
\theoremstyle{remark} \newtheorem{remark}[teo]{Remark}}
\begin{document}

\title{On the Relationship between the One-Corner Problem and the $M$-Corner Problem for the Vortex Filament Equation\thanks{This work was supported by an ERCEA Advanced Grant 2014 669689 - HADE, by the MINECO projects MTM2014-53850-P and  SEV-2013-0323 and by the Basque Government project IT641-13.}}

\author{Francisco de la Hoz (UPV/EHU) and Luis Vega (UPV/EHU-BCAM)}

\maketitle

\begin{abstract}
In this paper, we give evidence that the evolution of the Vortex Filament Equation for a regular $M$-corner polygon as initial datum can be explained at infinitesimal times as the superposition of $M$ one-corner initial data. Therefore, and due to periodicity, the evolution at later times can be understood as the nonlinear interaction of infinitely many filaments, one for each corner. This interaction turns out to be some kind of nonlinear Talbot effect. We also give very strong numerical evidence of the transfer of energy and linear momentum for the $M$-corner case.
\end{abstract}

\section{Introduction}

The binormal flow,
\begin{equation*}
\Xt = \kappa\mathbf b,
\end{equation*}

\noindent where $t$ is the time, $\kappa$ the curvature, and $\mathbf b$ the binormal component of the
Frenet-Serret formulae, appeared first in 1906 \cite{darios} as an approximation of the dynamics of a vortex filament under the Euler equations, and was rederived in \cite{arms}, in an attempt to describe the evolution of the coherent structures that appear in turbulent flows. It is also known as the vortex filament equation (VFE) or the localized induction approximation (LIA). The reason for the latter is that just local effects are considered in the Biot-Savart integral that allows to compute the velocity from the vorticity. This is assumed to be a very strong hypothesis \cite{Saffman}, because, among other things, it does not allow for the possibility of the streching of the filament. Nevertheless, at the qualitative level, VFE seems to capture some of the important examples of vortex filaments, namely the straight line, the circle, and the helix (see also \cite{JS}, for some recent theoretical results).

The equation is equivalent to
\begin{equation}
\label{e:xt}\Xt = \Xs\wedge\Xss,
\end{equation}

\noindent where $\wedge$ is the usual cross-product, and $s$ is the arc-length parameter. Since the length of the tangent vector $\T = \X_s$ remains constant, we can assume, without loss of generality, that $|T| = 1$, for all $t$ (along this paper, we will simply write $|\cdot|$ instead of $\|\cdot\|_2$, in order to denote the Euclidean norm of a vector). Differentiating \eqref{e:xt} with respect to $s$, we get the Schr\"odinger map onto the sphere:
\begin{equation}
\label{e:schmap}\Tt = \T\wedge\Tss.
\end{equation}

\noindent In this work, we are interested in the evolution of \eqref{e:xt}-\eqref{e:schmap} for initial data with corners. The existence of solutions for one-corner initial data,
\begin{equation}
\label{e:1cornerproblem}
\begin{cases}
\Xt = \Xs \wedge \Xss, \qquad t > 0, \quad s\in\mathbb R,
 \cr
\X(s, 0) = \A^-\chi_{(-\infty,0]}(s) + \A^+ s\chi_{[0,+\infty)}(s), \qquad |\A^-| = |\A^+| = 1,
\end{cases}
\end{equation}

\noindent which is the simplest case, has been proven in \cite{GutierrezRivasVega2003} (see also \cite{delahoz2007} for the corresponding hyperbolic space problem); and numerical simulations of these solutions have been carried out in \cite{buttke87,DelahozGarciaCerveraVega09}. Furthermore, the fact that this kind of solutions yields a well-posed problem has been shown in a series of papers \cite{BV0,BV1,BV2,BV3}; in particular, \cite{BV3}, closes the question, because it proves that the problem with one-corner initial data is well-posed in an adequate function space.

Along this paper, all the vectors are given in column form. Since \eqref{e:xt} is rotation invariant, it can be assumed, without loss of generality, that $\A^-$ and $\A^+$ are of the form
\begin{equation}\label{e:A+A-}
\A^- = (A_1, -A_2, -A_3)^T, \qquad \A^+ = (A_1, A_2, A_3)^T.
\end{equation}

\noindent Then, using selfsimilarity arguments, we can conclude that the solution of \eqref{e:1cornerproblem} is a one-parameter family of regular curves developing a singularity at finite time. More precisely, looking for solutions satisfying 
\begin{equation}
\label{e:1selfsimilar}\X(s, t) = \sqrt t\X(s/\sqrt t, 1),\quad t>0,
\end{equation}
 it can be proved that the solution can be described by the following system of ODEs:
\begin{equation}
\label{e:1cornerproblemsystem}
\begin{split}
\Xs & = \T,
 \cr
\begin{pmatrix}
\T \cr \nn \cr \bb
\end{pmatrix}_s
& =
\begin{pmatrix}
0 & \frac{c_0}{\sqrt t} & 0
 \cr
-\frac{c_0}{\sqrt t} & 0 & \frac{s}{2t}
 \cr
0 & -\frac{s}{2t} & 0
\end{pmatrix}
\cdot
\begin{pmatrix}
\T \cr \nn \cr \bb
\end{pmatrix},
\end{split}
\end{equation}

\noindent with initial conditions
\begin{equation}
\label{e:1cornerproblemsystem0}
\begin{cases}
\X(0, t) = 2c_0\sqrt t(0, 0, 1)^T,
 \cr
\T(0, t) = (1, 0, 0)^T,
 \cr
\nn(0, t) = (1, 0, 0)^T,
 \cr
\bb(0, t) = (1, 0, 0)^T;
\end{cases}
\end{equation}

\noindent and where $A_1$ and the parameter family are related by
\begin{equation}
\label{e:A1}
A_1 = e^{-(c_0^2/2)\pi}.
\end{equation}

\noindent Even if $A_2$ and $A_3$ trivially satisfy $A_2^2 + A_3^2 = 1 - e^{-c_0^2\pi}$, expressing them individually as functions of $c_0$ requires much more involved relationships \cite{GutierrezRivasVega2003}:
\begin{equation*}
\begin{split}
A_2 & = 1 - \frac{e^{-(c_0^2/4)\pi}}{8\pi}\sinh(\pi c_0^2/2)|c_0\Gamma(ic_0^2/4) + 2^{i\pi/4}\Gamma(1/2 + ic_0^2/4)|^2,
 \cr
A_3 & = 1 - \frac{e^{-(c_0^2/4)\pi}}{8\pi}\sinh(\pi c_0^2/2)|c_0\Gamma(ic_0^2/4) - 2e^{-i\pi/4}\Gamma(1/2 + ic_0^2/4)|^2.
\end{split}
\end{equation*}

\noindent Along this paper, we use 
\begin{equation}
\label{e:selfsimilar}\X_{c_0},\quad \T_{c_0}, \quad\nn_{c_0},\quad \text{and}\quad \bb_{c_0},
\end{equation}

\noindent whenever we need to refer explicitly to the solution of the one-corner problem corresponding to a given value of $c_0$, and with $\A^-$ and $\A^+$ being of the form \eqref{e:A+A-}, i.e., the solution of \eqref{e:1cornerproblemsystem} and \eqref{e:1cornerproblemsystem0}. On the other hand, if $\theta$ denotes the inner angle of the corner, it is straightforward to check that
\begin{equation}
\label{e:thetaA1}
\cos(\theta) = 1 - 2A_1^2.
\end{equation}

\noindent In the especial case $\theta = \pi$, we have $c_0 = 0$, $(A_1, A_2, A_3)^T = (1, 0, 0)^T$, and the solution of \eqref{e:1cornerproblem} is simply $\X_{c_0}(s, t) = s(1, 0, 0)^T$, i.e., a line.

Even if the one-corner problem is well understood, the multiple-corner problem has started to receive attention only recently \cite{Didier,Didier2}. In \cite{HozVega2014}, we have studied for the first time the evolution of \eqref{e:xt}-\eqref{e:schmap}, taking a regular planar polygon of $M$ sides as the initial datum; this case will be referred to as the $M$-corner problem. Later on, in \cite{HozVega2014b}, we have shown the essentially random character of the evolution. The main ideas of \cite{HozVega2014} are as follows. In order to avoid working with the curvature $\kappa$ and the torsion $\tau$, we reformulate the Frenet-Serret formulae, without loss of generality, as
\begin{equation}
\label{e:Te1e2}
\begin{pmatrix}
\T \cr \pp \cr \qq
\end{pmatrix}_s =
\begin{pmatrix}
 0 & \alpha & \beta
 \cr
 - \alpha & 0 & 0 \cr -\beta & 0
& 0
\end{pmatrix} \cdot
\begin{pmatrix}
 \T \cr \pp \cr \qq
\end{pmatrix}.
\end{equation}

\noindent Then, the Hasimoto transformation \cite{hasimoto} adopts the form
\begin{equation*}
\psi = \alpha + i\beta,
\end{equation*}

\noindent and transforms \eqref{e:xt}-\eqref{e:schmap} into the nonlinear Schr\"odinger (NLS) equation:
\begin{equation}
\label{e:schr}
\psi_t = i\psi_{ss} + i\left(\frac{1}{2}(|\psi|^2 + A(t))\right)\psi,
\end{equation}

\noindent where $A(t)$ is a certain time-dependent real constant. The idea is to work with \eqref{e:schr}, and, at a given $t$, to recover $\X(s, t)$ and $\T(s, t)$ from $\psi(s, t)$, by integrating \eqref{e:Te1e2} up to a rigid movement that can be determined by the symmetries of the problem.

Observe that  $\int_s | \psi(s,t)|^2\,ds$ is formally preserved; as a consequence, we have that, at the formal level, the following two energies are also conserved:
\begin{equation}
\label{energy1}
\int_s |\X_t(s,t)|^2\, ds,
\end{equation}
\begin{equation}
\label{energy2}
\int_s |\T_s(s,t)|^2\, ds.
\end{equation}

\noindent The first one is related to the kinetic energy of the vortex filament \eqref{e:xt}, while the second one can be seen as an interchange energy, if we recall that \eqref{e:schmap} is related to the Landau-Lifshitz equation of ferromagnetism \cite{Ishimori1982,Lakshmanan2011}.

Given a regular planar polygon of $M$ sides as $\X(s, 0)$, there is no torsion; hence, $\psi(s, 0)$ is precisely the curvature of the polygon, which is a $2\pi/M$-periodic sum of Dirac deltas:
\begin{equation*}
\psi(s, 0) \equiv \kappa(s) = \frac{2\pi}{M}\sum_{k = -\infty}^{+\infty}\delta\left(s - \frac{2\pi k}{M}\right).
\end{equation*}

\noindent Then, bearing in mind the Galilean invariance of \eqref{e:schr} and, assuming uniqueness, we are able to obtain $\psi(s, t)$ at any rational multiple of $2\pi / M^2$. Defining $t_{pq} \equiv (2\pi/M^2)(p/q)$, $\gcd(p, q) = 1$, we show in \cite{HozVega2014} that
\begin{equation}
\label{e:hatpsistpq}
\hat\psi(k, t_{pq})  = \hat\psi(0, t_{pq})e^{-i(Mk)^2},
\end{equation}
\noindent where $\hat\psi(0, t_{pq})$, which, without loss of generality, is assumed to be real, is the mean of $\psi(s, t_{pq})$ over a period:
\begin{equation*}
\hat\psi(0, t_{pq}) = \frac{M}{2\pi}\int_0^{2\pi/M}\psi(s, t_{pq})ds.
\end{equation*}
\noindent As a consequence, we obtain that
\begin{equation}
\label{e:psistpq}
\psi(s, t_{pq})= \frac{2\pi}{Mq}\hat\psi(0, t_{pq})\sum_{k = -\infty}^{+\infty}\sum_{m = 0}^{q - 1}G(-p, m, q) \delta\left(s - \frac{2\pi k}{M} - \frac{2\pi m}{Mq}\right),
\end{equation}

\noindent where
\begin{equation*}
G(a, b, c) = \sum_{l=0}^{c - 1}e^{2\pi i (al^2 + bl)/c}
\end{equation*}

\noindent denotes a generalized quadratic Gau{\ss} sum. An important property is that $G$ can be represented as
\begin{equation*}
G(-p, m, q) =
\begin{cases}
\sqrt q e^{i \theta_m}, & \mbox{if $q\equiv 1\bmod2$},
 \\
\sqrt{2q} e^{i \theta_m}, & \mbox{if $q\equiv 0\bmod2$ $\wedge$ $q/2\equiv m\bmod 2$},
 \\
0, & \mbox{if $q\equiv 0\bmod2$ $\wedge$ $q/2\not\equiv m\bmod 2$},
\end{cases}
\end{equation*}

\noindent for certain $\theta_m$ depending also on $q$. Hence, if we define
\begin{equation}
\label{e:rhom}
\rho_m =
\begin{cases}
\dfrac{2\pi}{M\sqrt q}\hat\psi(0, t_{pq}), & \mbox{if $q\equiv1\bmod2$},
 \cr
\dfrac{2\pi}{M\sqrt {\tfrac{q}{2}}}\hat\psi(0, t_{pq}), & \mbox{if $q\equiv0\bmod2$ $\wedge$ $q/2\equiv m\bmod 2$},
 \cr
0, & \mbox{if $q\equiv0\bmod2$ $\wedge$ $q/2\not\equiv m\bmod 2$},
\end{cases}
\end{equation}

\noindent we can represent \eqref{e:psistpq} as
\begin{equation}
\label{psistpq}
\psi(s, t_{pq}) = \sum_{k=-\infty}^{+\infty}\sum_{m = 0}^{q - 1}\rho_m e^{i \theta_m} \delta\left(s - \frac{2\pi k}{M} - \frac{2\pi m}{Mq}\right).
\end{equation}

\noindent The coefficients multiplying the Dirac deltas are in general not real, except for $t = 0$ and $t_{1,2} = \pi/M^2$. Moreover, when $q$ is even, half of the $\rho_m$ are zero. Therefore, $\psi(s, t_{pq})$ does not correspond to a planar polygon, but to a skew polygon with $Mq$ (for $q$ odd) or $Mq/2$ (for $q$ even) equal-lengthed sides.

In order to recover $\X$ and $\T$ from $\psi$, we observe that every addend $\rho_m e^{i\theta_m}\delta(s - (2\pi m)/(Mq))$ in \eqref{psistpq}, with $\rho_m\not=0$, induces a rotation on $\T$, $\pp$ and $\qq$. Denoting $\rho_m\not=0$ simply as $\rho$,
\begin{equation}
\label{e:Mm}
\M_m =
\begin{pmatrix}
\cos(\rho) & \sin(\rho) \cos(\theta_m) & \sin(\rho) \sin(\theta_m)
 \\
-\sin(\rho) \cos(\theta_m) & \cos(\rho) \cos^2(\theta_m) + \sin^2(\theta_m) & (\cos(\rho) - 1)\cos(\theta_m)\sin(\theta_m)
 \\
-\sin(\rho) \sin(\theta_m) & (\cos(\rho) - 1) \cos(\theta_m)\sin(\theta_m) & \cos(\rho) \sin^2(\theta_m) + \cos^2(\theta_m)
\end{pmatrix}
\end{equation}

\noindent is the matrix such that
\begin{equation*}
\left(
 \begin{array}{c}
\T(\tfrac{2\pi m}{Mq}^+)^T
 \cr
\hline
\pp(\tfrac{2\pi m}{Mq}^+)^T
 \cr
\hline
\qq(\tfrac{2\pi m}{Mq}^+)^T
 \end{array}
\right)
 = \M_m\cdot
\left(
 \begin{array}{c}
\T(\tfrac{2\pi m}{Mq}^-)^T
 \cr
\hline
\pp(\tfrac{2\pi m}{Mq}^-)^T
 \cr
\hline
\qq(\tfrac{2\pi m}{Mq}^-)^T
 \end{array}
\right).
\end{equation*}

\noindent Notice that, when $\rho_m = 0$, $\M_m$ is the identity matrix $\I$. From \eqref{e:Mm}, it follows that $\rho$ is the angle between any two adjacent sides. Imposing that \eqref{e:psistpq} corresponds to a closed polygon, i.e., that
\begin{equation*}
\M_{Mq-1}\cdot \M_{Mq-2} \cdot \ldots \cdot \M_1 \cdot \M_0 = \I,
\end{equation*}

\noindent there is concluding evidence (see \cite{HozVega2014}) that $\rho$ is given by
\begin{equation*}
\cos(\rho) =
\begin{cases}
2\cos^{2/q}(\pi/M) - 1, & \mbox{if $q\equiv1\bmod2$},
 \cr
2\cos^{4/q}(\pi/M) - 1, & \mbox{if $q\equiv0\bmod2$};
\end{cases}
\end{equation*}

\noindent and the value of $\hat\psi(0, t_{pq})$ follows from \eqref{e:rhom}. In some cases, it will be preferable to work with $\cos(\rho/2)$, which results in a slightly simpler expression:
\begin{equation}
\label{e:cosrho}
\cos(\rho/2) =
\begin{cases}
\cos^{1/q}(\pi/M), & \mbox{if $q\equiv1\bmod2$},
 \cr
\cos^{2/q}(\pi/M), & \mbox{if $q\equiv0\bmod2$}.
\end{cases}
\end{equation}

\noindent The previous ideas suggest very strongly that $\psi(s, t)$ is also periodic in time, with period $2\pi / M^2$. Furthermore, bearing in mind the symmetries of the problem, it follows that $\T$ is also periodic in time, while $\X$ is periodic in time, up to a movement of its center of mass with constant upward velocity $c_M$. This last sentence seems to be true only for the $M$-corner problem, i.e., for regular polygons. In fact, in the case of nonregular polygons, it is natural to expect that time periodicity is lost.

Remark that \eqref{e:psistpq}, with $\hat\psi(0, t_{pq})=1$,  is the mathematical expression of the so-called Talbot effect in optics. This is a linear effect that, using Fresnel diffraction, can be described by means of the constant coefficient Schr\"odinger equation, i.e., \eqref{e:schr} without the nonlinear potential (see, for instance, \cite{Berry}). One of the consequences of this Talbot effect at the qualitative level is the so-called axis switching phenomena. In fact, it is easily seen from the values of the Gau{\ss} sums at half the period that, at that time, the same $M$-polygon reappears, but with the axis switched by an angle of $\pi/M$. This phenomenon has been observed and largely documented in the literature related to the evolution of noncircular jets (see, for example, the survey \cite{GG}). Moreover, it is also observed that, at other rational times, some more complicated structures in the shape of skew polygons appear in real fluids, when nozzles with the shape of equilateral triangles or squares are considered (see, for example,  \cite[Figure 6]{GG}, for the case of a triangle, and also \cite[Figure 10]{GGP} and \cite[p. 1492]{GGP}, where it is said ``[...]a consistent eightfold distribution pattern is also suggested[...]" for  nozzles with a squared shape).

The Talbot effect in nonlinear dispersive equations has been studied at the numerical level in \cite{CO1,CO2,Olver}, and experimentally in nonlinear optics in \cite{ZWZX}. Theoretical results on the nonlinear setting are obtained in \cite{ET1,ET2}. These are results at the subcritical level of regularity, which typically implies that the nonlinear potential is considered as an external perturbative force which is small with respect to the linear part of the equation. Hence, its contribution is obtained through Duhamel's integral, which has a smoothing effect. As a consequence, the complex behavior exhibited by the solutions is due to the linear term in Duhamel's expression. Therefore, it is very natural to try to find nonlinear Talbot effects whose complexity is not only a consequence of a linear behavior. In our case, a first hint of this is precisely the value of $\hat\psi(k, t_{pq})$, which does not remain constant at all times, as in the linear setting. Observe that, from \eqref{e:hatpsistpq}, we have that
\begin{equation}
\label{e:supnorm}|\hat\psi(k, t_{pq})|=|\hat\psi(0, t_{pq})|.
\end{equation}

\noindent Other nonlinear results have been recently obtained in \cite{BanicaVega2016}, in the case of a filament with one corner and small perturbations of it. In particular, some transfer of energy, measured in an appropriate norm suggested by \eqref{e:supnorm}, and the lack of conservation of the linear momentum are proved.  One of the important consequences obtained in this paper is that, in Sections \ref{s:energy} and \ref{s:momentum}, we give very strong numerical evidence that these results are also true in the case of the $M$-corner problem. 

Along this paper, we denote as $\X_M$ and $\T_M$ the exact solution of the $M$-corner problem; as $\X_{alg}$ and $\T_{alg}$, the algebraically constructed solution of the $M$-corner problem (where $\X_{alg}$ is constructed in such a way that $\mean(\X_{alg}) = (0, 0, 0)^T$); and, as $\X_{num}$ and $\T_{num}$, the numerical solution of the $M$-corner problem, obtained by means of a fourth-order Runge-Kutta scheme applied to \eqref{e:xt}-\eqref{e:schmap}. The details can be found in \cite{HozVega2014}.

The structure and the content of this paper is as follows. In Section \ref{s:evidence}, we offer concluding numerical evidence that the $M$-corner problem can be explained as a superposition of $M$ one-corner problems at $t = 0^+$. This has heavy implications, as, for instance, the recovery of \eqref{e:A1} in a completely novel way. Furthermore, in Section \ref{s:analytical}, we use the relationship between the one-problem and the $M$-corner problems to determine the velocity of the center of mass $c_M$:
\begin{equation}
\label{e:cM}
c_M = \frac{-2\ln(\cos(\pi/M))}{(\pi/M)\tan(\pi/M)} = \frac{\ln(1 + \tan^2(\pi/M))}{(\pi/M)\tan(\pi/M)}.
\end{equation}

\noindent This is done by reducing the calculation of $c_M$ to the computation of an integral (see \eqref{e:cMlimit2}) that appears in the one-corner problem in a natural way. We also integrate numerically \eqref{e:cMlimit2} for a large set of $M$, and the results fully agree with \eqref{e:cM}. Besides, we approximate $c_M$ directly from the numerical simulation of \eqref{e:xt}-\eqref{e:schmap}, and compare it with \eqref{e:cM}, obtaining coherent results. On the other hand, in Section \ref{s:algebraic}, under some hypotheses, we obtain \eqref{e:cM} by algebraic means, using an approach completely unrelated to that in Section \ref{s:analytical}. Therefore, there is in our opinion concluding evidence (analytical, algebraic and numerical) that \eqref{e:cM} is correct.

In Section \ref{s:energy}, we study numerically the transfer of energy for $M = 3$, measured in the norm $\|\widehat{\T_{M,s}}(t)\|_{\infty} = \max_k|k\, \widehat{\T_{M}}(k, t)|$.  The lack of continuity proved in \cite{BanicaVega2016} is clearly seen at all the rational times (see Figure \ref{f:maxhatTsq200006}), due to the fact that the creation of corners happens at those times. More interestingly, the jumps do not seem to be bounded. In fact, looking carefully at the symmetries of the problem, we rigorously simplify the expression of the tangent vector and basically reduce it to computing two discrete Fourier transforms of $q$ or $q/2$ elements, with $q$ the denominator of the rational time (see \eqref{e:TsMk1} and \eqref{e:TsMk}). The final conclusion is the logarithmic fitting found in \eqref{logg}, which gives a very strong numerical evidence that these jumps are indeed unbounded. We consider this latter fact a far reaching one. As far as we know, there is no theoretical result in this direction for periodic solutions of \eqref{e:schmap}.

In Section \ref{s:momentum}, we study numerically the transfer of linear momentum, which exhibits an intermittent behavior (see Figure \ref{f:momentum}), very reminiscent of the so-called Riemman's nondifferentiable function \cite{Ja}:
\begin{equation}
\label{e:riemann1}
\phi(x) = \sum_{n = 1}^\infty\frac{\sin(\pi n^2 x)}{n^2}.
\end{equation}

\noindent In particular, in Figure \ref{f:fingerprintriemann}, we approximate the Fourier coefficients of the second component of the momentum, for $M = 3$, and show that the leading terms are multiples of perfect squares, in analogy with \eqref{e:riemann1}. 

In Section \ref{s:observations}, we study briefly the case of a nonregular polygon from a numerical point of view, and, based on the results, we conjecture how \eqref{e:cosrho} is to be generalized. Moreover, as in the regular case, the corners do not see each other at infinitesimal times; but, unlike in the regular case, the periodicity in time seems to be lost.

Finally, in Section \ref{s:Conclusions}, we draw the main conclusions.

\section{Numerical relationship between the $M$-corner problem and the one-corner problem}

\label{s:evidence}

We claim that the $M$-corner problem can be understood as a superposition of $M$ one-corner problems at $t = t_{1,q}$, $q\gg1$, or, in other words, that at infinitesimal times, the corners ``do not see'' one another. In order to compare both cases, we integrate \eqref{e:1cornerproblemsystem} at $t = t_{1,q}$, $q\gg1$, and in such a way that its orientation is in agreement with the $M$-corner problem. Therefore, since the inner angle between two adjacent sides of an $M$-sided regular polygon is
\begin{equation}
\label{e:theta}
\theta = \pi - 2\pi/M,
\end{equation}

\noindent it follows from \eqref{e:A1} and \eqref{e:thetaA1} that, for a given $M$, we have to choose
\begin{equation}
\label{e:c_0}
c_0 = \left[-\frac{2}{\pi}\ln\left(\cos\left(\frac{\pi}{M}\right)\right)\right]^{1/2}.
\end{equation}

\noindent Moreover, bearing in mind \eqref{e:A+A-}, i.e.,
\begin{equation*}
\lim_{s\to-\infty}\T_{c_0}(s) = \A^- = (A_1, -A_2, -A_3)^T, \qquad \lim_{s\to\infty}\T_{c_0}(s) = \A^+ = (A_1, A_2, A_3)^T,
\end{equation*}

\noindent we have to rotate $\X_{c_0}$, $\T_{c_0}$, $\nn_{c_0}$ and $\bb_{c_0}$ by means of a rotation matrix $\M$, in such a way that $\X_{rot}\equiv\M\cdot\X_{c_0}$, $\T_{rot}\equiv\M\cdot\T_{c_0}$, etc., where the subscript $rot$ indicates a one-corner problem solution rotated in order to match the $M$-corner problem. The matrix $\M$ is determined by imposing
\begin{equation*}
\lim_{s\to-\infty}\T_{rot}(s) = (\cos(2\pi/M),-\sin(2\pi/M),0)^T, \qquad \lim_{s\to\infty}\T_{rot}(s) = (1, 0, 0)^T,
\end{equation*}

\noindent i.e., $\lim_{s\to\pm\infty}\T_{rot}(s)$ takes the values of the tangent vector of an $M$-sided regular polygon at $s = 0^\pm$, $t = 0$. This can be achieved by defining
\begin{equation}
\label{e:rotationM}
\M =
\begin{pmatrix}
\cos(\tfrac{\pi}{M}) & \sin(\tfrac{\pi}{M}) & 0
 \\
-\sin(\tfrac{\pi}{M}) & \cos(\tfrac{\pi}{M}) & 0
 \\
0 & 0 & 1
\end{pmatrix}
\cdot
\begin{pmatrix}
1 & 0 & 0
 \\
0 & \frac{A_2}{\sqrt{A_2^2+A_3^2}} & \frac{A_3}{\sqrt{A_2^2+A_3^2}}
 \\
0 & \frac{-A_3}{\sqrt{A_2^2+A_3^2}} & \frac{A_2}{\sqrt{A_2^2+A_3^2}}
\end{pmatrix}.
\end{equation}

\noindent Hence,
\begin{equation}
\label{e:rotatedXT}
\begin{pmatrix}
X_{rot,1}
 \\
X_{rot,2}
 \\
X_{rot,3}
\end{pmatrix}
=
\begin{pmatrix}
-\pi/M
 \\
-\frac{\pi/M}{\tan(\pi/M)}
 \\
0
\end{pmatrix}
+
\M
\cdot
\begin{pmatrix}
X_{c_0, 1}
 \\
X_{c_0, 2}
 \\
X_{c_0, 3}
\end{pmatrix},
 \qquad
\begin{pmatrix}
T_{rot,1}
 \\
T_{rot,2}
 \\
T_{rot,3}
\end{pmatrix}
=
\M
\cdot
\begin{pmatrix}
T_{c_0, 1}
 \\
T_{c_0, 2}
 \\
T_{c_0, 3}
\end{pmatrix},
\end{equation}

\noindent etc., where $(-\pi/M, -\pi/(M\tan(\pi/M)), 0)^T$ is the corner of an $M$-sided regular polygon at $s = 0$, $t = 0$.

In our numerical experiments, we have distinguished three cases, according to whether $q\equiv1\bmod2$, $q\equiv0\bmod4$, or $q\equiv2\bmod4$. On the one hand, we have computed $\T_{alg}(s, t_{1,q})$ at the values $s=s_j$ corresponding to the middle points of the sides of the skew polygon, where $\T_{alg}$ is continuous: $s_j = \pi (2j-1)/(Mq)$, $j = -(q - 1) / 2, \ldots, (q + 1) / 2$, if $q\equiv1\bmod2$; $s_j = 2\pi(2j - 1) / (Mq)$, $j = -q/4+1, \ldots, q/4$, if $q\equiv0\bmod4$; and $s_j = 4\pi j/(Mq)$, $j = -(q - 2) / 4, \ldots, (q - 2) / 4$, if $q\equiv2\bmod4$. On the other hand, we have approximated numerically the corresponding $\T_{rot}(s, t_{1,q})$, at those same $s = s_j$, by integrating numerically \eqref{e:1cornerproblemsystem}-\eqref{e:1cornerproblemsystem0} at $t = t_{1,q}$ by means of a fourth-order Runge-Kutta with $\Delta s = \pi/(M^2q)$, and rotating the resulting $\T_{c_0}$ according to \eqref{e:rotatedXT}.
\begin{table}[htb!]
\centering
\begin{tabular}{|c|c||c|c||c|c|}
\hline $q$ & $\max|\T_{alg} - \T_{rot}|$ & $q$ & $\max|\T_{alg} - \T_{rot}|$ & $q$ & $\max|\T_{alg} - \T_{rot}|$
\\
\hline $1001$ & $2.2653\cdot10^{-2}$ & $1000$ & $1.0636\cdot10^{-2}$ & $1002$ & $1.0625\cdot10^{-2}$
\\
\hline $2001$ & $1.6034\cdot10^{-2}$ & $2000$ & $7.5322\cdot10^{-3}$ & $2002$ & $7.5285\cdot10^{-3}$
\\
\hline $4001$ & $1.1343\cdot10^{-2}$ & $4000$ & $5.3287\cdot10^{-3}$ & $4002$ & $5.3273\cdot10^{-3}$
\\
\hline $8001$ & $8.0209\cdot10^{-3}$ & $8000$ & $3.7669\cdot10^{-3}$ & $8002$ & $3.7664\cdot10^{-3}$
\\
\hline $16001$ & $5.6700\cdot10^{-3}$ & $16000$ & $2.6606\cdot10^{-3}$ & $16002$ & $2.6604\cdot10^{-3}$
\\
\hline $32001$ & $4.0068\cdot10^{-3}$ & $32000$ & $1.8773\cdot10^{-3}$ & $32002$ & $1.8772\cdot10^{-3}$
\\
\hline $64001$ & $2.8307\cdot10^{-3}$ & $64000$ & $1.3236\cdot10^{-3}$ & $64002$ & $1.3235\cdot10^{-3}$
\\
\hline $128001$ & $2.0019\cdot10^{-3}$ & $128000$ & $9.3638\cdot10^{-4}$ & $128002$ & $9.3638\cdot10^{-4}$
\\
\hline
\end{tabular}
\caption{Maximum of $|\T_{alg}(s_j, t_{1,q}) - \T_{rot}(s_j, t_{1,q})|$, for $M = 5$. First case: $q\equiv1\bmod2$; $s_j = \pi (2j-1)/(Mq)$, $j = -(q - 1) / 2, \ldots, (q + 1) / 2$. Second case: $q\equiv0\bmod4$; $s_j = 2\pi(2j - 1) / (Mq)$, $j = -q/4+1, \ldots, q/4$. Third case: $q\equiv2\bmod4$; $s_j = 4\pi j/(Mq)$, $j = -(q - 2) / 4, \ldots, (q - 2) / 4$. In the three cases, the maximum Euclidean distance between $\T_{alg}$ and $\T_{rot}$ clearly decreases as $\mathcal O(1 / \sqrt q) = \mathcal O(\sqrt{t_{1,q}})$, so there is convergence between both approaches (see also Figure \ref{f:comparisonnorm}).}\label{t:XalgVsXrot}
\end{table}
	
In Table \ref{t:XalgVsXrot}, we give $\max_j|\T_{alg}(s_j, t_{1,q}) - \T_{rot}(s_j, t_{1,q})|$, for $M = 5$, and a number of different $q$. Here, $c_0 = 0.3673\ldots$. Observe that, in any of the three cases, when $q$ is (approximately) doubled, the maximum Euclidean distance between $\T_{alg}$ and $\T_{rot}$ is divided by approximately the square root of two, i.e., it is of the order of $\mathcal O(1 / \sqrt q) = \mathcal O(\sqrt{t_{1,q}})$. In Figure \ref{f:comparisonnorm}, using those same values of $q$, we plot $|\T_{alg}(s_j, t_{1,q}) - \T_{rot}(s_j, t_{1,q})|$ as a function of $s_j$; again, the agreement between $\T_{alg}$ and $\T_{rot}$ clearly improves, as $q$ increases. Furthermore, when $q$ is even, the best agreement happens at the smallest $s=|s_j|$ (where it is extremely high), and decreases monotonically as $s=|s_j|$ grows up. However, when, $q$ is odd, the plot of $|\T_{alg}(s_j, t_{1,q}) - \T_{rot}(s_j, t_{1,q})|$ seems to yield two curves that intersect near $s = 0$. An immediate explanation to this apparently strange behavior is given in Figure \ref{f:Tq1001q4001}. On the left-hand side, we have plotted $\{\T_{alg}(s_j, t_{1, 1001})\}$. Remark that $\{\T_{alg}(s_j, t_{1,q})\}$ is not an actual curve, but a collection of $Mq = 5\times1001$ points; on the one hand, if we plot them together with their joining segments, we get an annoying saw teeth effect; on the other hand, if we join each point with the second next one, we get two smooth curves between which the saw teeth are sandwiched. As $q$ odd grows, the two smooth curves that contain the saw teeth become more and more close, until they converge into a single one. On the right-hand side, we have plotted $\T_{alg}(s_j, t_{1, 4001})$; since $q$ is approximately four times as large, the size of the teeth is approximately one half.

\begin{figure}[!htb]
\centering
\includegraphics[width=0.325\textwidth, clip=true]{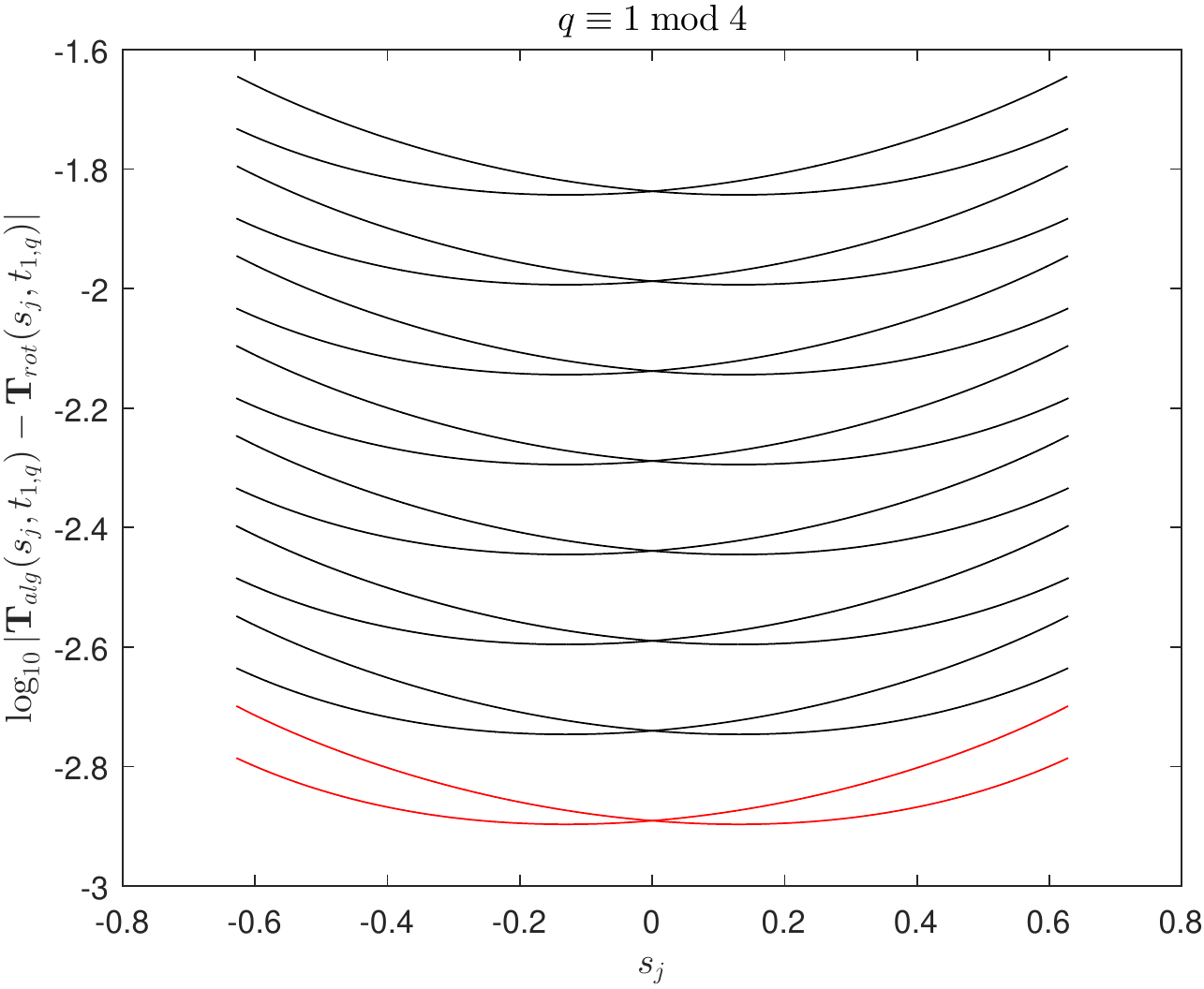}
\includegraphics[width=0.325\textwidth, clip=true]{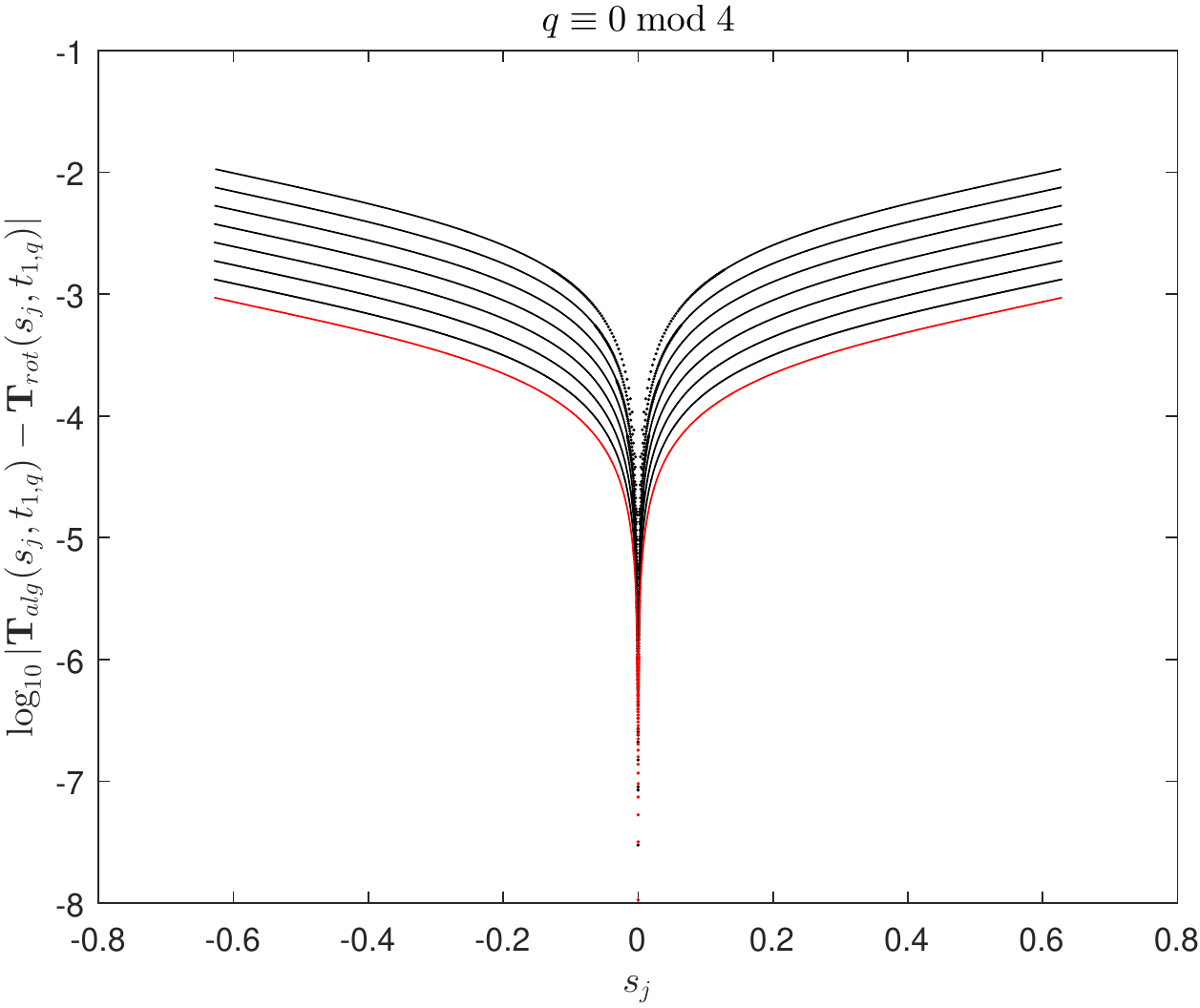}
\includegraphics[width=0.325\textwidth, clip=true]{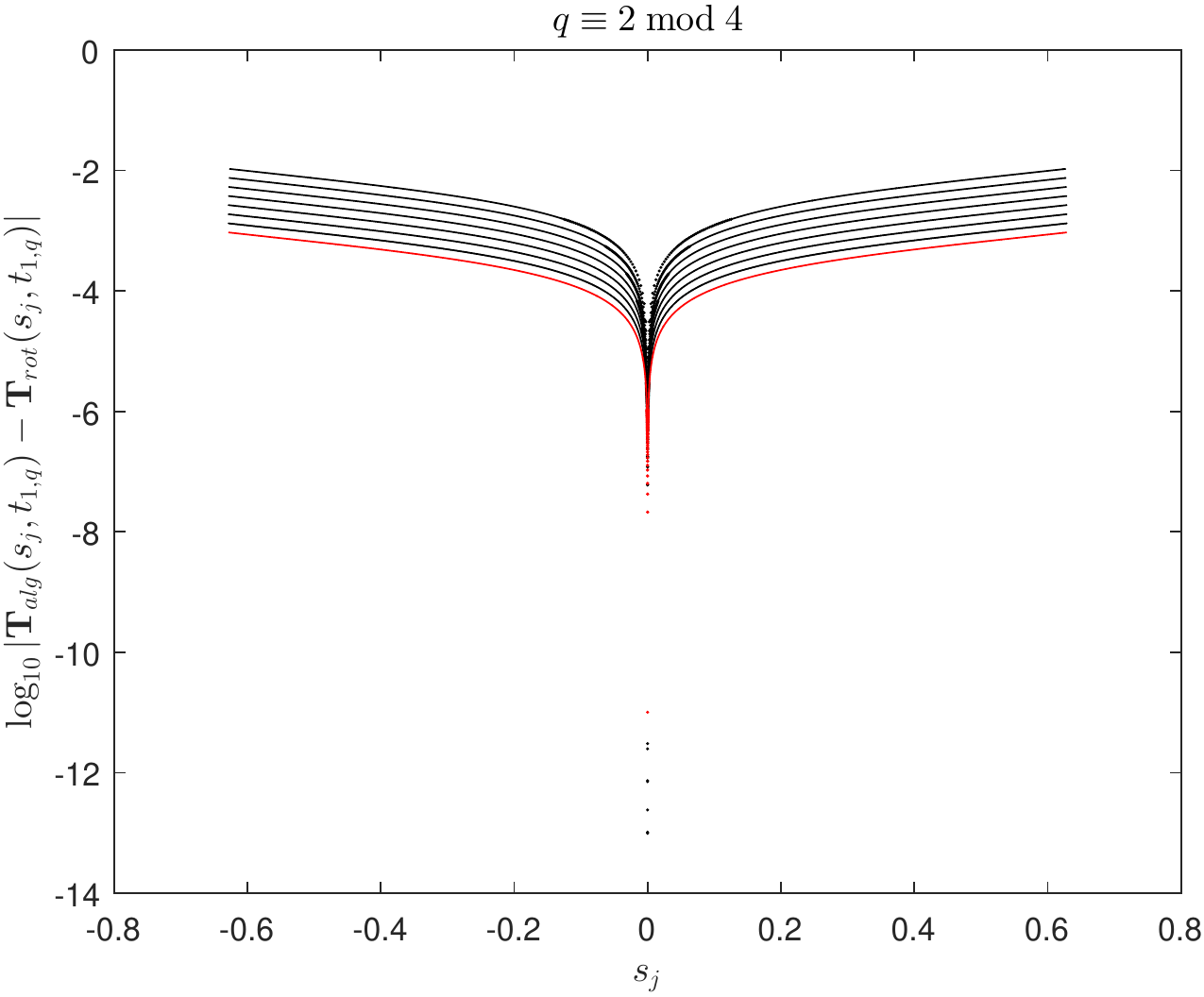}
\caption{Plots of $\log_{10}|\T_{alg}(s_j, t_{1,q}) - \T_{rot}(s_j, t_{1,q})|$, for the values of $q$ considered in Table \ref{t:XalgVsXrot}. In general, the agreement improves as $q$ grows up, so the best results, in red, correspond to $q = 128001$, $q = 128000$ and $q = 128002$, respectively.}\label{f:comparisonnorm}
\end{figure}

 \begin{figure}[!htb]
\centering
\includegraphics[width=0.49\textwidth, clip=true]{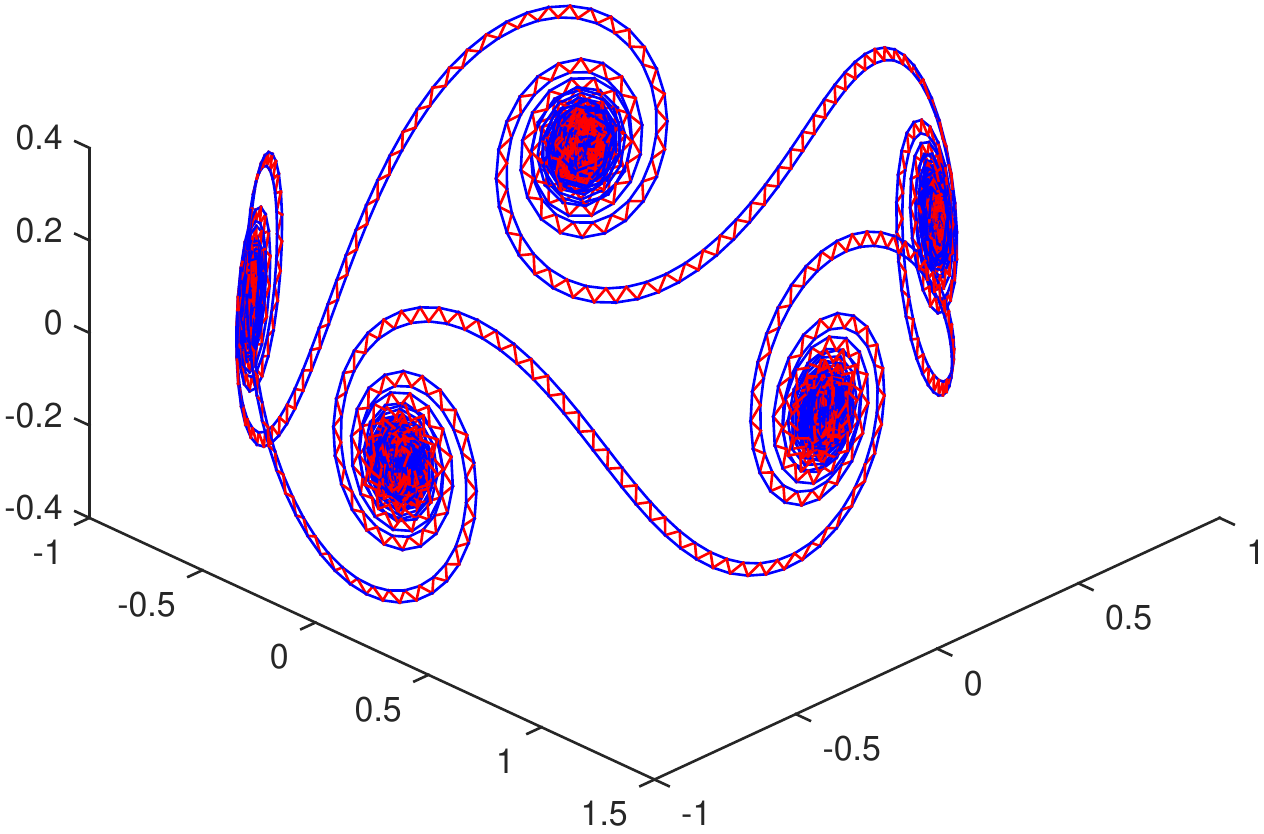}\includegraphics[width=0.49\textwidth, clip=true]{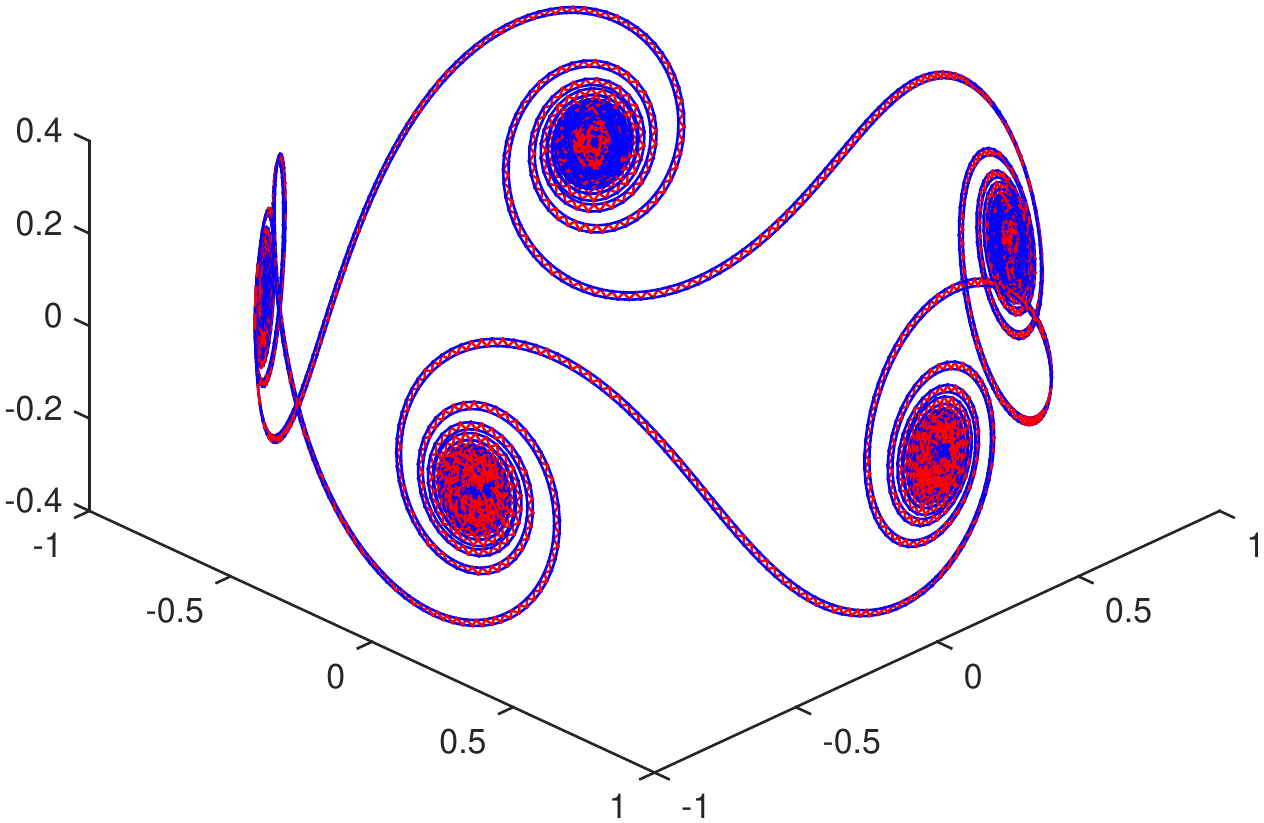}
\caption{Left: $\T_{alg}$, for $M = 5$, at $t = t_{1,1001}$. Right: $\T_{alg}$, for $M = 5$, at $t = t_{1,4001}$. We have sandwiched the teeth into two smooth curves. Moreover, the teeth corresponding to $t = t_{1,4001}$ are much less pronounced than those corresponding to $t = t_{1,1001}$.}\label{f:Tq1001q4001}
\end{figure}

\begin{figure}[!htb]
\centering
\includegraphics[width=0.5\textwidth, clip=true]{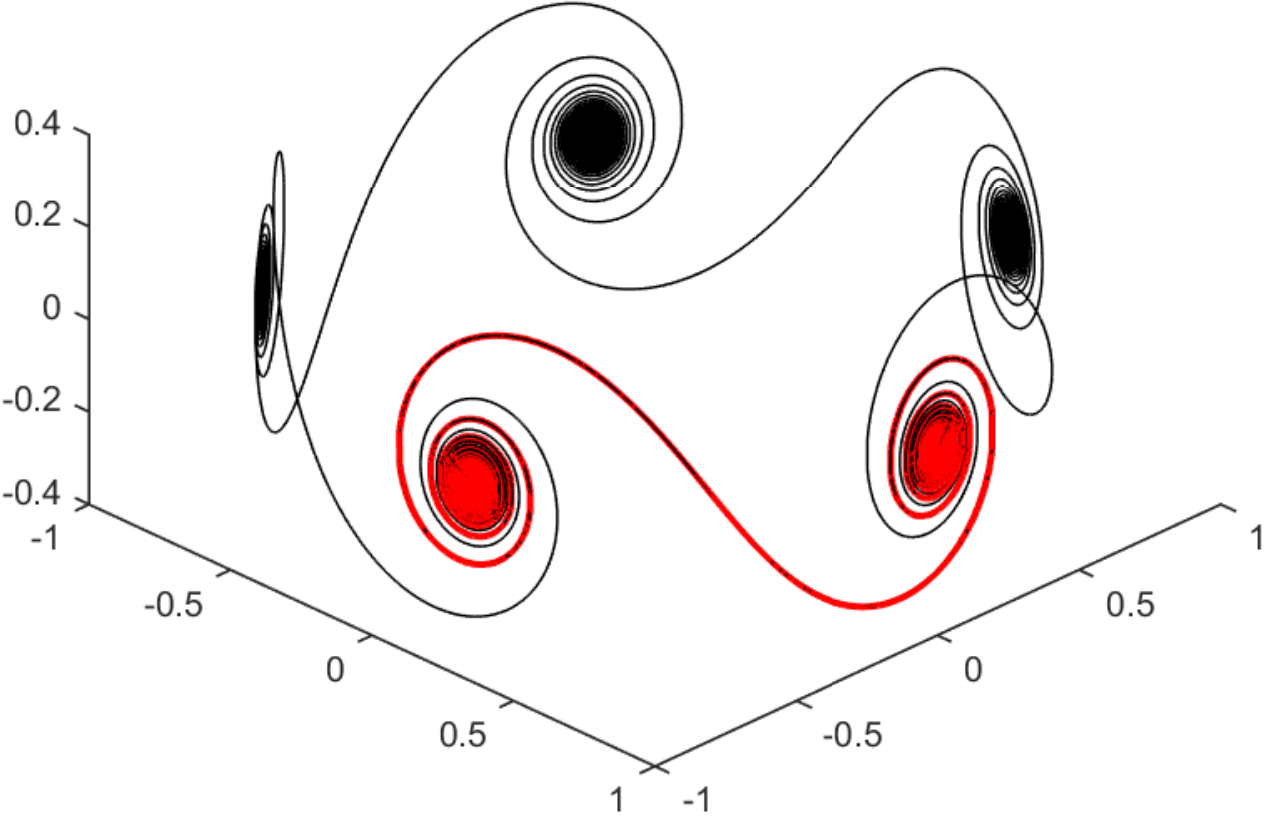}
\caption{$\T_{alg}$ (black), against $\T_{rot}$ (thick red), for $M = 5$, at $t = t_{1,128002}$. Except for the thicker stroke, the red curve is visually undistinguishable from the black one.}\label{f:comparison}
\end{figure}

Nonetheless, when $q$ is even, plotting $\{\T_{alg}(s_j, t_{1,q})\}$, together with their joining segments, yields an apparently very regular curve. For instance, in Figure \ref{f:comparison}, we have shown simultaneously $\T_{alg}$ and $\T_{rot}$, for $M = 5$, at $t = t_{1,128002}$; except for the thicker stroke, $\T_{rot}$ is visually undistinguishable from $\T_{alg}$. Furthermore, when $q$ is even, it is straightforward to give a good approximation of the curvature at the origin. Since $c_0 = \sqrt{t_{1,q}}|\T_s(0, t_{1, q})|$, we have to approximate $\T_s(0, t_{1,q})$, which is done by means of a finite difference. Let us consider, without loss of generality, $q\equiv2\bmod4$, because $\T_{alg}(s, t_{1,q})$ is continuous at $s = 0$ and at $s = \pm\Delta s$, where $\Delta s = 2\pi/(Mq/2) = 4\pi/(Mq)$. In Table \ref{t:c0origin}, we give $c_0 - \sqrt{t_{1,q}}|\T_{alg}(\Delta s, t_{1,q}) - \T_{alg}(-\Delta s, t_{1,q})| / (2\Delta s)$ for different values of $q$; from that table, the error in the approximation of $c_0$ clearly decreases as $\mathcal O(1/q) = \mathcal O(t_{1,q})$. Therefore, we are going to be able to recover analytically $c_0$ as
\begin{equation}
\label{e:c0limit}
c_0 = \lim_{\substack{q\to\infty \\ q\equiv2\bmod4}}\sqrt{t_{1,q}}\frac{|\T_{alg}(\tfrac{4\pi}{Mq}, t_{1,q}) - \T_{alg}(-\tfrac{4\pi}{Mq}, t_{1,q})|}{2\tfrac{4\pi}{Mq}}.
\end{equation}

\begin{table}[htb!]
\centering
\begin{tabular}{|c|c|c|}
\hline $\vphantom{\Bigg(}q\vphantom{\Bigg)}$ & $c_0 - \operatorname{approx}(c_0)$
\\
\hline $1002$ & $2.3300\cdot10^{-4}$
\\
\hline $2002$ & $1.1663\cdot10^{-4}$
\\
\hline $4002$ & $5.8352\cdot10^{-5}$
\\
\hline $8002$ & $2.9184\cdot10^{-5}$
\\
\hline $16002$ & $1.4594\cdot10^{-5}$
\\
\hline $32002$ & $7.2979\cdot10^{-6}$
\\
\hline $64002$ & $3.6490\cdot10^{-6}$
\\
\hline $128002$ & $1.8259\cdot10^{-6}$
\\
\hline
\end{tabular}
\caption{$c_0 - \sqrt{t_{1,q}}|\T_{alg}(\Delta s, t_{1,q}) - \T_{alg}(-\Delta s, t_{1,q})| / (2\Delta s)$. The error in the approximation of $c_0$ clearly decreases as $\mathcal O(1 / q) = \mathcal O(t_{1, q})$.} \label{t:c0origin}
\end{table}

\noindent Let us recall from \eqref{e:Mm} that
\begin{equation*}
\begin{split}
\left(
 \begin{array}{c}
\Talg(0, t_{1, q})^T
 \\
\hline
 \\[-1em]
\ppalg(0, t_{1, q})^T
 \\
\hline
 \\[-1em]
\qqalg(0, t_{1, q})^T
 \end{array}
\right)
 & = \M_{q-1}\cdot
\left(
 \begin{array}{c}
\Talg(-\frac{4\pi}{Mq}, t_{1, q})^T
 \\
\hline
 \\[-1em]
\ppalg(-\frac{4\pi}{Mq}, t_{1, q})^T
 \\
\hline
 \\[-1em]
\qqalg(-\frac{4\pi}{Mq}, t_{1, q})^T
 \end{array}
\right),
 \\
\left(
 \begin{array}{c}
\Talg(\frac{4\pi}{Mq}, t_{1, q})^T
 \\
\hline
 \\[-1em]
\ppalg(\frac{4\pi}{Mq}, t_{1, q})^T
 \\
\hline
 \\[-1em]
\qqalg(\frac{4\pi}{Mq}, t_{1, q})^T
 \end{array}
\right)
 & = \M_1\cdot
\left(
 \begin{array}{c}
\Talg(0, t_{1, q})^T
 \\
\hline
 \\[-1em]
\ppalg(0, t_{1, q})^T
 \\
\hline
 \\[-1em]
\qqalg(0, t_{1, q})^T
 \end{array}
\right).
\end{split}
\end{equation*}

\noindent Moreover, since we are interested only in the Euclidean norm of $\T_{alg}(4\pi/(Mq), t_{1,q}) - \T_{alg}(-4\pi/(Mq), t_{1,q})$, we can safely ignore the global rotation of $\T_{alg}$ and, thus, assume without loss of generality that $\Talg(0, t_{1, q})$, $\ppalg(0, t_{1, q})$ and $\qqalg(0, t_{1, q})$ form the identity matrix. Hence, up to a rotation,
\begin{equation*}
\Talg\left(-\frac{4\pi}{Mq}, t_{1, q}\right) =
\begin{pmatrix}
\cos(\rho) \cr -\sin(\rho)\cos(\theta_{q-1}) \cr -\sin(\rho)\sin(\theta_{q-1})
\end{pmatrix},
 \quad
\Talg(0, t_{1, q}) =
\begin{pmatrix}
1 \cr 0 \cr 0
\end{pmatrix},
 \quad
\Talg\left(\frac{4\pi}{Mq}\right) =
\begin{pmatrix}
\cos(\rho) \cr \sin(\rho)\cos(\theta_1) \cr \sin(\rho)\sin(\theta_1)
\end{pmatrix},
\end{equation*}

\noindent so
\begin{align}
\label{e:Talgnorm}
\left|\T_{alg}\left(\frac{4\pi}{Mq}, t_{1,q}\right) - \T_{alg}\left(-\frac{4\pi}{Mq}, t_{1,q}\right)\right| & =
\left|\begin{pmatrix}
0
 \cr
\sin(\rho)(\cos(\theta_1) + \cos(\theta_{q-1})).
 \cr
\sin(\rho)(\sin(\theta_1) + \sin(\theta_{q-1}))
\end{pmatrix}\right|
 \cr
& = 2\sin(\rho)\cos\left(\frac{\theta_{1} - \theta_{q-1}}{2}\right)
 \cr
& = 2\sin(\rho),
\end{align}

\noindent because, $G(-p, q-m, q) = G(-p, -m, q) = G(-p, m, q)$, and, in particular, $G(-p, q-1, q) = G(-p, 1, q)$, which implies $\theta_1 = \theta_{q-1}$ (see, for instance, \cite[Appendix A]{HozVega2014}). Substituting \eqref{e:Talgnorm} into \eqref{e:c0limit}, and bearing in mind \eqref{e:cosrho}, we recover \eqref{e:c_0}:
$$
c_0 = \lim_{q\to\infty}\sqrt{\frac{2\pi}{M^2q}}\frac{Mq}{8\pi}2\sin(\rho) = \lim_{q\to\infty}\sqrt{\frac{q}{8\pi}\left(1 - \left(2\cos^{4/q}\left(\frac{\pi}{M}\right) - 1\right)^2\right)} = \left[-\frac{2}{\pi}\ln\left(\cos\left(\frac{\pi}{M}\right)\right)\right]^{1/2}.
$$

\noindent From this last expression, bearing in mind \eqref{e:thetaA1} and \eqref{e:theta}, we recover \eqref{e:A1}:
$$
\exp(-(c_0^2/2)\pi) = \cos\left(\frac{\pi}{M}\right) = \cos\left(\frac{\pi-\theta}{2}\right) = \sqrt{\frac{1-\cos(\theta)}{2}} = A_1.
$$

\noindent We have also considered the evolution of $\X$ at $s = 0$. From \eqref{e:1cornerproblemsystem0}, \eqref{e:rotationM} and \eqref{e:rotatedXT}:
\begin{equation*}
\begin{pmatrix}
X_{rot,1}(0, t)
 \\
X_{rot,2}(0, t)
 \\
X_{rot,3}(0, t)
\end{pmatrix}
 =
\begin{pmatrix}
-\pi/M
 \\
-\frac{\pi/M}{\tan(\pi/M)}
 \\
0
\end{pmatrix}
+
2c_0\sqrt t
\begin{pmatrix}
\frac{A_3\sin(\pi/M)}{\sqrt{A_2^2+A_3^2}}
 \\[1em]
\frac{A_3\cos(\pi/M)}{\sqrt{A_2^2+A_3^2}}
 \\[1em]
\frac{A_2}{\sqrt{A_2^2+A_3^2}}
\end{pmatrix}.
\end{equation*}

\noindent As with $\X_{num}(0,t)$, $\X_{rot}(0, t)$ is a curve living in a plane containing the origin of $\mathbb R^3$, and parallel to the vectors $(\sin(\pi/M), \cos(\pi/M), 0)^T$ and $(0, 0, 1)$. On the left-hand side of Figure \ref{f:comparisonzh}, we plot both $\X_{num}(0,t)$ and $\X_{rot}(0, t)$ rotated clockwise $\pi/2 - \pi/M$ degrees around the $z$-axis; for small times, the movement of $\X_{num}(0, t)$ can be approximated quite well by means of a straight line with slope $A_2/A_3$. On the right-hand side of Figure \ref{f:comparisonzh}, we plot $X_{num,3}(0,t)$, $X_{rot,3}(0, t)$ as functions of $t$; for small times, $X_{num,3}(0,t)$ grows quite approximately like $(2c_0A_2/\sqrt{A_2^2+A_3^2})\sqrt t$.
\begin{figure}[!htb]
\centering
\includegraphics[width=0.49\textwidth, clip=true]{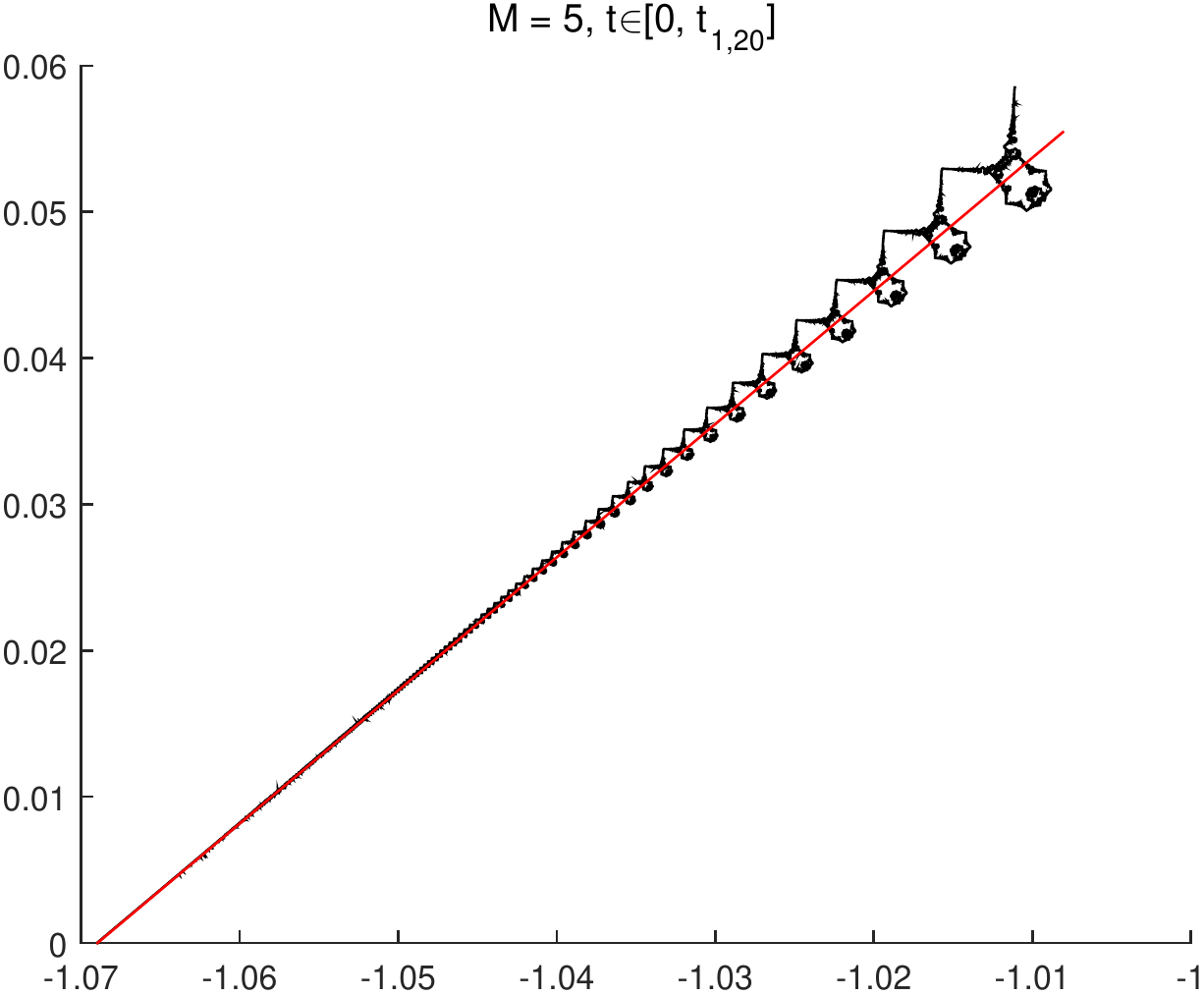}\includegraphics[width=0.49\textwidth, clip=true]{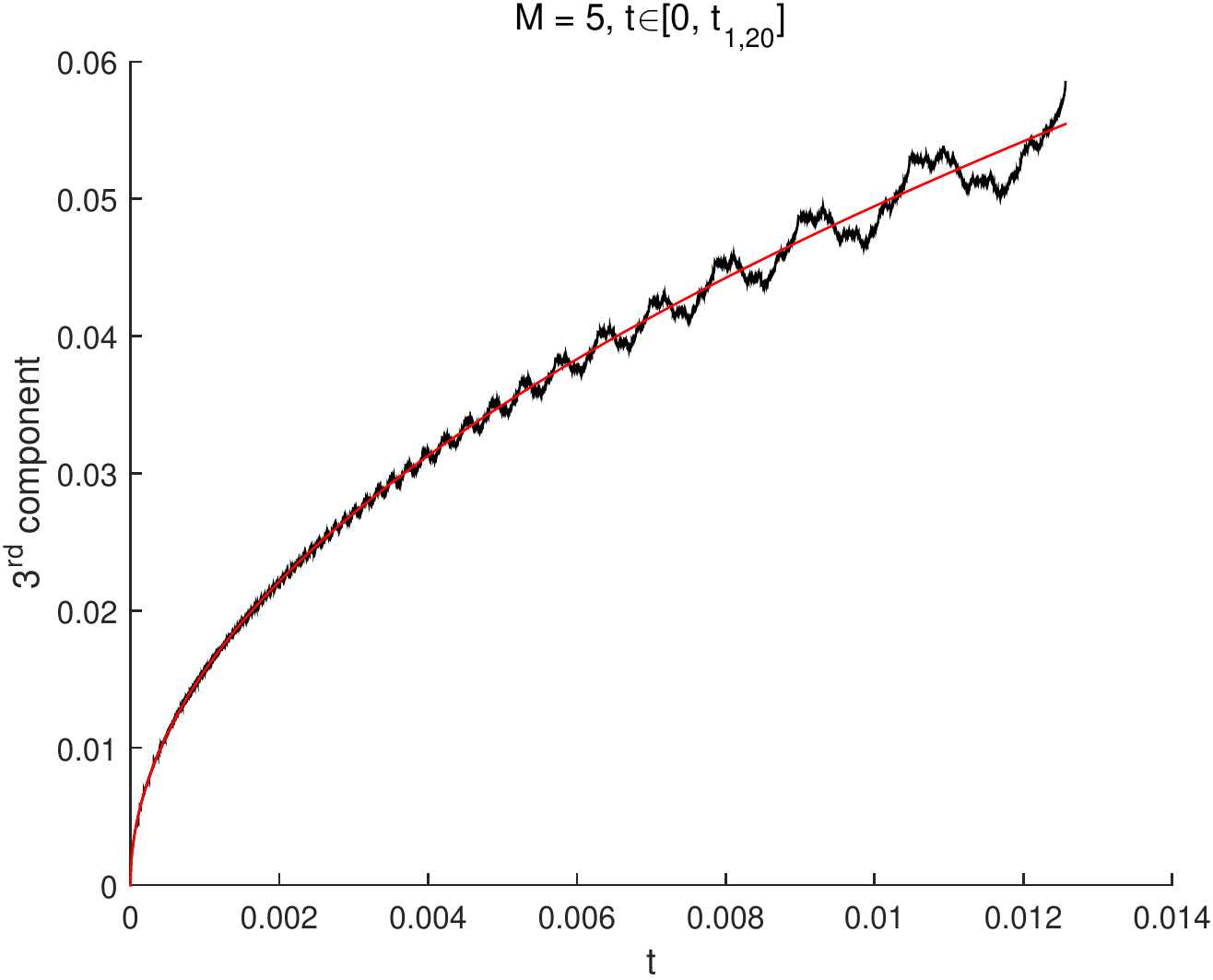}
\caption{Left: Evolution of $\X_{num}(0,t)$ (black) and $\X_{rot}(0, t)$ (red). Right: Evolution of $X_{num,3}(0,t)$ (black) and $X_{rot,3}(0, t)$ (red), as functions of $t$.}\label{f:comparisonzh}
\end{figure}

Let us finish this section by saying that, even if we have considered only the case $M = 5$, it is straightforward to check that everything holds for any $M\ge 3$. Therefore, in our opinion, there is concluding evidence that the $M$-corner problem can be indeed explained, at small times, as a superposition of $M$ one-corner problems. Moreover, assuming this fact, we will be able to compute $c_M$ in Section \ref{s:analytical}.

\section{Analytical computation of $c_M$ using the one-corner problem}

\label{s:analytical}

\subsection{Formulation of the problem}

In \cite{HozVega2014}, we gave concluding numerical evidence that the center of mass of $\XM$ moves upward with constant velocity $c_M$. Since $\XM$ is parameterized by arc-length, the center of mass is given by the mean of $\XM$ over a period:
\begin{equation}
\label{e:mean}
\mean(\XM)(t)\equiv\frac{1}{2\pi}\int_0^{2\pi}\XM(s,t)ds = (0, 0, c_Mt)^T;
\end{equation}

\noindent therefore, as in \cite{HozVega2014}, we write
\begin{equation}
\label{e:ht}
h(t) \equiv \mean(X_{M,3})(t) = c_M t.
\end{equation}

\noindent Moreover, due to the symmetries of the $M$-corner problem, it is enough to calculate the mean of $X_{M,3}$ over $s\in[-\pi/M, \pi/M]$:
\begin{equation*}
c_M t = \frac{M}{2\pi}\int_{-\pi/M}^{\pi/M} X_{M,3}(s,t)ds, \quad \forall t;
\end{equation*}

\noindent so, at any $t > 0$,
\begin{equation*}
c_M = \frac{M}{2\pi}\frac{1}{t}\int_{-\pi/M}^{\pi/M} X_{M,3}(s,t)ds,
\end{equation*}

\noindent and this formula holds in particular as $t\to0$. Now, bearing in mind the evidence given in Section \ref{s:evidence}, according to which the $M$-corner problem can at infinitesimal times be explained by the one-corner problem, we claim that
$$
c_M  = \lim_{t\to0}\frac{1}{t}\frac{M}{2\pi}\int_{-\pi/M}^{\pi/M} X_{rot,3}(s,t)ds.
$$


\noindent Hence, from \eqref{e:1selfsimilar},
$$
c_M = \lim_{t\to0}\frac{1}{\sqrt{t}}\frac{M}{2\pi}\int_{-\pi/M}^{\pi/M} X_{rot,3}(s/\sqrt t,1)ds = \lim_{t\to0}\frac{M}{2\pi}\int_{-\pi/(M\sqrt t)}^{\pi/(M\sqrt t)} X_{rot,3}(s,1)ds,
$$

\noindent which enables us to conjecture that	
\begin{equation}
\label{e:cMlimit2}
c_M = \frac{M}{2\pi}\int_{-\infty}^{+\infty} X_{rot,3}(s,1)ds.
\end{equation}

\noindent From now on, we will simply write $X_{rot}(s)$ to denote $X_{rot,3}(s,1)$. The aim of this section is to prove the following result:
\begin{teo}
\label{teo:teo}
\begin{equation}
\label{e:teo}
\int_{-\infty}^{\infty} X_{rot}(s)ds=\frac{2\pi c_0^2}{\sqrt{e^{\pi c_0^2} - 1}}.
\end{equation}
\end{teo}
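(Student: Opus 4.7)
The plan is to compute $\int_{-\infty}^{\infty} X_{rot}(s)\,ds$ by exploiting both the self-similarity of the one-corner profile and the vortex filament equation itself.

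First, observe that because the third entry of the shift vector $(-\pi/M,-(\pi/M)/\tan(\pi/M),0)^T$ in \eqref{e:rotatedXT} vanishes, one has $X_{rot,3}(s,t)=\M_3\cdot\X_{c_0}(s,t)$, where $\M_3$ is the third row of $\M$. The self-similar relation $\X_{c_0}(s,t)=\sqrt t\,\X_{c_0}(s/\sqrt t,1)$ then yields $I(t):=\int_{-\infty}^{\infty}X_{rot,3}(s,t)\,ds=t\,I_0$, where $I_0$ is the integral to compute. Differentiating in $t$ gives $I_0=dI/dt$, while the vortex filament equation $\X_t=\T\wedge\T_s=\kappa\bb$, combined with the self-similar curvature $\kappa=c_0/\sqrt t$, shows $X_{rot,3,t}(s,t)=(c_0/\sqrt t)\,b_{rot,3}(s,t)$. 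Using once more the self-similarity of $\bb$ and interchanging derivative and integral, the problem collapses to the identity $I_0=c_0\,J_0$, where $J_0:=\int_{-\infty}^{\infty}b_{rot,3}(s,1)\,ds$.

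Next I would compute $J_0$ explicitly. At $t=1$ the Frenet--Serret system in \eqref{e:1cornerproblemsystem} reads $\T_s=c_0\nn$, $\nn_s=-c_0\T+(s/2)\bb$, $\bb_s=-(s/2)\nn$, so the complex normal $\mathcal N:=\nn+i\bb$ satisfies the scalar linear ODE $\mathcal N_s+i(s/2)\mathcal N=-c_0\T$. The integrating factor $e^{is^2/4}$ gives the explicit representation
$$\mathcal N(s)=e^{-is^2/4}\mathcal N(0)-c_0\,e^{-is^2/4}\int_0^s e^{iu^2/4}\T(u)\,du.$$
Taking imaginary parts recovers $\bb_{c_0}(s,1)$, and projecting onto $\M_3$ produces $b_{rot,3}$. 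Integrating over $\mathbb R$ and swapping orders in the resulting double integral reduces $J_0$ to classical Fresnel integrals $\int_{-\infty}^{\infty}e^{\pm is^2/4}\,ds=2\sqrt\pi\,e^{\pm i\pi/4}$ against the known asymptotic limits $\T(s)\to\A^\pm=(A_1,\pm A_2,\pm A_3)^T$. After cancellations, one obtains $J_0=2\pi c_0 A_1/\sqrt{A_2^2+A_3^2}$; combined with $A_1=e^{-\pi c_0^2/2}$ and $A_1^2+A_2^2+A_3^2=1$, this rewrites as $J_0=2\pi c_0/\sqrt{e^{\pi c_0^2}-1}$, and hence $I_0=c_0 J_0$ equals the right-hand side of \eqref{e:teo}.

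The main obstacle lies entirely in the second stage: the double Fresnel integral is only conditionally convergent, with oscillatory tails of size $\T(u)/s$, so the interchange of order of integration and the pairing of the Fresnel tails against the asymptotic vectors $\A^\pm$ must be justified carefully. I expect that the explicit $\Gamma$-function expressions for $A_2,A_3$ recalled in the introduction (and due to \cite{GutierrezRivasVega2003}) will be essential to make the cancellations transparent and to identify the final combination $A_1/\sqrt{A_2^2+A_3^2}$ that produces the remarkably clean closed form of \eqref{e:teo}.
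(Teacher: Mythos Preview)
Your first stage is correct and is essentially the same reduction the paper makes: the identity $c_0\bb=\tfrac12\X-\tfrac{s}{2}\T$ (your self-similarity differentiation is exactly \eqref{e:c0b}) gives $\int X_{rot}=c_0\int b_{rot}$, which is the paper's \eqref{siete}. The paper obtains this via Fourier transform at $\xi=0$, avoiding the interchange of $d/dt$ and $\int ds$ you invoke; that interchange involves only conditionally convergent oscillatory integrals and would need separate justification.

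The second stage has a genuine gap. Your claim that the double Fresnel integral reduces to ``Fresnel integrals against the asymptotic limits $\A^\pm$'' fails for a structural reason: the projection vector $\M_3=\mathbf e_0=(0,-A_3,A_2)/\sqrt{A_2^2+A_3^2}$ is \emph{constructed} so that $\M_3\cdot\A^+=\M_3\cdot\A^-=0$ (indeed $\A^-\wedge\A^+$ is parallel to $\mathbf e_0$). Hence replacing $\T(u)$ by its limits in $\int_0^s e^{iu^2/4}\,\M_3\cdot\T(u)\,du$ contributes nothing, and the entire value of $J_0$ comes from sub-leading asymptotics of $\T$ that your sketch does not control. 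The quantity $A_1$ cannot appear by pairing $\M_3$ against $\A^\pm$; it enters only indirectly through $A_2^2+A_3^2=1-A_1^2$ together with information about the \emph{amplitude} of the oscillations of $\nn,\bb$ at infinity.

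This is exactly why the paper takes a different route. It passes to Fourier space, where $\hat Y(\eta)=\eta\hat X(\sqrt\eta)$ satisfies the Coulomb-type ODE $\hat Y''+(1-c_0^2/\eta)\hat Y=0$, constructs an explicit solution $Y(t)=(1-t^2)^{-1}|(1+t)/(1-t)|^{ic_0^2/2}$, and then matches two independent computations: (i) the value $\hat Y'(0)$ in terms of $Y(0)$ via the integral \eqref{e:J}, which produces $\tanh(\pi c_0^2/4)$; and (ii) the asymptotic amplitude $\varlimsup_{\eta\to\infty}|\hat Y_{rot}(\eta)|$, which on the one hand equals $|Y(0)|\cdot(\sqrt2/c_0)\sqrt{\tanh(\pi c_0^2/4)}$ from the explicit solution, and on the other equals $\sqrt2\,c_0$ from the physical identity $|(-n_{rot}+ib_{rot})(\infty)|=1$ (precisely because $\mathbf e_0\cdot\T(\infty)=0$). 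Eliminating $Y(0)$ between (i) and (ii) yields the result. Your Fresnel ansatz does not supply an analogue of step (ii), and without it the computation cannot close.
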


\begin{remark}

\noindent Note that, from the choice of $\bb$ in the inicial conditions given by \eqref{e:1cornerproblemsystem0}, and the definition of $X_{rot}$ in \eqref{e:rotatedXT}, we have
\begin{equation}
\label{e:>0}
\int_{-\infty}^{\infty} X_{rot}(s)ds>0.
\end{equation}

\noindent The value of $c_M$ follows immediately from \eqref{e:teo}, but in a more general form, i.e., as a function of $c_0$. In order to recover \eqref{e:cM}, we just apply \eqref{e:c_0}:
\begin{align*}
c_M  = \frac{Mc_0^2}{\sqrt{e^{\pi c_0^2} - 1}} = \frac{-2\ln(\cos(\pi/M))}{(\pi/M)\tan(\pi/M)} = \frac{\ln(1 + \tan^2(\pi/M))}{(\pi/M)\tan(\pi/M)}.
\end{align*}

\end{remark}

\subsection{Proof of Theorem \ref{teo:teo}}

Along this section, we assume that $t = 1$. Therefore, in order to simplify the notation, we write $\X(s) \equiv \X_{c_0}(s, 1)$, $\T(s) \equiv \T_{c_0}(s, 1)$, $\nn(s) \equiv \nn_{c_0}(s, 1)$, and $\bb(s) \equiv \bb_{c_0}(s, 1)$; so, $\X_{c_0}(s, t) = \sqrt t\X(s/\sqrt t)$, $\T_{c_0}(s, t) = \T(s/\sqrt t)$, $\nn_{c_0}(s, t) = \nn(s/\sqrt t)$, and $\bb_{c_0}(s, t) = \bb(s/\sqrt t)$. Moreover, we use the nonbold letters $X(s)$, $T(s)$, $n(s)$ and $b(s)$, whenever we refer to any component of their bold counterparts at $t = 1$.

Let us obtain first the ODE satisfied by $X(s)$. As in \cite{GutierrezRivasVega2003}, we have to differenciate $X$ three times:
$$
X'''(s) = T''(s) = (c_0n(s))'=c_0n'(s) = -c_0^2T(s) + \dfrac s2 c_0b(s).
$$

\noindent On the other hand, differentiating both sides of $\X_{c_0}(s, t) = \sqrt t\X(s/\sqrt t)$ with respect to $t$:
$$
\left.
\begin{array}{r}
\X_{c_0,t}(s, t) = \dfrac{c_0}{\sqrt t}\bb_{c_0}(s, t) = \dfrac{c_0}{\sqrt t}\bb(s / \sqrt t)
 \\
\left[\sqrt t\X(s/\sqrt t)\right]_t = \dfrac{1}{2\sqrt t}\X(s / \sqrt t) - \dfrac{s}{2t}\Xs(s/\sqrt t)
\end{array}
\right\}
\Longrightarrow
c_0\bb(s / \sqrt t) = \dfrac{1}{2}\X(s / \sqrt t) - \dfrac{s}{2\sqrt t}\Xs(s/\sqrt t),
$$

\noindent i.e.,
\begin{equation}
\label{e:c0b}
c_0b(s) = \frac{1}{2}X(s) - \frac{s}{2}X'(s),
\end{equation}

\noindent so we conclude that
\begin{equation}
\label{e:ode}
X'''(s) + \left(c_0^2 + \frac{s^2}{4}\right)X'(s) - \frac{s}{4}X(s) = 0.
\end{equation}

\noindent Hence, if $\hat X(\xi)$ denotes the Fourier transform of $X$, i.e.,
\begin{equation}
\label{e:fouriertransform}
\hat X(\xi) = \frac{1}{\sqrt{2\pi}}\int_{-\infty}^{+\infty}X(s)e^{-is\xi}ds \quad \Longleftrightarrow \quad X(s) = \frac{1}{\sqrt{2\pi}}\int_{-\infty}^{+\infty}\hat X(\xi)e^{is\xi}d\xi,
\end{equation}

\noindent then, $\hat X(\xi)$ satisfies
\begin{equation}
\label{e:ODEhatX}
\xi\hat X''(\xi) + 3\hat X'(\xi) + 4\xi^3\hat X(\xi) - 4c_0^2\xi\hat X(\xi) = 0.
\end{equation}

\noindent Recall that any equation formulated in terms of $\hat X$ is valid for any component of $\hat \X$, and, in particular, for $\hat X_{rot}$, which is a linear combination of the components of $\hat \X$. From now on, we will work in the Fourier side, so, from \eqref{e:fouriertransform}, proving Theorem \eqref{teo:teo} is reduced to computing
\begin{equation}
\label{e:sqrt2pihatXrot}
\sqrt{2\pi}\hat X_{rot}(0) = \int_{-\infty}^{\infty} X_{rot}(s)ds.
\end{equation}

\noindent Let us define now $\hat Y(\xi^2) = \xi^2\hat X(\xi) \Longleftrightarrow \hat Y(\eta) = \eta\hat X(\sqrt\eta)$, where $\eta = \xi^2 > 0$; then, \eqref{e:ODEhatX} becomes
\begin{equation}
\label{e:hatY}
\hat Y''(\eta) + \left(1 - \frac{c_0^2}{\eta}\right)\hat Y(\eta) = 0, \quad \eta > 0.
\end{equation}

\noindent Observe that, if $\hat Y(0)=0$, then
\begin{equation}
\label{e:hat X(0)}
\hat X(0)=\lim_{\xi\to 0}\frac{\hat Y(\xi^2)}{\xi^2}=\hat Y'(0).
\end{equation}

\noindent Besides, if $\hat Y(0)=0$, the solution is analytic. Moreover, we have the following result.

\begin{lema}
\label{basiclemma}
\begin{itemize}
\item	[(i)]
If $\hat Y(0) = 0$ , then $|\hat Y(\eta)|$ and $|\hat Y'(\eta)|$ are bounded in $\eta\in[0,\infty)$.
\item	[(ii)]
If $\hat Y(0) \neq0$,  then $|\hat Y(\eta)|$ and $|\hat Y'(\eta)|$ are bounded in $\eta\in[\epsilon,\infty)$ for all $\epsilon>0$. Moreover, for $0<\eta$ small, we have
\begin{equation*}
\hat Y'(\eta)=c_0^2\ln(\eta) + \mathcal O(\eta |\ln(\eta)|).
\end{equation*}
\end{itemize}	
\end{lema}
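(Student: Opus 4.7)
The plan is to treat the ODE \eqref{e:hatY} on $(0,\infty)$ by decoupling two regimes: the behavior near the regular singular point $\eta=0$, which determines the leading terms of any solution and the $\log$–type singularity in (ii), and the behavior for large $\eta$, where the equation is a small perturbation of $\hat Y''+\hat Y=0$ and one expects oscillatory, bounded solutions.

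Near $\eta=0$, I would run standard Frobenius theory. Seeking $\hat Y\sim\eta^{\alpha}$ in $\eta \hat Y''+(\eta-c_0^2)\hat Y=0$ gives the indicial equation $\alpha(\alpha-1)=0$, whose roots $0$ and $1$ differ by an integer. Hence there is one analytic solution $\hat Y_1(\eta)=\eta+\tfrac{c_0^2}{2}\eta^2+\mathcal O(\eta^3)$ coming from the larger root (the coefficients are fixed by plugging in and solving a straightforward two-step recursion), and a second solution of the form
$$
\hat Y_2(\eta)=c_0^2\,\hat Y_1(\eta)\ln\eta+g(\eta),\qquad g(\eta)=1+b_1\eta+b_2\eta^2+\cdots,
$$
where $g$ is analytic on a neighborhood of $0$, the coefficient $c_0^2$ in front of the logarithm is forced by cancellation of the $\eta^{-1}$ term in the equation for $g$ (with $g(0)=1$), and the remaining $b_n$ are fixed by the recursion (modulo the ambiguity $\hat Y_2\mapsto \hat Y_2+\lambda \hat Y_1$, which can be used to normalize $b_1$). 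Differentiating, $\hat Y_2(\eta)=1+\mathcal O(\eta|\ln\eta|)$ and
$$
\hat Y_2'(\eta)=c_0^2\ln\eta+\mathcal O(\eta|\ln\eta|)
$$
after the standard normalization of $g$. Any local solution is $A\hat Y_1+B\hat Y_2$, and $\hat Y(0)=B$; so $\hat Y(0)=0$ forces $B=0$ and the solution is analytic near $0$, while $\hat Y(0)\neq 0$ forces $B\neq 0$ and the logarithmic asymptotic of part (ii) follows immediately from the expansion of $\hat Y_2$.

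To control $\eta\to\infty$, I would use a modified energy adapted to the potential $1-c_0^2/\eta$:
$$
F(\eta)=\hat Y'(\eta)^2+\bigl(1-\tfrac{c_0^2}{\eta}\bigr)\hat Y(\eta)^2.
$$
A direct computation using \eqref{e:hatY} gives the crucial identity
$$
F'(\eta)=\frac{c_0^2}{\eta^2}\,\hat Y(\eta)^2.
$$
For $\eta\ge \eta_\star:=2c_0^2$ we have $1-c_0^2/\eta\ge 1/2$, hence $\hat Y(\eta)^2\le 2F(\eta)$ and therefore $F'(\eta)\le (2c_0^2/\eta^2)F(\eta)$. Gronwall's inequality then yields $F(\eta)\le F(\eta_\star)\exp\bigl(\int_{\eta_\star}^\infty 2c_0^2 s^{-2}\,ds\bigr)=F(\eta_\star)\,e$ for all $\eta\ge\eta_\star$, which gives uniform bounds on $|\hat Y|$ and $|\hat Y'|$ on $[\eta_\star,\infty)$.

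Finally I would glue the two estimates. On any compact subinterval of $(0,\infty)$ the solution and its derivative are bounded by standard ODE theory, so only the endpoints matter: the large‑$\eta$ bound comes from the energy argument, and the behavior at $0$ comes from Frobenius. In case (i), $\hat Y=A\hat Y_1$ extends analytically to $\eta=0$, so $|\hat Y|$ and $|\hat Y'|$ are continuous on $[0,\eta_\star]$ and therefore bounded there, establishing (i). In case (ii), continuity on $[\epsilon,\eta_\star]$ gives boundedness on that interval, and the Frobenius expansion yields the stated asymptotic of $\hat Y'$ as $\eta\to 0^+$. The main obstacle I anticipate is the bookkeeping in the Frobenius step, namely verifying that the coefficient of $\ln\eta$ is exactly $c_0^2$ and that the remainder really enters at order $\eta|\ln\eta|$ rather than order one; this is where the reduction-of-order computation for $\hat Y_2$ and the choice of normalization of $g$ must be carried out carefully.
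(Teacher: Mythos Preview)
Your proposal is correct, and the large-$\eta$ part is identical to the paper: the same energy $E(\eta)=\hat Y'(\eta)^2+(1-c_0^2/\eta)\hat Y(\eta)^2$, the identity $E'=c_0^2\eta^{-2}\hat Y^2$, the comparison $\hat Y^2\le 2E$ for $\eta\ge 2c_0^2$, and Gr\"onwall.

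The difference is only near $\eta=0$. For (i), the paper simply notes (just before the lemma) that $\hat Y(0)=0$ forces analyticity and then checks directly that $\lim_{\eta\to 0^+}E(\eta)=\hat Y'(0)^2$ via L'H\^opital; for (ii), it writes $\hat Y'(\eta)=\hat Y'(1)-\int_\eta^1\hat Y''(\tau)\,d\tau$ and substitutes $\hat Y''=(c_0^2/\tau-1)\hat Y$ to extract the $c_0^2\ln\eta$ term from $\int_\eta^1 c_0^2\hat Y(\tau)\tau^{-1}\,d\tau$. Your Frobenius analysis replaces both of these ad hoc steps by the single structural observation that the indicial roots are $0,1$, yielding the analytic solution $\hat Y_1$ and the logarithmic solution $\hat Y_2$ with $\log$-coefficient $c_0^2$. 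This is slightly heavier but more transparent, and it identifies the one genuine subtlety that the paper glosses over: for a general solution $\hat Y=A\hat Y_1+B\hat Y_2$ with $B=\hat Y(0)$, one gets $\hat Y'(\eta)=\hat Y(0)\,c_0^2\ln\eta+O(1)$, so the stated remainder $O(\eta|\ln\eta|)$ holds only after the normalization $\hat Y(0)=1$ and a suitable choice of $b_1$ (equivalently, of the free additive multiple of $\hat Y_1$). Your closing caveat about this bookkeeping is exactly right; the paper's integral argument has the same hidden $O(1)$ constant (namely $\hat Y'(1)$ plus the convergent part of the integral), and in later use only the leading $c_0^2\ln\eta$ matters.
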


\begin{proof}

\noindent Let us define the energy $E(\eta)$:
\begin{equation*}
E(\eta) = (\hat Y')^2(\eta) + \left(1 - \frac{c_0^2}{\eta}\right)\hat Y^2(\eta).
\end{equation*}

\noindent Differentiating it,
\begin{equation*}
E'(\eta) = 2\hat Y'(\eta)\left[\hat Y''(\eta) + \left(1 - \frac{c_0^2}{\eta}\right)\hat Y(\eta)\right] + \frac{c_0^2}{\eta^2}\hat Y^2(\eta) = \frac{c_0^2}{\eta^2}\hat Y^2(\eta).
\end{equation*}

\noindent On the other hand, if $\eta > 2c_0^2$,
\begin{equation*}
E(\eta) \ge \left(1 - \frac{c_0^2}{\eta}\right)\hat Y^2(\eta) \ge \frac12\hat Y^2(\eta),
\end{equation*}
\noindent i.e., 
$$
E'(\eta) - \frac{2c_0^2}{\eta^2}E(\eta) \le 0,
$$

\noindent and, by Gr\"onwall's lemma,
$$
0 \le E(\eta)\le C, \quad \forall \eta \ge c_0^2.
$$

\noindent Finally, when $\eta\to0^+$, we have trivially
$$
\lim_{\eta\to0^+}E(\eta) = (\hat Y')^2(0) - c_0^2\lim_{\eta\to0^+}\frac{\hat Y^2(\eta)}{\eta} = (\hat Y')^2(0) - c_0^2\lim_{\eta\to0^+}\frac{2\hat Y(\eta)\hat Y'(\eta)}{1} = (\hat Y')^2(0).
$$

\noindent Therefore, for some other positive constant $C$, $|E(\eta)|\le C$, for all $\eta\in[0,\infty)$, from which we conclude the boundedness of $|\hat Y(\eta)|$ and $|\hat Y'(\eta)|$, and
$$
E(\infty) = E_0 + \int_0^{+\infty} \frac{c_0^2}{\eta^2}\hat Y^2(\eta)\,d\eta,
$$

\noindent which gives (i). With respect to (ii), just write
$$
\hat Y'(\eta)= \hat Y'(1)-\int_\eta^1 \hat Y''(\tau)\,d\tau,
$$

\noindent and the result easily follows.

\end{proof}

\noindent The rest of the proof of Theorem \ref{teo:teo} is divided in several subsections.

\subsubsection{Asymptotics $(\eta\gg 1)$ for $\hat Y(\eta)$}

Since \eqref{e:hatY} has real coefficients, we can decompose $\hat Y(\eta)$ into its real and imaginary parts, i.e., $\hat Y(\eta) = \hat Y_{re}(\eta) + i\hat Y_{im}(\eta)$, with both $\hat Y_{re}(\eta)$ and $\hat Y_{im}(\eta)$ satisfying \eqref{e:hatY}. Let us define first $F(\eta) = \hat Y_{re}(\eta) + i\hat Y_{re}'(\eta)$; then,
$$
F'(\eta)=\hat Y_{re}'(\eta)+i\hat Y_{re}''(\eta)=\hat Y_{re}'(\eta)+i\left(\frac{c_0^2}{\eta}-1\right)\hat Y_{re}(\eta)=
-iF(\eta)+i\frac{c_0^2}{2\eta}\left(F(\eta)+\bar F(\eta)\right),
$$

\noindent i.e.,
\begin{equation}
\label{e:F'}
F'(\eta)+i\left(1 - \frac {c_0^2}{2\eta}\right)F(\eta)=i\frac{c_0^2}{2\eta}\bar F(\eta).
\end{equation}

\noindent After applying the integrating factor $e^{i\phi(\eta)}$, with $\phi(\eta)=\eta-(c_0^2/2)\ln|\eta|$, we get
$$
e^{-i\phi(\eta)}\left(e^{i\phi(\eta)}F(\eta)\right)'=i\frac{c_0^2}{2\eta}\bar F(\eta);
$$

\noindent hence, defining $G(\eta) = e^{i\phi(\eta)}F(\eta)$,
$$
G'(\eta)=i\frac{c_0^2}{2\eta}e^{2i\phi(\eta)}\bar G(\eta).
$$

\noindent We write $G(\eta)=F^\infty +g(\eta)$; then, $g(\eta)$ satisfies
$$
g'(\eta)=i\frac{c_0^2}{2\eta}\bar F^\infty e^{2i\phi(\eta)}+i\frac{c_0^2}{2\eta}e^{2i\phi(\eta)}\bar g(\eta).
$$

\noindent Integrating over $[\eta, \eta']$,
\begin{equation}
\label{e:grecursive}
g(\eta')-g(\eta) = \int_\eta^{\eta' }g'(\tau)d\tau = i\frac{c_0^2}{2}\bar F^\infty \int_{\eta}^{\eta'} e^{2i\phi(\tau)}\frac{d\tau}{\tau} + i\frac{c_0^2}{2}
\int_{\eta}^{\eta'} e^{2i\phi(\tau)}\bar g(\tau)\frac{d\tau}{\tau}.
\end{equation}

\noindent At this point, we observe that
$$
\int_{\eta}^{\infty}e^{i\alpha\phi(\tau)}\frac{d\tau}{\tau^m} = -\frac{e^{i\alpha\phi(\eta)}}{i\alpha\eta^m} + \frac{m + i\alpha c_0^2/2}{i\alpha}\int_{\eta}^{+\infty}e^{i\alpha\phi(\tau)}\frac{d\tau}{\tau^{m+1}} = \ldots = i\frac{e^{i\alpha\phi(\eta)}}{\alpha\eta^m} + \mathcal O\left(\frac{1}{\eta^{m+1}}\right);
$$

\noindent and
$$
i\frac{c_0^2}{2}\bar F^\infty \int_{\eta}^{\eta'}e^{2i\phi(\tau)}\frac{d\tau}{\tau} =\bar F^\infty \left(\frac{c_0^2} {4\eta'}e^{2i\phi(\eta')} -\frac{c_0^2} {4\eta}e^{2i\phi(\eta)}\right)+ \mathcal O\left(\frac{1}{\eta^2}\right).
$$

\noindent Inserting the last expression into \eqref{e:grecursive} and using that $g$ is bounded, we get
\begin{align*}
g(\eta')-g(\eta)  & = \frac{c_0^2}{4\eta'}\bar F^\infty e^{2i\phi(\eta')}-\frac{c_0^2}{4\eta}\bar F^\infty e^{2i\phi(\eta)} + \mathcal O\left(\frac{1}{\eta^2}\right) - i\frac{c_0^2}{2}
\int_{\eta}^{\eta' }e^{2i\phi(\tau)}\bar g(\tau)\frac{d\tau}{\tau}
	\cr
& = \left(\frac{c_0^2}{4\eta'}\bar F^\infty e^{2i\phi(\eta')}-i\frac{c_0^4}{8\eta'}F^\infty\right)-\left(\frac{c_0^2}{4\eta}\bar F^\infty e^{2i\phi(\eta)} -i\frac{c_0^4}{8\eta}F^\infty\right)
+ \mathcal O\left(\frac{1}{\eta^2}\right).
\end{align*}

\noindent Therefore, passing to the limit in $\eta'$, we get,
\begin{align}
\label{e:Feta}
F(\eta) & = e^{-i\phi(\eta)}G(\eta) = F^\infty e^{-i\phi(\eta)} + e^{-i\phi(\eta)}g(\eta)
	\cr
& = F^\infty e^{-i\phi(\eta)} + \frac{c_0^2}{4\eta}\bar F^\infty e^{i\phi(\eta)} - i\frac{c_0^4}{8\eta}F^\infty e^{-i\phi(\eta)} + \mathcal O\left(\frac{1}{\eta^2}\right).
\end{align}

\noindent Similarly, using the above expression and \eqref{e:F'}, we get that
\begin{align}
\label{e:F'eta}
(F(\eta)  - F^\infty e^{-i\phi(\eta)})'= i\frac{c_0^2}{4\eta}\bar F^\infty e^{i\phi(\eta)} -\frac{c_0^4}{8\eta}F^\infty e^{-i\phi(\eta)} + \mathcal O\left(\frac{1}{\eta^2}\right).
\end{align}

\noindent  On the other hand, if we define $F(\eta) = \hat Y_{im}(\eta) + i\hat Y_{im}'(\eta)$ and take $F^\infty = \lim_{\eta\to\infty}[e^{-i\phi(\eta)}(\hat Y_{im}(\eta) + i\hat Y_{im}'(\eta))]$, \eqref{e:Feta} is still valid. Consequently, it is also valid for $F(\eta) = \hat Y(\eta) + i\hat Y'(\eta)$, with $F^\infty = \lim_{\eta\to\infty}[e^{-i\phi(\eta)}(\hat Y(\eta) + i\hat Y'(\eta))]$.

\begin{lema}\label{lema1} Assume $F(\eta) = \hat Y(\eta) + i\hat Y'(\eta)$,  with $\hat Y(0)=0$. Define, for $\eta<0$, $F(\eta)=F(-\eta)$. Then, when $|s| \gg 1$,
\begin{align*}
\frac{1}{\sqrt{2\pi}}\int_{-\infty}^{+\infty}F(\xi^2)e^{is\xi}d\xi = \frac{1}{\sqrt{2\pi}}\int_{-\infty}^{+\infty}F^\infty e^{i(s\xi-\xi^2 + c_0^2\ln|\xi|)}d\xi+ \mathcal O\left(\frac{1}{|s|}\right).
\end{align*}
\end{lema}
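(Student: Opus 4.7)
The plan is to substitute the asymptotic \eqref{e:Feta} into $(2\pi)^{-1/2}\int F(\xi^2)e^{is\xi}d\xi$, so that the integral on the right of the lemma appears explicitly, and to reduce the claim to showing that the remainder
$$
\mathcal{E}(s) = \frac{1}{\sqrt{2\pi}}\int_{-\infty}^{+\infty} R(\xi^2)\,e^{is\xi}\,d\xi, \qquad R(\eta) := F(\eta) - F^\infty e^{-i\phi(\eta)},
$$
is $\mathcal{O}(1/|s|)$ as $|s|\to\infty$. By Lemma \ref{basiclemma}(i), $R$ is bounded on $[0,\infty)$ (and hence on $\mathbb{R}$ after the even extension), while \eqref{e:Feta} gives $R(\eta) = \mathcal{O}(1/\eta)$ and \eqref{e:F'eta} gives $R'(\eta) = \mathcal{O}(1/\eta^2)$ for $\eta\to\infty$. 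With these bounds in hand, I would split $\mathcal{E}(s) = \mathcal{E}_{in}(s) + \mathcal{E}_{out}(s)$ according to $|\xi|\le 1$ and $|\xi|>1$.

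For the outer piece $\mathcal{E}_{out}(s)$, I would insert the full expansion \eqref{e:Feta}, writing
$$
R(\xi^2) = \frac{c_0^2}{4\xi^2}\bar F^\infty e^{i\phi(\xi^2)} - i\frac{c_0^4}{8\xi^2}F^\infty e^{-i\phi(\xi^2)} + \tilde R(\xi^2),
$$
with $\tilde R(\xi^2) = \mathcal{O}(1/\xi^4)$ and $\partial_\xi\tilde R(\xi^2) = \mathcal{O}(1/|\xi|^3)$. The $\tilde R$ contribution integrates to $\mathcal{O}(1/|s|)$ after a single integration by parts against $e^{is\xi}$, because the boundary terms at $|\xi|=1$ are $\mathcal{O}(1/|s|)$ and the resulting integrand is absolutely integrable. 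Each of the two explicit terms is an oscillatory integral with combined phase $s\xi\pm(\xi^2 - c_0^2\ln|\xi|)$, whose derivative $s\pm(2\xi - c_0^2/\xi)$ has a critical point at $\xi\approx\mp s/2$; I would use the standard stationary-vs-nonstationary split into $1\le|\xi|\le|s|/10$ (where one integration by parts against the phase gains $1/|s|$, and the $1/\xi^2$ amplitude is integrable) and $|\xi|>|s|/10$ (where $1/\xi^2 \le 100/s^2$ and the trivial bound already yields $\mathcal{O}(1/|s|)$), to obtain $\mathcal{E}_{out}(s) = \mathcal{O}(1/|s|)$.

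For the inner piece $\mathcal{E}_{in}(s)$ I would treat the two summands of $R(\xi^2)$ separately. Since $\hat Y(0)=0$, the remark following \eqref{e:hat X(0)} makes $\hat Y$ analytic near $0$, so $F(\xi^2)$ is smooth on $[-1,1]$ and one integration by parts immediately gives $\int_{|\xi|\le1}F(\xi^2)e^{is\xi}d\xi = \mathcal{O}(1/|s|)$. The delicate term is
$$
F^\infty\int_{|\xi|\le 1}|\xi|^{ic_0^2}\,e^{-i\xi^2}\,e^{is\xi}\,d\xi,
$$
where $|\xi|^{ic_0^2}$ is bounded but fails to be $C^1$ at $\xi=0$, so a naive integration by parts produces a nonintegrable $1/|\xi|$ contribution. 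To handle it I would invoke the Mellin–Fourier identity $\int_0^{\infty}\xi^{ic_0^2}e^{is\xi}d\xi = \Gamma(1+ic_0^2)(is)^{-1-ic_0^2}$ (interpreted as an oscillatory integral), truncate it at $|\xi|=1$ (the tail being already controlled by the outer analysis), and absorb the factor $e^{-i\xi^2}$ by Taylor expansion around $0$, obtaining decay of order $|s|^{-1}$. I expect this inner piece to be the main obstacle, since it is precisely where the slow logarithmic oscillation in $|\xi|^{ic_0^2}$ competes with the fast oscillation of $e^{is\xi}$; the resolution requires an explicit computation via the Gamma function (or an equivalent contour deformation), and it has to be checked carefully that no hidden $|s|^{-1}\ln|s|$ correction is missed when matching with the stationary-phase contribution coming from $\xi\approx s/2$.
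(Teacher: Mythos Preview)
Your proposal is correct and follows the paper's route exactly: a smooth cutoff separating $|\xi|\lesssim 1$ from $|\xi|\gtrsim 1$, the asymptotics \eqref{e:Feta}--\eqref{e:F'eta} to peel off the leading term $F^\infty e^{-i\phi(\xi^2)}$ away from the origin, and repeated integration by parts for the remainders (the paper's proof is terser but invokes the same three ingredients). Two small remarks: \eqref{e:F'eta} only gives $R'(\eta)=\mathcal O(1/\eta)$, not $\mathcal O(1/\eta^2)$, but this is harmless since you immediately pass to $\tilde R$, where the sharper bound does hold; and the feared $|s|^{-1}\ln|s|$ correction never materializes, because the scaling $u=s\xi$ turns $\int_{|\xi|\le 1}|\xi|^{ic_0^2}e^{is\xi}\,d\xi$ into $s^{-1-ic_0^2}$ times a convergent Gamma-type integral plus a tail already controlled by one integration by parts.
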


\begin{proof}

Note that, if $\varphi$ is a smooth cut-off function outside the origin (i.e., $\varphi=0$, in $|\xi|<1$, and $\varphi=1$, in $|\xi|>2$), then
$$
\int_{-\infty}^{+\infty}\varphi(\xi)F(\xi^2)e^{is\xi}d\xi = \mathcal O\left(\frac{1}{|s|}\right).
$$

\noindent Hence, the lemma follows from \eqref{e:Feta} and \eqref{e:F'eta}, and repeated integration by parts (see also \eqref{e:OI} below).
\end{proof}

\subsubsection{Equation for $Y$}

Let us write
\begin{equation}
\label{e:hatYeta}
\hat Y(\eta) = \frac{1}{\sqrt{2\pi}}\int_{-\infty}^{+\infty}Y(t)e^{-it\eta}dt, \quad \eta \ge 0,
\end{equation}

\noindent for some $Y(t)$. From \eqref{e:hatY}, we take $Y(t)$ such that
$$
Y'(t) = \frac{2t + ic_0^2}{1 - t^2}Y(t);
$$

\noindent hence, if
\begin{equation}
\label{e:Yu}
Y(t) = \frac{Y(0)}{1 - t^2}\left|\frac{1 + t}{1 - t}\right|^{ic_0^2/2}, \quad t\not=\pm1,
\end{equation}

\noindent then, $\hat Y$ solves \eqref{e:hatY}. Eventually, we will assume $\hat Y(\eta)\in \mathbb R$; therefore, we will take $Y(0)\in \mathbb R$.

\begin{lema} \label{lema2} Let $Y(t)$ be as in \eqref{e:Yu}; then,
$$
\int_0^{+\infty}Y(t)dt = 0 \quad \mbox{and} \quad \int_{-\infty}^0Y(t)dt = 0,
$$
and, therefore, $\hat Y(0)=0$.
\end{lema}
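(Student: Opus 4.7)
The approach is to exploit the fact that the oscillatory phase $\ln|(1+t)/(1-t)|$ in $Y(t)$ is, up to a constant, an antiderivative of the singular amplitude $1/(1-t^2)$. Setting $u = \ln|(1+t)/(1-t)|$ one has $du = (2/(1-t^2))\,dt$, so formally $Y(t)\,dt = (Y(0)/2)\,e^{i(c_0^2/2)u}\,du$. Since $|Y(t)| \sim |Y(0)|/(2|1-t|)$ as $t\to 1$, the integral is not absolutely convergent and must be read as a principal value around $t=1$ together with a symmetric cut-off at infinity,
\begin{equation*}
\int_0^{+\infty} Y(t)\,dt = \lim_{\epsilon\to 0^+,\,R\to\infty}\Bigl(\int_0^{1-\epsilon} + \int_{1+\epsilon}^{R}\Bigr) Y(t)\,dt;
\end{equation*}
this is the prescription under which $Y$ represents $\hat Y$ via \eqref{e:hatYeta} in the distributional sense.

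The map $u(t)$ sends $(0,1)$ bijectively and monotonically increasing onto $(0,+\infty)$, and sends $(1,+\infty)$ bijectively and monotonically decreasing onto $(0,+\infty)$. Writing $U(\epsilon)=\ln((2-\epsilon)/\epsilon)$, $V(\epsilon)=\ln((2+\epsilon)/\epsilon)$ and $W(R)=\ln((R+1)/(R-1))$, the substitution gives
\begin{equation*}
\int_0^{1-\epsilon}Y(t)\,dt = \frac{Y(0)}{2}\int_0^{U(\epsilon)}e^{i(c_0^2/2)u}\,du, \qquad \int_{1+\epsilon}^{R}Y(t)\,dt = -\frac{Y(0)}{2}\int_{W(R)}^{V(\epsilon)}e^{i(c_0^2/2)u}\,du,
\end{equation*}
the minus sign coming from $u$ decreasing along $(1,+\infty)$. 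Since $W(R)<U(\epsilon)<V(\epsilon)$ for $\epsilon$ small and $R$ large, splitting the intervals in $u$ makes the overlapping range $[W(R),U(\epsilon)]$ cancel and leaves
\begin{equation*}
\int_0^{1-\epsilon}Y(t)\,dt + \int_{1+\epsilon}^{R}Y(t)\,dt = \frac{Y(0)}{2}\Bigl[\int_0^{W(R)}e^{i(c_0^2/2)u}\,du - \int_{U(\epsilon)}^{V(\epsilon)}e^{i(c_0^2/2)u}\,du\Bigr].
\end{equation*}
Both remainders vanish in the limit: $W(R)\to 0$ as $R\to\infty$, while $V(\epsilon)-U(\epsilon)=\ln((2+\epsilon)/(2-\epsilon))\to 0$ as $\epsilon\to 0^+$ and the integrand has modulus one. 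This gives $\int_0^{+\infty}Y(t)\,dt=0$.

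For the other half, since $Y(0)$ is real a direct check from the definition shows $Y(-t)=\overline{Y(t)}$, so $\int_{-\infty}^{0}Y(t)\,dt = \int_0^{+\infty}Y(-s)\,ds = \overline{\int_0^{+\infty}Y(s)\,ds}=0$. Inserting $\eta=0$ in \eqref{e:hatYeta} then yields $\hat Y(0)=0$. I expect the main subtlety to lie not in the substitution itself, which is elementary once noticed, but in pinning down the correct interpretation of the non-absolutely-convergent integral around $t=\pm 1$ and at infinity; once the principal-value-plus-symmetric-truncation prescription is accepted (as it must be if $Y$ is to serve as the Fourier preimage of the bounded function $\hat Y$), the cancellation is essentially mechanical.
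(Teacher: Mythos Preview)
Your proof is correct and is essentially the paper's argument written out with more care. The paper applies the M\"obius change of variable $v=(1+t)/(1-t)$ to obtain
\[
\int_0^{+\infty}Y(t)\,dt \;=\; Y(0)\left(\int_{-\infty}^{-1}+\int_{1}^{\infty}\right)\frac{1}{2v}\,|v|^{ic_0^2/2}\,dv,
\]
and then relies implicitly on the oddness of the integrand in $v$; your substitution $u=\ln|(1+t)/(1-t)|=\ln|v|$ is the same map composed with a logarithm, and your explicit principal-value bookkeeping (tracking $U(\epsilon),V(\epsilon),W(R)$ and exhibiting the cancellation) supplies exactly the rigor the paper's version leaves to the reader. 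The symmetry $Y(-t)=\overline{Y(t)}$ you use for the negative half is a clean shortcut over repeating the computation.
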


\begin{proof}

Writing
$$
\int_0^{+\infty}Y(t)dt=\int_0^{1^-}Y(t)dt+\int_{1^+}^{+\infty}Y(t)dt,
$$

\noindent and applying the change of variable
$$
v=\frac{1+t}{1-t},\qquad
t=\frac{v-1}{v+1},\qquad
dt = \frac{2dv}{(v + 1)^2},
$$

\noindent we have
\begin{equation*}
\int_0^{+\infty}Y(t)dt = Y(0)\left(\int_{-\infty}^{-1} + \int_{1}^{\infty}\right)\frac{1}{2v}|v|^{ic_0^2/2}dv.
\end{equation*}

\noindent The proof is identical for $\int_{-\infty}^0Y(t)dt$.

\end{proof}

\subsubsection{$\hat Y'(0)$ in terms of $Y(0)$}

Recall \eqref{e:hat X(0)}:
$$
\hat Y'(0)=\lim_{\xi\to 0}\hat Y(\xi^2)=-Y(0)\frac{1}{\sqrt{2\pi}}\lim\limits_{\eta\to 0} J(\eta),
$$ 

\noindent with $\eta>0$, and
\begin{align*}
J(\eta) & = i\int_{-\infty}^{+\infty}\frac t{(t^2-1)}\left|\frac{t+1}{t-1}\right|^{i\frac{c_0^2}2}e^{-it\eta}\,dt
    \cr
& = \frac i2\int_{-\infty}^{+\infty}\frac 1{t+1}\left|\frac{t+1}{t-1}\right|^{i\frac{c_0^2}2}e^{-it\eta}\,dt + \frac i2\int_{-\infty}^{+\infty}\frac 1{t-1}\left|\frac{t+1}{t-1}\right|^{i\frac{c_0^2}2}e^{-it\eta}\,dt
    \cr
& = \frac i2 e^{i\eta}\int_{-\infty}^{+\infty}\frac 1t e^{-it\eta}\left|\frac{t}{t-2}\right|^{i\frac{c_0^2}2}\,dt + \frac i2 e^{-i\eta}\int_{-\infty}^{+\infty}\frac 1t e^{-it\eta}\left|\frac{t+2}{t}\right|^{i\frac{c_0^2}2}\,dt.
	\cr
& = J_1(\eta)+J_2(\eta).
\end{align*}

\noindent We need the following two lemmas.

\begin{lema} \label{lemma1} Take $\eta>0$, then
$$
\lim_{\eta\to 0}\int_{-\infty}^{+\infty}\frac{\sin(t\,\eta)}{t}\left|\frac{t}{t-2}\right|^{i\frac{c_0^2}2}dt = \lim_{\eta\to 0}\int_{-\infty}^{+\infty}\frac{\sin(t\,\eta)}{t}\left|\frac{t+2}{t}\right|^{i\frac{c_0^2}2}dt = 2\int_{-\infty}^{\infty}\frac{\sin(t)}{t} dt=2\pi.
$$

\end{lema}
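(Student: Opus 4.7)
My approach is a rescaling argument followed by a passage to the limit that is nontrivial because $\sin u/u$ is only conditionally integrable. In each integral I would substitute $u=t\eta$ (valid for $\eta>0$). The first integral becomes
$$\int_{-\infty}^{+\infty}\frac{\sin u}{u}\,g_\eta(u)\,du,\qquad g_\eta(u)=\left|\frac{u}{u-2\eta}\right|^{ic_0^2/2},$$
and the second the analogous expression with $g_\eta$ replaced by $h_\eta(u)=\left|(u+2\eta)/u\right|^{ic_0^2/2}$. Both factors have modulus identically $1$, and for each fixed $u\neq 0$ they converge pointwise to $1$ as $\eta\to 0^+$ (for $u>0$ eventually $\eta<u/2$ and the quotient $u/(u-2\eta)$ tends to $1$; the case $u<0$ is handled by the same sign chase).

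To pass the limit under the integral, I would split $\mathbb R=\{|u|<\delta\}\cup\{\delta\leq|u|\leq R\}\cup\{|u|>R\}$. On the middle annulus, dominated convergence applies with majorant $\mathbf 1_{\{\delta\leq|u|\leq R\}}|\sin u/u|$, yielding convergence to $\int_{\delta\leq|u|\leq R}(\sin u/u)\,du$. The inner region contributes at most $2\delta$ uniformly in $\eta$, since the integrand has modulus at most $|\sin u/u|\leq 1$. For the tail I would integrate by parts once, using
$$g_\eta'(u)=-\frac{ic_0^2\eta}{u(u-2\eta)}\,g_\eta(u),$$
so that $|g_\eta'(u)|\leq 2c_0^2\eta/u^2$ whenever $|u|>4\eta$; writing $\sin u=-(\cos u)'$ produces a boundary term of size $1/R$ and an interior integral of $\cos u$ against an $O(1/u^2)$ factor, altogether bounded by $C/R$ uniformly in small $\eta$. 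Sending first $\eta\to 0^+$, then $R\to\infty$ and $\delta\to 0$, the limit reduces to the Dirichlet integral $\int_{-\infty}^{+\infty}(\sin u/u)\,du$, as claimed.

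The main obstacle is the uniform-in-$\eta$ tail estimate: the phase of $g_\eta$ oscillates infinitely rapidly as $u\to 2\eta$, but away from this point its derivative decays like $\eta/u^2$, which is exactly what the integration by parts requires. The second integral is treated identically, with the phase singularity of the modulating factor located at $u=-2\eta$ rather than $u=2\eta$.
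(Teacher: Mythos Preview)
Your proof is correct and follows essentially the same route as the paper: rescale by $u=t\eta$, then split the line into a compact piece (handled by dominated convergence, since the phase factor has unit modulus and $|\sin u/u|\le 1$) and a tail $|u|>R$ (handled by one integration by parts, uniformly in $\eta$). The only cosmetic difference is that you additionally carve out a $\delta$-neighborhood of the origin, which is harmless but not needed---dominated convergence already works on all of $\{|u|\le R\}$ because $g_\eta(u)\to 1$ pointwise for every fixed $u\ne 0$ and $|g_\eta|\equiv 1$.
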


\begin{proof}

Take $R\gg1$, then
\begin{align*}
\int_{-\infty}^{+\infty}\frac{\sin(t\,\eta)}{t}\left|\frac{t}{t-2}\right|^{i\frac{c_0^2}2} & = \int_{-\infty}^{+\infty}\frac{\sin(t)}{t}\left|\frac 1{1-2\eta/t}\right|^{i\frac{c_0^2}2}dt
	\cr
& = \int_{|t|\le R}\frac{\sin(t)}{t}\left|\frac 1{1-2\eta/t}\right|^{i\frac{c_0^2}2}dt + \int_{|t| > R}\frac{\sin(t)}{t}\left|\frac 1{1-2\eta/t}\right|^{i\frac{c_0^2}2}dt.
\end{align*}

\noindent In the second integral, we can integrate by parts to obtain a bound $C/M$, with $C$ uniform in $\eta$. In the first integral, we can pass to the limit in $\eta$, by the dominated convergence theorem. The result easily follows.

\end{proof}

\begin{lema} \label{lemma2}
Define
\begin{equation}
\label{e:J}
\tilde{J}=\int_{-\infty}^{\infty}\frac{t}{t^2-1}\left|\frac{t+1}{t-1}\right|^{i\frac{c_0^2}2}dt.
\end{equation}

\noindent Then
$$
\lim_{\eta\to 0}\int_{-\infty}^{\infty}\frac{\cos(t\,\eta)}{t}\left|\frac{t}{t-2}\right|^{i\frac{c_0^2}2}dt+
\lim_{\eta\to 0}\int_{-\infty}^{\infty}\frac{\cos(t\,\eta)}{t}\left|\frac{t+2}{t}\right|^{i\frac{c_0^2}2}dt
=\tilde{J},
$$
and
$$
\tilde{J}=
i\pi\tanh(c_0^2\pi/4).
$$
\end{lema}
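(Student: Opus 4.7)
The plan is to prove the lemma in two stages: first relate the sum of the two cosine-regularized integrals to $\tilde J$ by partial fractions together with a linear change of variable, then evaluate $\tilde J$ in closed form via a hyperbolic substitution followed by contour integration. For the first stage, I start from $\tilde J$ itself and decompose $\frac{t}{t^2-1}=\frac{1}{2(t-1)}+\frac{1}{2(t+1)}$. The changes of variable $u=t-1$ in the first piece and $u=t+1$ in the second shift the singularities at $t=\pm 1$ to the origin and rewrite $\tilde J$ as a combination of two principal-value integrals of the form $\PV\int\frac{1}{u}\varphi(u)\,du$, where $\varphi(u)$ is exactly $\left|\frac{u+2}{u}\right|^{ic_0^2/2}$ or $\left|\frac{u}{u-2}\right|^{ic_0^2/2}$, the factors appearing in the lemma. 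To identify the $\eta\to 0$ limit of the cosine integral with the PV, I would extract the odd part $\varphi_o$ of $\varphi$ (only it contributes, since $\cos(u\eta)/u$ is odd), check that $\varphi_o(u)=\mathcal O(u)$ near $u=0$ and $\varphi_o(u)=\mathcal O(1/u)$ at infinity by direct Taylor expansion, and then apply dominated convergence to the now-integrable symmetrized integrand $\cos(u\eta)\varphi_o(u)/u$.

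For the evaluation of $\tilde J$, I would straighten out the logarithm $\ln|(t+1)/(t-1)|$ in the phase by substituting $t=\coth(v/2)$ on $|t|>1$ and $t=\tanh(v/2)$ on $|t|<1$. A direct computation using $dt/dv=-(t^2-1)/2$ collapses the three pieces of $\tilde J$ into
\[
\tilde J=\frac{1}{2}\,\PV\int_{-\infty}^{\infty}\bigl(\coth(v/2)-\tanh(v/2)\bigr)e^{iav}\,dv,\qquad a=\tfrac{c_0^2}{2},
\]
and the identity $\coth(v/2)-\tanh(v/2)=2/\sinh v$ reduces the task to the classical PV integral $\PV\int_{-\infty}^{\infty}e^{iav}/\sinh v\,dv$. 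I would compute this by closing on the rectangle with vertices $\pm R$ and $\pm R+i\pi$, indented upward at $v=0$ and downward at $v=i\pi$ so that both poles lie outside the contour. The vertical sides are $\mathcal O(e^{-R})$ and disappear in the limit $R\to\infty$. On the top side, using $\sinh(u+i\pi)=-\sinh u$ and $e^{ia(u+i\pi)}=e^{-a\pi}e^{iau}$, the line contributions reproduce $e^{-a\pi}$ times the original PV; the small semicircles contribute $-i\pi$ at the bottom and $+i\pi e^{-a\pi}$ at the top. Since the contour encloses no poles, the total vanishes, giving $(1+e^{-a\pi})\tilde J=i\pi(1-e^{-a\pi})$, i.e.\ $\tilde J=i\pi\tanh(a\pi/2)=i\pi\tanh(c_0^2\pi/4)$.

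The step that I expect to demand the most care is the first. Near $u=0$ the factor $|u|^{\pm ic_0^2/2}$ oscillates without a limit, so the reduction of each cosine integral to its odd part and the estimates on that odd part near $0$ and at infinity must be performed cleanly in order to justify the passage to the limit inside the integral. Once the reduction to $1/\sinh v$ is in hand, the contour-integration step is essentially bookkeeping, though one must be careful about the direction of each indentation to ensure both poles are indeed excluded from the enclosed region.
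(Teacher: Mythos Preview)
Your plan is correct and, in places, more economical than the paper's. For the first stage, the paper also performs the partial-fraction/shift you describe, but then justifies the $\eta\to0$ limit differently: it introduces a smooth cutoff around $u=0$, integrates by parts on the near part to tame the oscillatory factor $|u|^{\pm ic_0^2/2}$, and uses the odd-part symmetrization only on the far part. Your observation that the full odd part $\varphi_o$ already satisfies $|\varphi_o(u)|=\mathcal O(|u|)$ near $0$ and $\mathcal O(1/|u|)$ at infinity makes the cutoff and the integration by parts unnecessary and yields a cleaner single-step dominated-convergence argument. For the second stage, the paper reaches the same $1/\sinh$ integral via the M\"obius substitution $v=(1+t)/(1-t)$ followed by $v=e^w$, and then simply quotes its value; your $\coth/\tanh$ substitution together with the rectangle contour gives an equivalent, self-contained derivation. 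One bookkeeping caveat: carried out carefully, your partial-fraction step produces $\tilde J=\tfrac12(\PV_A+\PV_B)$, so the sum of the two cosine limits comes out as $2\tilde J$ rather than $\tilde J$. This extra factor of $2$ is precisely what the subsequent identity $\lim_{\eta\to0}J(\eta)=\pi\bigl(1-\tanh(c_0^2\pi/4)\bigr)$ needs (each $J_k$ carries a prefactor $i/2$), so the discrepancy lives only in the intermediate statement and does not affect the final result.
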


\noindent As a consequence, we get
\begin{equation}
\label{e:J(0)}
\lim_{\eta\to 0}J(\eta)=\pi(1-\tanh(c_0^2\pi/4)),
\end{equation}
and
\begin{equation}
\label{e:hatY'(0)}
\hat Y'(0)=-Y(0)\frac{1}{\sqrt{2\pi}}\lim_{\eta\to 0}J(\eta)=-Y(0)\frac{\sqrt{2\pi}}{2}(1-\tanh(c_0^2\pi/4)).
\end{equation}

\begin{proof}

Take
$$
I(\eta)=\int_{-\infty}^{+\infty}\frac{\cos(t\,\eta)}{t}\left|\frac{t+2}{t}\right|^{i\frac{c_0^2}2}dt.
$$

\noindent Consider $\varphi$ an even cut-off function, such that $\text{supp}\,\varphi\subset\left\{|t|\le 1\right\}$, and $\varphi(t)\equiv 1$ ,if $|t|\le 1/2$. Define 
$$
I_1(\eta) = \int_{-\infty}^{+\infty}\frac{\cos(t\,\eta)}{t}\varphi(t)\left|\frac{t+2}{t}\right|^{i\frac{c_0^2}2}dt,\quad\text{and}\quad
I_2(\eta) = I(\eta)-I_1(\eta).
$$

\noindent On the one hand, 
\begin{align*}
I_1(\eta)& =\frac{2}{ic_0^2}\int_{-\infty}^{+\infty}\cos(t\,\eta)\varphi(t)|t+2|^{i\frac{c_0^2}2}\frac{d}{dt}e^{i\frac{c_0^2}2\ln |t|}dt
    \cr
&=-\frac{2}{ic_0^2}\int_{-\infty}^{+\infty}\frac{d}{dt}\left\{\cos (t\,\eta)\varphi(t)|t+2|^{i\frac{c_0^2}2}\right\}e^{i\frac{c_0^2}2\ln |t|}dt.
\end{align*}

\noindent The last integral is absolutely convergent and dominated by $C\chi_{|t|\le 1}$ for some constant $C$. Therefore,
\begin{align*}
\lim_{\eta\to 0}I_1(\eta)& =-\frac{2}{ic_0^2}\int_{-\infty}^{+\infty}\frac{d}{dt}\left\{\varphi(t)|t+2|^{i\frac{c_0^2}2}\right\}e^{i\frac{c_0^2}2\ln |t|}dt
    \cr
&=\frac{2}{ic_0^2}\int_{-\infty}^{+\infty}\varphi(t)|t+2|^{i\frac{c_0^2}2}\frac{d}{dt}e^{i\frac{c_0^2}2\ln |t|}.
\end{align*}

\noindent On the other hand,
$$
I_2(\eta)=\int_0^{\infty}\frac{\cos(\eta\,t)}{t}(1-\varphi(t)) f(t)dt,
$$

\noindent with 
$$
f(t)=|t|^{i\frac{c_0^2}2}\left(|t+2|^{i\frac{c_0^2}2}-|t-2|^{i\frac{c_0^2}2}\right),
$$ 

\noindent so that
$$
|f(t)|\le\frac{2c_0^2}{t}.
$$

\noindent Hence,
$$
\lim_{\eta\to 0}I_2(\eta)=\int_0^{\infty}(1-\varphi(t))f(t)\frac{dt}t,
$$

\noindent and
\begin{align*}
\lim_{\eta\to 0}I(\eta) =\frac{2}{ic_0^2}\int_{-\infty}^{+\infty} |t+2|^{i\frac{c_0^2}2} \frac{d}{dt} |t|^{i\frac{c_0^2}2} =\frac{2}{ic_0^2}\int_{-\infty}^{+\infty} |t+1|^{i\frac{c_0^2}2} \frac{d}{dt} |t-1|^{i\frac{c_0^2}2}.
\end{align*}

\noindent We can proceed in an analogous way and prove that
$$
\lim_{\eta\to 0}
\int_{-\infty}^{+\infty}\frac{\cos(t\,\eta)}{t}\left|\frac{t}{t-2}\right|^{i\frac{c_0^2}2}dt=
-\frac{2}{ic_0^2}\int_{-\infty}^{+\infty}|t-1|^{i\frac{c_0^2}2} \frac{d}{dt} |t+1|^{i\frac{c_0^2}2}.
$$

\noindent Therefore, the sum of the two integrals gives
$$
\int_{-\infty}^{\infty}\frac{t}{t^2-1}\left|\frac{t+1}{t-1}\right|^{i\frac{c_0^2}2}dt=\tilde{J}.
$$

\noindent Let us compute $\tilde{J}$:
\begin{align*}
\tilde{J} = 2i\Im\int_0^{+\infty}\frac{t}{t^2 - 1}\left|\frac{1 + t}{1 - t}\right|^{ic_0^2/2}dt = 4i\Im\int_1^{+\infty}\frac{1}{v^2 - 1}|v|^{ic_0^2/2}dv,
\end{align*}

\noindent where, in the last step, we have used the same change of variable as in Lemma \ref{lema2}. Finally, applying the change $v = e^w$,
\begin{equation}
\label{nueve}
\tilde{J} = 4i\int_0^{+\infty}\frac{\sin(c_0^2w/2)}{e^w - e^{-w}}dw = i\pi\tanh(c_0^2\pi/4).
\end{equation}

\end{proof}

\subsubsection{Asymptotics of $\hat Y(\eta)$ in terms of $Y(0)$}

Coming back to \eqref{e:Yu}, we observe that $Y(t)$ is regular if $t^2\not=1$, so we can decompose $\hat Y(\eta)$ in \eqref{e:hatYeta} as
$$
\hat Y(\eta) = \hat Y_0(\eta) + \hat Y_1(\eta),
$$

\noindent where $\hat Y_1(\eta)$ is given by the contribution coming from the region $|t^2-1|>1/4$ after introducing a smooth cut-off function $\varphi$, such that $\varphi(t) = 1$, in $|t^2 - 1| < 1 / 4$; and $\varphi(t) = 0$, in $|t^2 - 1| > 1 / 2$, i.e.,
\begin{align*}
\hat Y_0(\eta) & = \frac{1}{\sqrt{2\pi}}\int_{-\infty}^{+\infty}\varphi(t)Y(t)e^{-it\eta}dt,
	\cr
\hat Y_1(\eta) & = \frac{1}{\sqrt{2\pi}}\int_{-\infty}^{+\infty}[1 - \varphi(t)]Y(t)e^{-it\eta}dt.
\end{align*}

\noindent $\hat Y_1(\eta)$ poses no problems; indeed, it is immediate to check that $\hat Y_1(\eta) =  \mathcal O(1/\eta)$, when $\eta\gg1$. With respect to $\hat Y_0(\eta)$, we further decompose it into the sum of three integrals:
\begin{align*}
\hat Y_0(\eta) & = \frac{1}{\sqrt{2\pi}}\int_{|t-1|<1}\varphi(t)Y(t)e^{-it\eta}dt + \frac{1}{\sqrt{2\pi}}\int_{|t+1|<1}\varphi(t)Y(t)e^{-it\eta}dt + \frac{1}{\sqrt{2\pi}}\int_{|t|>2}\varphi(t)Y(t)e^{-it\eta}dt
	\cr
& = I_1(\eta) + I_2(\eta) + 0.	
\end{align*}

\noindent Let us obtain first the asymptotics of $I_1(\eta)$:
\begin{align*}
I_1(\eta) & = \frac{Y(0)}{\sqrt{2\pi}}\int_{|t-1|<1}\frac{\varphi(t)}{1 - t^2}\left|\frac{1 + t}{1 - t}\right|^{ic_0^2/2}e^{-it\eta}dt
	\cr
& = -\frac{Y(0)e^{-i\eta}}{\sqrt{2\pi}}\int_{|t|<1}\frac{\varphi(t + 1)}{t(t + 2)}\left|\frac{t + 2}{t}\right|^{ic_0^2/2}e^{-it\eta}dt
	\\
& = -\frac{Y(0)e^{-i\eta}}{\sqrt{2\pi}}\frac{2^{ic_0^2/2}}{2}\int_{|t|<1}\tilde\varphi(t)\frac{1}{t}|t|^{-ic_0^2/2}e^{-it\eta}dt,
\end{align*}

\noindent where $\tilde\varphi$ is a new smooth cut-off function, such that $\tilde\varphi(0) = 1$. Now, assuming $\eta \gg 1$ and writing 
$\eta\, t = \tilde t$:
\begin{align*}
I_1(\eta) & = -\frac{Y(0)e^{-i\eta}}{\sqrt{2\pi}}\frac{2^{ic_0^2/2}}{2}|\eta|^{ic_0^2/2}\int_{|\tilde t|<\eta}\tilde\varphi(\tilde t/\eta)\frac{1}{\tilde t}|\tilde t|^{-ic_0^2/2}e^{-i\tilde t}d\tilde t
	\cr
& = -\frac{Y(0)e^{-i\eta}}{\sqrt{2\pi}}\frac{|2\eta|^{ic_0^2/2}}{2}\int_{-\infty}^{+\infty}\frac{1}{t}|t|^{-ic_0^2/2}e^{-it}dt + \mathcal O\left(\frac{1}{\eta}\right).
\end{align*}

\noindent The asymptotics of $I_2(\eta)$ are obtained exactly in the same way:
\begin{equation*}
I_2(\eta) = \frac{Y(0)e^{i\eta}}{\sqrt{2\pi}}\frac{|2\eta|^{-ic_0^2/2}}{2}\int_{|t|<1}\frac{1}{t}|t|^{ic_0^2/2}e^{-it}dt = \bar I_1(\eta),
\end{equation*}

\noindent where we are using that $Y(0)$ is real. Putting $I_1(\eta)$ and $I_2(\eta)$ together,
\begin{align*}
\hat Y(\eta) & = I_1(\eta) +\bar I_1(\eta)
	\cr
& = -\frac{Y(0)}{\sqrt{2\pi}}\Re\left[e^{-i\eta + i(c_0^2/2)\ln|2\eta|}\int_{-\infty}^{+\infty}\frac{1}{t}|t|^{-ic_0^2/2}e^{-it}dt\right] + \mathcal O\left(\frac{1}{\eta}\right).
\end{align*}

\noindent The integral can be immediately computed by means of Mathematica\textsuperscript{\textregistered}:
\begin{align*}
\gamma = \int_{-\infty}^{+\infty}\frac{1}{t}|t|^{-ic_0^2/2}e^{-it}dt & = - i\int_{-\infty}^{+\infty}\frac{1}{t}|t|^{-ic_0^2/2}\sin(t)dt = -2\sinh(\pi c_0^2/4)\Gamma(-ic_0^2/2).
\end{align*}

\noindent Then, using the identity $|\Gamma(iy)|^2 = \pi / (y\sinh (\pi y)$, $y\in\mathbb R$, it follows that $|\gamma| = (2/c_0)[\pi\tanh(\pi c_0^2/4)]^{1/2}$. If we represent $\gamma \equiv |\gamma|e^{i\arg(\gamma)}$,
\begin{align*}
\hat Y(\eta) = \frac{-Y(0)}{\sqrt{2\pi}}\frac{2}{c_0}\sqrt{\pi\tanh(\pi c_0^2/4)}\Re\left[e^{-i\eta + i(c_0^2/2)\ln|2\eta| + i\arg(\gamma)}\right] + \mathcal O\left(\frac{1}{\eta}\right),
\end{align*}

\noindent which enables us to conclude that
\begin{equation}
\label{doce}
\varlimsup\limits_{\eta\to\infty}\hat Y(\eta) = |Y(0)|\frac{\sqrt2}{c_0}\sqrt{\tanh(\pi c_0^2/4)}.
\end{equation}

\subsubsection{Equation for $({-\mathbf n}+i{\mathbf b})^{\wedge} (\xi)$ and final computation}

\noindent It is straightforward to express $\hat {\mathbf n}(\xi)$ and $\hat {\mathbf b}(\xi)$ in terms of $\hat {\mathbf Y}(\eta)$,  $\eta>0$:
\begin{align*}
{\mathbf T}'(s) = c_0{\mathbf n}(s) & \Longrightarrow c_0\hat{\mathbf n}(\xi) = i\xi\hat{\mathbf T}(\xi) = -\xi^2\hat{\mathbf X}(\xi) = -\hat {\mathbf Y}(\xi^2) = -\hat{\mathbf Y}(\eta),
 \cr
c_0{\mathbf b}'(s) = -\frac{s}{2}{\mathbf X}''(s) & \Longrightarrow c_0i\xi \hat {\mathbf b}(\xi) = \frac{i}{2}\frac{d(\xi^2\hat {\mathbf X}(\xi))}{d\xi} = \frac{i}{2}\frac{d(\hat {\mathbf Y}(\xi^2))}{d\xi} = i\xi \hat {\mathbf Y}'(\xi^2)
 \cr
& \Longrightarrow c_0\hat {\mathbf b}(\xi) = \hat {\mathbf Y}'(\xi^2) = \hat {\mathbf Y}'(\eta) ,
\end{align*}

\noindent where we have differentiated \eqref{e:c0b} in the last expression. Then,
\begin{equation}
\label{energy3}
(-\Ts + i\X_t)^\wedge(\xi) = c_0(-{\mathbf n} + i{\mathbf b})^\wedge(\xi) = (\hat {\mathbf Y} + i\hat {\mathbf Y}')(\xi^2) = (\hat {\mathbf Y} + i\hat {\mathbf Y}')(\eta).
\end{equation}

\noindent Recall that $\mathbf A^+ = (A_1, A_2, A_3)$ and $\mathbf A^- = (A_1, -A_2, -A_3)$. Hence, bearing in mind that $\mathbf A^- \wedge \mathbf A^+=2A_1(0,-A_3,A_2)$, we define
\begin{equation*}
\e = \left(0, \frac{-A_3}{\sqrt{A_2^2+A_3^2}}, \frac{A_2}{\sqrt{A_2^2+A_3^2}}\right),
\end{equation*}

\noindent which is coherent with \eqref{e:rotatedXT}; for example, $X_{rot} \equiv \e\cdot\X$, etc. Then,
\begin{equation}\label{seis}
\hat Y_{rot}(\eta) = \hat Y_{rot}(\xi^2) = \xi^2 \hat X_{rot}(\xi^2) = -\e\cdot c_0\hat{\mathbf n}(\xi).
\end{equation}

\noindent On the one hand, $\hat Y_{rot}$ solves \eqref{e:hatY}; on the other hand,
$$
\hat Y_{rot}(0) = - \e\cdot c_0\hat{\mathbf n}(0) = -\frac{1}{\sqrt{2\pi}}\int_{-\infty}^{+\infty}\e\cdot
c_0\textbf n (s)ds = \left.\frac{1}{\sqrt{2\pi}}\e\cdot \textbf T(s)\right|_{-\infty}^{+\infty} = \frac{1}{\sqrt{2\pi}}\e\cdot(0,2A_2,2A_3)=0,
$$

\noindent so $\hat Y_{rot}'(0)<+\infty$. Moreover, $\e\cdot\mathbf T$ is odd. Therefore, $\e\cdot\mathbf n$ and $\e\cdot\mathbf b =
(1/2)\e\cdot\X - (s/2)\e\cdot\T$ are both even. Hence, Lemma \ref{lema1} applies to
$$
F_{rot}=\hat Y_{rot}+i\hat Y_{rot}'=c_0\e\cdot(-{\hat{\mathbf n}}+i{\hat{\mathbf b}}).
$$

\noindent Our goal is to compute
\begin{equation}
\label{siete}
\hat Y_{rot}'(0) = c_0\int_{-\infty}^{+\infty} \e\cdot {\mathbf b}(s)ds.
\end{equation}

\noindent Observe that
$$
\frac 1{\sqrt{2\pi}}\int_{-\infty}^{+\infty}e^{-i\xi^2+ic_0^2\ln\xi+is\xi}d\xi = \frac{e^{is^2/4}}{\sqrt{2\pi}}\int_{-\infty}^{+\infty}
e^{-i(\xi-\frac s2)^2}|\xi|^{+ic_0^2}d\xi.
$$

\noindent Assuming $s\gg1$, the above integral is then
\begin{align}
\label{e:OI} \frac{e^{is^2/4}}{\sqrt{2\pi}}\left|\frac s2\right|^{ic_0^2}
\int_{-\infty}^{+\infty} e^{i\xi^2}d\xi+\mathcal O\left(\frac{1}{s}\right) = \frac{e^{is^2/4}}{\sqrt{2\pi}}\left|\frac s2\right|^{ic_0^2}\sqrt{i\pi}+\mathcal O\left(\frac{1}{s}\right).
\end{align}

\noindent Hence,
$$
c_0\e\cdot(-{\mathbf n}+i{\mathbf b})=e^{i\frac{s^2}4+ic_0^2\ln(\frac s2)}\sqrt{\frac i2}F_{rot}^\infty +O(1).
$$

\noindent On the other hand, ${\mathbf n}\cdot {\mathbf T}={\mathbf n}\cdot {\mathbf b}={\mathbf b}\cdot {\mathbf T}=0$, and
$$
\e\cdot\T(+\infty)=\left(0, \frac{-A_3}{\sqrt{A_2^2+A_3^2}}, \frac{A_2}{\sqrt{A_2^2+A_3^2}}\right)\cdot (A_1,A_2,A_3) = 0,
$$

\noindent so
$$
\left|\textbf c_0\e\cdot(-{\mathbf n}+i{\mathbf b})(+\infty)\right|^2=c_0^2\left((\e\cdot n)^2+(\e\cdot b)^2\right)=c_0^2.
$$

\noindent Therefore
$$
\left| F_{rot}^\infty \right|=\sqrt 2 c_0,
$$

\noindent and 
\begin{equation}\label{catorce}
\varlimsup\limits_{\eta\to\infty}\hat Y_{rot}(\eta)=\sqrt 2c_0.
\end{equation}

\noindent Recall that, from \eqref{e:hatY'(0)}, we have that
$$
\hat Y_{rot}'(0)=-Y_{rot}(0)\frac{\sqrt\pi}{\sqrt{2}}(1-\tanh(c_0^2\pi/4))=-Y_{rot}(0)\frac{\sqrt{2\pi}}{1+e^{\pi c_0^2/2}}.
$$

\noindent Moreover, from \eqref{doce}, we know that
$$
\varlimsup\limits_{\eta\to\infty}\hat Y_{rot}(\eta) = |Y_{rot}(0)|\frac{\sqrt2}{c_0}\sqrt{\tanh(\pi c_0^2/4)},
$$

\noindent which, from \eqref{catorce}, gives
$$
|Y_{rot}(0)|=\frac{c_0^2}{\sqrt{\tanh(\pi c_0^2/4)}}.
$$

\noindent Hence, bearing in mind \eqref{e:>0},
$$
\hat Y_{rot}'(0) = \frac{c_0^2}{\sqrt{\tanh(\pi c_0^2/4)}}\frac{\sqrt{2\pi}}{1+e^{\pi c_0^2/2}} = \frac{\sqrt {2\pi}c_0^2}{\sqrt{e^{\pi c_0^2} - 1}}.
$$

\noindent Therefore, from \eqref{e:sqrt2pihatXrot}, we get \eqref{e:teo}, which concludes the proof of Theorem \ref{teo:teo}.

\qed

\subsection{Numerical integration of \eqref{e:teo}}

From \eqref{e:ode}, together with \eqref{e:1cornerproblemsystem0} at $t = 1$, we get the following initial value problems for $X_2$ and $X_3$ (see also \cite{GutierrezRivasVega2003}):
\begin{equation*}
\begin{cases}
X_2'''(s) + \left(c_0^2 + \dfrac{s^2}{4}\right)X_2'(s) - \dfrac{s}{4}X_2(s) = 0,
\cr
X_2(0) = 0,
\cr
X_2'(0) = 0,
\cr
X_2''(0) = c_0,
\end{cases}
\end{equation*}

\noindent and
\begin{equation*}
\begin{cases}
X_3'''(s) + \left(c_0^2 + \dfrac{s^2}{4}\right)X_3'(s) - \dfrac{s}{4}X_3(s) = 0,
\cr
X_3(0) = 2c_0,
\cr
X_3'(0) = 0,
\cr
X_3''(0) = 0.
\end{cases}
\end{equation*}

\noindent Therefore, from \eqref{e:rotatedXT}, $X_{rot}$ is a solution of
\begin{equation}
\label{e:Grot}
\begin{cases}
X_{rot}'''(s) + \left(c_0^2 + \dfrac{s^2}{4}\right)X_{rot}'(s) - \dfrac{s}{4}X_{rot}(s) = 0,
\cr
X_{rot}(0) = \dfrac{2A_2\,c_0}{\sqrt{A_2^2 + A_3^2}},
\cr
X_{rot}'(0) = 0,
\cr
X_{rot}''(0) = -\dfrac{A_3\,c_0}{\sqrt{A_2^2 + A_3^2}}.
\end{cases}
\end{equation}

\noindent It is immediate to check that, if $X_{rot}$ is a solution of \eqref{e:Grot}, so is $\tilde X_{rot}(s) = X_{rot}(-s)$. Moreover, since $\tilde X_{rot}(0) = X_{rot}(0)$, $\tilde X_{rot}'(0) = X_{rot}'(0) = 0$, and $\tilde X_{rot}''(0) = X_{rot}''(0)$, we conclude that $X_{rot}$ is an even function, so \eqref{e:teo} becomes
\begin{equation}
\label{e:c_MH_rot}
\int_{-\infty}^{+\infty}X_{rot}(s)ds = 2\int_{0}^{+\infty}X_{rot}(s)ds = 
2\lim_{s\to\infty}H_{rot}(s),
\end{equation}

\noindent where
$$
H_{rot}(s) = \int_0^sX_{rot}(s')ds'
$$

\noindent is the solution of
\begin{equation}
\label{e:Hrot}
\begin{cases}
H_{rot}^{(4)}(s) + \left(c_0^2 + \dfrac{s^2}{4}\right)H_{rot}''(s) - \dfrac{s}{4}H_{rot}'(s) = 0,
\cr
H_{rot}(0) = 0,
\cr
H_{rot}'(0) = \dfrac{2A_2\,c_0}{\sqrt{A_2^2 + A_3^2}},
\cr
H_{rot}''(0) = 0,
\cr
H_{rot}'''(0) = -\dfrac{A_3\,c_0}{\sqrt{A_2^2 + A_3^2}}.
\end{cases}
\end{equation}

\noindent The numerical integration of \eqref{e:Hrot} poses no difficulty. We have approximated numerically $H_{rot}(s)$ in $s\in[0,1000]$, for a large set of $M$, namely $M\in\{3, \ldots, 10000\}$, by means of a fourth-order Runge-Kutta, taking $\Delta s = 10^{-3}$, and compared the value of $H_{rot}(1000)$ thus obtained, with the value of $\lim_{s\to\infty}H_{rot}(s)$ given by Theorem \eqref{teo:teo}, together with \eqref{e:c_0} and \eqref{e:c_MH_rot}:
\begin{equation}
\label{e:limitH}
\lim_{s\to \infty} H_{rot}(s) = \frac12\int_{-\infty}^{+\infty}X_{rot}(s)ds = \frac{\pi c_0^2}{\sqrt{e^{\pi c_0^2} - 1}} = \frac{\ln(1 + \tan^2(\pi/M))}{\tan(\pi/M)}.
\end{equation}

\noindent Since the largest deviation between the numerical approximations of $H_{rot}$ and their theoretical value for all the $M$ considered is only of $3.3507\cdot10^{-10}$ (and happens when $M = 4$), the results can be regarded as a sort of ``numerical proof'' of Thereom \ref{teo:teo}.

On the left-hand side of Figure \ref{f:HrotsM3}, we have plotted, in semilogarithmic scale, $H(s)$ as a function of $s$; remark how $H_{rot}(s)$ converges to a constant value approximately equal to $0.8$, as $s$ grows up; this convergence appears more clearly on the right-hand side of Figure \ref{f:HrotsM3}, where we have plotted, in logarithmic scale, $|H_{rot}(s) - \ln(4) / \sqrt{3}|$, i.e., the discrepancy between $H_{rot}(s)$ and its theoretical value in the limit, which, from \eqref{e:limitH}, is precisely $\lim_{s\to\infty}H_{rot}(s) = \ln(4) / \sqrt{3} = 0.800377422568629$.

\begin{figure}[!htb]
	\centering
	\includegraphics[width=0.49\textwidth, clip=true]{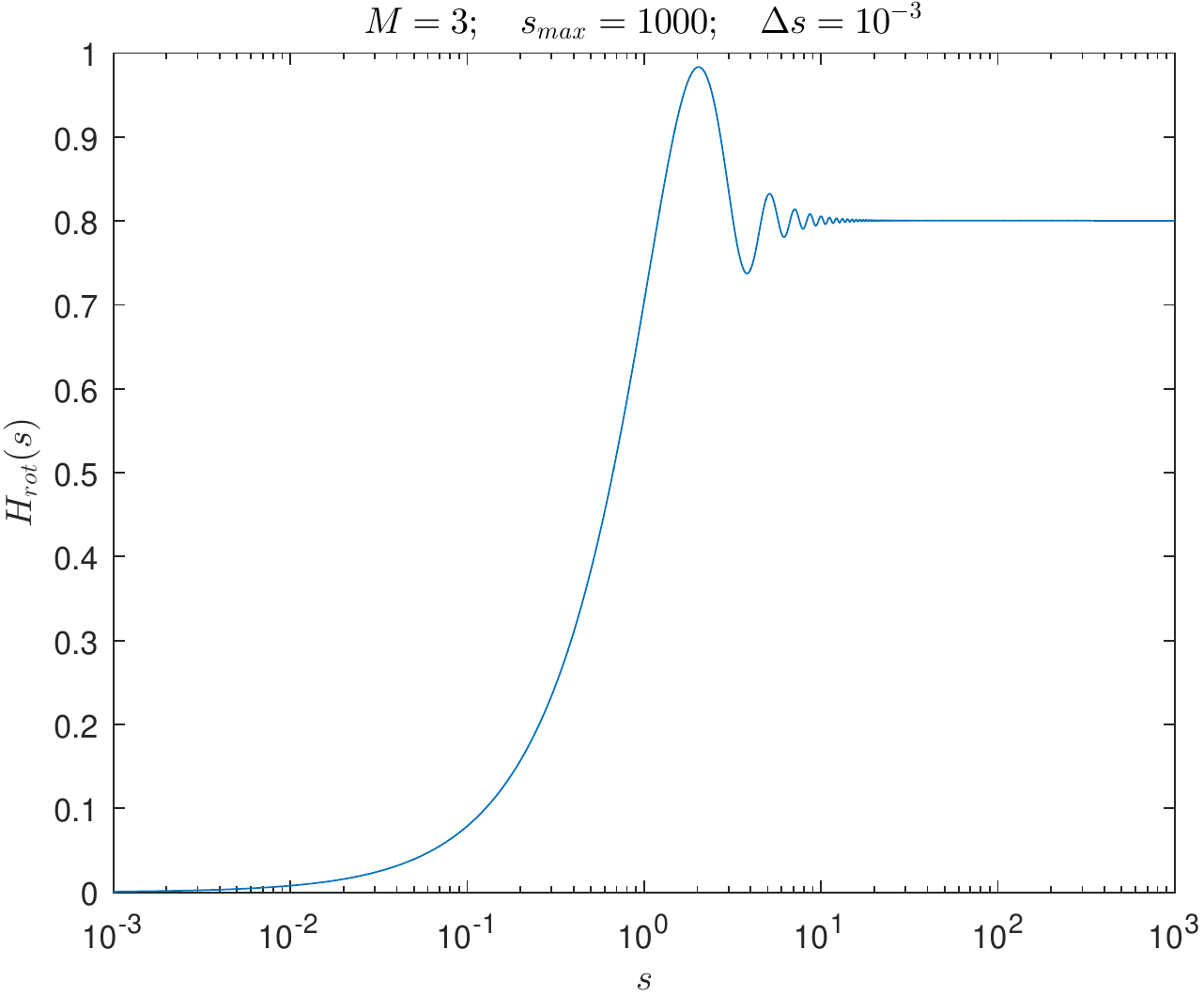}
	\includegraphics[width=0.49\textwidth, clip=true]{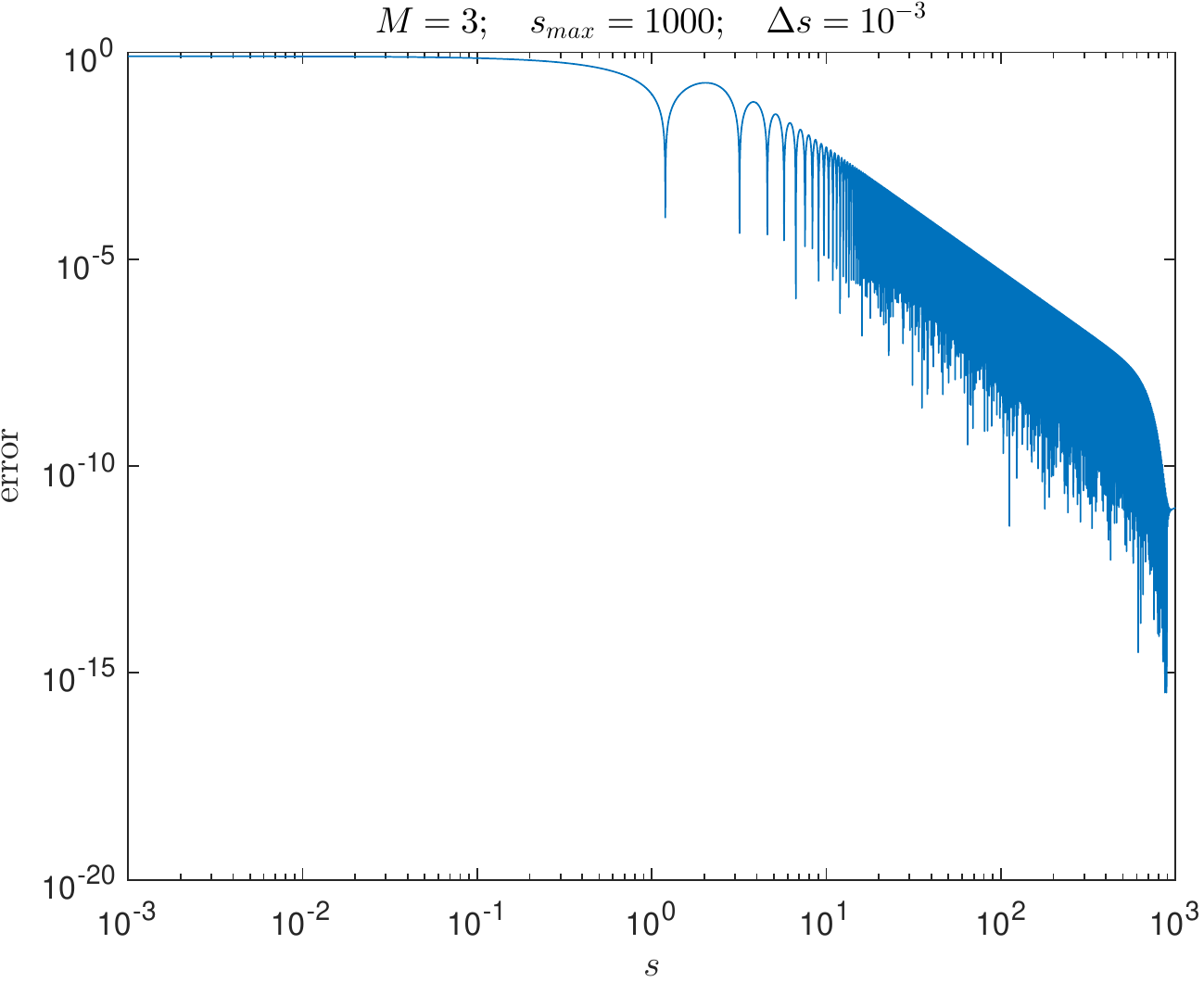}
	\caption{Left: Semilogarithmic plot of $H_{rot}(s)$, $s\in[0,1000]$, for $M = 3$. Right: Logarithmic plot of $|H_{rot}(s) - \ln(4) / \sqrt{3}|$, where $\lim_{s\to\infty}H_{rot}(s) = \ln(4) / \sqrt{3}$.}\label{f:HrotsM3}
\end{figure}

\subsection{Approximation of $c_M$ from the numerical simulation of \eqref{e:xt}-\eqref{e:schmap}}

In the beginning of this section, bearing in mind the relationship between the one-corner problem and the $M$-corner problem, we have conjectured that $c_M$ is given by \eqref{e:cMlimit2}. Then, assuming this conjecture to be true, the value of $c_M$ given by \eqref{e:cM} follows trivially from Theorem \ref{teo:teo}. In order to check numerically \eqref{e:cM}, we have compared it with the numerical approximations of $c_M$ given by $\mean(X_{num,3}(2\pi/M^2)) / (2\pi/M^2)$. For that purpose, we have simulated the evolution of \eqref{e:xt}-\eqref{e:schmap} as explained in \cite{HozVega2014}, for $M = 3, \ldots, 20$, and for different values of $N/M$ (cfr. \cite[Table 1]{HozVega2014}).

In Table \ref{t:errorcM}, we give $|c_M - \mean(X_{num,3}(2\pi/M^2)) / (2\pi/M^2)|$, where $c_M$ is given by \eqref{e:cM}. As in \cite{HozVega2014}, we have taken $N_t = 151200\cdot4^r$ time steps, where $N/M = 512\cdot2^r$; hence, $\Delta t = (2\pi/M^2)/N_t$. On the one hand, for a given $M$, when $N/M$ is doubled, the errors are divided by a factor a bit smaller than two, hence suggesting that the errors behave as $\mathcal O((N/M)^{-1})$, or, in other words, that there is a first-order convergence rate from $\mean(X_{num,3}(2\pi/M^2)) / (2\pi/M^2)$ to $c_M$ in \eqref{e:cM}. On the other hand, the accuracy also improves as $M$ gets bigger; this makes sense, too, because, $\Delta t$ gets smaller as well. When $N / M = 16384$, the numerical experiments are computationally expensive, so we have considered only $M = 3$, i.e., the case with the worst agreement, obtaining an improved error of $6.1416\cdot10^{-5}$ in the approximation of $c_M$.

\begin{table}[htb!]
	\centering
	\begin{tabular}{|c|c|c|c|c|c|}
		\hline $M$ & $N / M = 512$ & $N / M = 1024$ & $N / M = 2048$ & $N / M = 4096$ & $N / M = 8192$
		\\
		\hline $3$ & $7.4202\cdot10^{-4}$ & $4.7459\cdot10^{-4}$ & $2.9348\cdot10^{-4}$ & $1.7710\cdot10^{-4}$ & $1.0498\cdot10^{-4}$
		\\
		\hline $4$ & $5.5496\cdot10^{-4}$ & $3.3166\cdot10^{-4}$ & $1.9528\cdot10^{-4}$ & $1.1373\cdot10^{-4}$ & $6.5697\cdot10^{-5}$
		\\
		\hline $5$ & $3.8430\cdot10^{-4}$ & $2.2572\cdot10^{-4}$ & $1.3119\cdot10^{-4}$ & $7.5657\cdot10^{-5}$ & $4.3376\cdot10^{-5}$
		\\
		\hline $6$ & $2.7679\cdot10^{-4}$ & $1.6134\cdot10^{-4}$ & $9.3217\cdot10^{-5}$ & $5.3512\cdot10^{-5}$ & $3.0570\cdot10^{-5}$
		\\
		\hline $7$ & $2.0761\cdot10^{-4}$ & $1.2050\cdot10^{-4}$ & $6.9394\cdot10^{-5}$ & $3.9732\cdot10^{-5}$ & $2.2651\cdot10^{-5}$
		\\
		\hline $8$ & $1.6105\cdot10^{-4}$ & $9.3235\cdot10^{-5}$ & $5.3579\cdot10^{-5}$ & $3.0626\cdot10^{-5}$ & $1.7437\cdot10^{-5}$
		\\
		\hline $9$ & $1.2838\cdot10^{-4}$ & $7.4195\cdot10^{-5}$ & $4.2578\cdot10^{-5}$ & $2.4310\cdot10^{-5}$ & $1.3828\cdot10^{-5}$
		\\
		\hline $10$ & $1.0465\cdot10^{-4}$ & $6.0406\cdot10^{-5}$ & $3.4630\cdot10^{-5}$ & $1.9757\cdot10^{-5}$ & $1.1231\cdot10^{-5}$
		\\
		\hline $11$ & $8.6891\cdot10^{-5}$ & $5.0112\cdot10^{-5}$ & $2.8707\cdot10^{-5}$ & $1.6368\cdot10^{-5}$ & $9.3001\cdot10^{-6}$
		\\
		\hline $12$ & $7.3271\cdot10^{-5}$ & $4.2229\cdot10^{-5}$ & $2.4178\cdot10^{-5}$ & $1.3780\cdot10^{-5}$ & $7.8265\cdot10^{-6}$
		\\
		\hline $13$ & $6.2605\cdot10^{-5}$ & $3.6063\cdot10^{-5}$ & $2.0639\cdot10^{-5}$ & $1.1758\cdot10^{-5}$ & $6.6766\cdot10^{-6}$
		\\
		\hline $14$ & $5.4098\cdot10^{-5}$ & $3.1151\cdot10^{-5}$ & $1.7822\cdot10^{-5}$ & $1.0150\cdot10^{-5}$ & $5.7623\cdot10^{-6}$
		\\
		\hline $15$ & $4.7209\cdot10^{-5}$ & $2.7175\cdot10^{-5}$ & $1.5543\cdot10^{-5}$ & $8.8505\cdot10^{-6}$ & $5.0234\cdot10^{-6}$
		\\
		\hline $16$ & $4.1551\cdot10^{-5}$ & $2.3912\cdot10^{-5}$ & $1.3674\cdot10^{-5}$ & $7.7848\cdot10^{-6}$ & $4.4179\cdot10^{-6}$
		\\
		\hline $17$ & $3.6850\cdot10^{-5}$ & $2.1203\cdot10^{-5}$ & $1.2122\cdot10^{-5}$ & $6.9003\cdot10^{-6}$ & $3.9154\cdot10^{-6}$
		\\
		\hline $18$ & $3.2902\cdot10^{-5}$ & $1.8928\cdot10^{-5}$ & $1.0820\cdot10^{-5}$ & $6.1582\cdot10^{-6}$ & $3.4940\cdot10^{-6}$
		\\
		\hline $19$ & $2.9555\cdot10^{-5}$ & $1.7000\cdot10^{-5}$ & $9.7164\cdot10^{-6}$ & $5.5296\cdot10^{-6}$ & $3.1370\cdot10^{-6}$
		\\
		\hline $20$ & $2.6692\cdot10^{-5}$ & $1.5351\cdot10^{-5}$ & $8.7733\cdot10^{-6}$ & $4.9924\cdot10^{-6}$ & $2.8320\cdot10^{-6}$
		\\
		\hline
	\end{tabular}
	\caption{$|c_M - \mean(X_{num,3}(2\pi/M^2)) / (2\pi/M^2)|$, where $c_M$ is given by \eqref{e:cM}. As in \cite{HozVega2014}, we have taken $N_t = 151200\cdot4^r$ time steps, where $N/M = 512\cdot2^r$; hence, $\Delta t = (2\pi/M^2)/N_t$. For a given $M$, the results suggest a first-order convergence rate from $\mean(X_{num,3}(2\pi/M^2)) / (2\pi/M^2)$ to $c_M$ in \eqref{e:cM}. In general, the results improve also as $M$ grows. In the worst case, i.e., when $M = 3$, i.e., we have considered also $N/M = 16384$, obtaining an improved error of $6.1416\cdot10^{-5}$ in the approximation of $c_M$.}\label{t:errorcM}
\end{table}

In our opinion, the results in Table \ref{t:errorcM} give additional support to the validity of \eqref{e:cM} and, hence, to the validity of the hypotheses upon which Theorem \ref{teo:teo} was based. In order to give concluding evidence that \eqref{e:cM} is correct, we will reobtain it in the following section, in a completely unrelated way.

\section{Algebraic computation of $c_M$}

\label{s:algebraic}

In this section, we deduce \eqref{e:cM} using algebraic means. Unlike in Section \ref{s:analytical}, where we worked mainly with the one-problem corner, we work here exclusively with the $M$-corner problem. Therefore, in order not to burden the notation, we omit the subscript $M$ in $\X_M$, because there is no risk of confusion.

Our starting point is given again by \eqref{e:mean} and \eqref{e:ht}. However, unlike in Section \ref{s:analytical}, where, in order to relate the calculation of $c_M$ with the one-corner problem, we integrated the third component of \eqref{e:mean} over one $M$-th of the period, we consider the whole period here. The numerical simulations clearly shows that
\begin{equation*}
\mean(X_{3,t})(t) \not= [\mean(X_3)]'(t),
\end{equation*}

\noindent i.e.,
\begin{equation}
\label{e:intX_3cM}
\frac{1}{2\pi}\int_0^{2\pi}X_{3,t}(s,t)ds\not=\frac{d}{dt}\left(\frac{1}{2\pi}\int_0^{2\pi}X_3(s,t)ds\right) = h'(t) = c_M;
\end{equation}

\noindent in fact, the left-hand side is not only nonconstant, but also very singular (later on, in \eqref{e:intX3tpq}, we will give an explicit expression of it, at times of the form $t = t_{pq}$). However, we do compute numerically $h(t)$ precisely as
\begin{equation}
\label{e:h(t)}
h(t) = \int_0^t\mean(X_{3,t})(t')dt' = \int_0^t\left[\frac{1}{2\pi}\int_0^{2\pi}X_{3,t}(s,t')ds\right]dt'.
\end{equation}

\noindent Therefore, denoting $\ez = (0, 0, 1)^T$, we have to give sense to
\begin{equation}
\label{e:x3t}
\int_0^{2\pi}X_{3,t}(s,t)ds = \int_0^{2\pi}(\Xt(s, t)\cdot\ez)ds = \left[\int_0^{2\pi}\T(s, t)\wedge\Ts(s, t)ds\right]\cdot\ez,
\end{equation}

\noindent at times of the form $t = t_{pq}$. Since the cross product $\wedge$ is rotation invariant, let us suppose, without loss of generality, that
\begin{equation*}
\T(s) =
\begin{cases}
(1, 0, 0)^T, & s < 0,
\cr
(\cos(\rho), \sin(\rho), 0)^T, & s > 0,
\end{cases}
\end{equation*}

\noindent i.e., there is a rotation of angle $\rho$ at $s = 0$, where $\rho$ is given by \eqref{e:cosrho}. Then,
$$
\int_{-\infty}^{+\infty}\T(s)\wedge\Ts(s)ds = \int_{0^-}^{0^+}\T(s)\wedge\Ts(s)ds = \rho(0,0,1)^T = \frac{\rho}{\sin(\rho)}\T(0^-)\wedge\T(0^+).
$$

\noindent Therefore, at a time $t_{pq}$, we have
\begin{equation*}
\int_0^{2\pi}X_{3,t}(s,t_{pq})ds = \left[\frac{\rho}{\sin(\rho)} \sum_j\T_j\wedge\T_{j + 1}\right]\cdot\ez,
\end{equation*}

\noindent where $\T_j$ are the tangent vectors of the $Mq$ or $Mq/2$ sides of the corresponding skew polygon, with $j\in\{0,Mq-1\}$, for $q$ odd, or $j\in\{0,Mq/2-1\}$, for $q$ even. In fact, since $\int_0^{2\pi}\Xt(s,t_{pq})ds$ points already in the $\ez$ direction,
\begin{equation*}
\int_0^{2\pi}\Xt(s,t_{pq})ds = \frac{\rho}{\sin(\rho)} \sum_j\T_j\wedge\T_{j + 1}.
\end{equation*}

\noindent Observe that the last equation is equivalent to writing
\begin{equation*}
\int_0^{2\pi}\Xt(s,t_{pq})ds = \frac{\rho}{\sin(\rho)} \Delta s\sum_j\T_j\wedge\frac{\T_{j + 1} - \T_j}{\Delta s};
\end{equation*}

\noindent hence, formally,
\begin{equation*}
\lim_{q\to\infty}\left[\frac{\rho}{\sin(\rho)} \Delta s\sum_j\T_j\wedge\frac{\T_{j + 1} - \T_j}{\Delta s}\right] = \int_0^{2\pi}\T(s)\wedge\Ts(s)ds.
\end{equation*}

\noindent Coming back to \eqref{e:x3t}, we have used symbolic manipulation exactly as in \cite{HozVega2014}, in order to compute $\sum_j\T_j\wedge\T_{j + 1} \equiv \sum_j\T_{alg,j}\wedge\T_{alg,j + 1}$. After considering a few small values of $q$, the logic becomes apparent and we can conjecture that, for all $p$ coprime with $q$,
\begin{equation*}
\left[\sum_j\T_{alg,j}\wedge\T_{alg,j + 1}\right]\cdot\ez =
\begin{cases}
\dfrac{(1 - \cos(\rho)) M q}{\tan(\pi / M)}, & \mbox{if $q\equiv1\bmod2$},
\cr
\dfrac{(1 - \cos(\rho)) M (q/2)}{\tan(\pi / M)}, & \mbox{if $q\equiv0\bmod2$},
\end{cases}
\end{equation*}

\noindent from which the value of the left-hand side of \eqref{e:intX_3cM} follows:
\begin{equation}
\label{e:intX3tpq}
\frac{1}{2\pi}\int_0^{2\pi}X_{3,t}(s,t_{pq})ds =
\begin{cases}
\dfrac{1}{2\pi}\dfrac{\rho}{\sin(\rho)}\dfrac{(1 - \cos(\rho)) M q}{\tan(\pi / M)}, & \mbox{if $q\equiv1\bmod2$},
\cr
\dfrac{1}{2\pi}\dfrac{\rho}{\sin(\rho)}\dfrac{(1 - \cos(\rho)) M (q/2)}{\tan(\pi / M)}, & \mbox{if $q\equiv0\bmod2$}.
\end{cases}
\end{equation}

\noindent On the other hand, when $p\equiv 0\bmod q$ (and also at $t = t_{12}$), we have a planar polygon, so
\begin{equation}
\label{e:intX3t0q}
\frac{1}{2\pi}\int_0^{2\pi}X_{3,t}(s,t_{pq})ds = 1.
\end{equation}

\noindent Once understood $\int_0^{2\pi}X_{3,t}(s,t_{pq})ds$, we are going to be able to calculate $c_M$. Let us assume without loss of generality that $q$ is an odd prime. Then, from \eqref{e:h(t)},
\begin{align*}
h(2\pi/M^2) & = \int_0^{2\pi/M^2}\left[\frac{1}{2\pi}\int_0^{2\pi}X_{3,t}(s,t')ds\right]dt'
\cr
& = \lim_{q\to\infty}\left[\frac{2\pi}{M^2q}\frac{1}{2\pi}\left(\frac{1}{2}\int_0^{2\pi}X_{3,t}(s,t_{0q})ds + \sum_{p=1}^{q-1}\int_0^{2\pi}X_{3,t}(s,t_{pq})ds + \frac{1}{2}\int_0^{2\pi}X_{3,t}(s,t_{qq})ds\right)\right]
\cr
& = \lim_{q\to\infty}\left[\frac{2\pi}{M^2q}\left(\frac{1}{2} + \sum_{p=1}^{q-1}\left(
\dfrac{1}{2\pi}\dfrac{\rho}{\sin(\rho)}\dfrac{(1 - \cos(\rho)) M q}{\tan(\pi / M)}\right) + \frac{1}{2}\right)\right],
\cr
& = \lim_{q\to\infty}\left[\frac{1}{M^2}\dfrac{\rho}{\sin(\rho)}\dfrac{(1 - \cos(\rho)) M q}{\tan(\pi / M)}\right],
\end{align*}

\noindent where we have approximated the integral over $t$ by means of the trapezoidal rule, and taken the limit $q\to\infty$. On the other hand, $\lim_{q\to\infty}\rho = 0$, so $\lim_{q\to\infty}\rho / \sin(\rho) = 1$, and we conclude that
\begin{align}
\label{e:htqq}
h(2\pi/M^2) & = \frac{1}{M\tan(\pi/M)}\lim_{q\to\infty}[(1 - \cos(\rho)) q]
\cr
& = \frac{1}{M\tan(\pi/M)}\lim_{q\to\infty}[(1 - (2\cos^{2/q}(\pi/M) - 1)) q]
\cr
& = \frac{-4\ln(\cos(\pi/M))}{M\tan(\pi/M)}.
\end{align}

\noindent Finally, bearing in mind that $h(2\pi/M^2) = c_M (2\pi/M^2)$, we recover \eqref{e:cM}:
\begin{equation}
\label{e:c_Malg}
c_M = \frac{h(2\pi/M^2)}{2\pi/M^2} = \frac{-2\ln(\cos(\pi/M))}{(\pi/M)\tan(\pi/M)} = \frac{\ln(1 + \tan^2(\pi/M))}{(\pi/M)\tan(\pi/M)}.
\end{equation}

\noindent A similar reasoning would show that $h(t_{pq})= c_M t_{pq}$, for any time $t_{pq}$.

\section{Transfer of energy}

\label{s:energy}

Recall that, from \eqref{energy1} and \eqref{energy2}, $|\X_t(s,t)|^2ds $ and $|\T_s(s,t)|^2ds $ are the natural energy densities associated to the solutions of \eqref{e:xt} and \eqref{e:schmap}, respectively. We know from Lemma \ref{basiclemma} and \eqref{energy3} that, for the selfsimilar solution at time $t=1$, while $|\T_s|$ remains bounded, there are some components of $\X_t$ that have a logarithmic growth. On the other hand, in \cite{BanicaVega2016}, the behavior of
$$
\|\widehat{\T_s}(t)\|_{\infty}=\sup_\xi|\xi\,\widehat{\T}(\xi, t)|
$$

\noindent is studied for solutions of the one-corner problem and small regular perturbations of it. In particular, in Theorem 1.1 of that paper, a lack of continuity of this quantity is proved at the time where the corner appears. Let us assume that this fact happens at the initial time $t=0$. Then, if $0<t$, the following result is proved:
\begin{equation}
\label{e:continuity}\|\widehat{\T_s}(0)\|_{\infty}<\|\widehat{\T_s}(t)\|_{\infty}.
\end{equation}

\noindent Moreover, in the proof of the above result, it is observed that, while the maximum of the left-hand side of \eqref{e:continuity} is taken at small frequencies, the maximum at the right-hand side is achieved at large frequencies. Motivated again by the results obtained in the previous sections, it is also very natural to ask which would be the behavior of
$$
\|\widehat{\T_{M,s}}(\tpq)\|_{\infty} = \max_k|k\, \widehat{\T_{M}}(k, \tpq)|,
$$
\noindent in the case of a regular polygon with $M$ sides. Note that, as usually, $\X_M$, $\T_M$ and related quantities will be computed algebraically.

In order to compute accurately $\|\widehat{\T_{M,s}}(\tpq)\|_{\infty}$, we will need a couple of lemmas. During the rest of this section, all the appearances of $\X$ and $\T$ will refer exclusively to the $M$-corner problem, so we will omit the subscript $_M$, in order not to burden the notation.

\begin{lema}
\label{lema:periodick}
Let be $P(s)$ a real $2\pi$-periodic function, such that, for some $N\in\mathbb N$, some $\delta\in\mathbb R$, and for all $j\in\mathbb Z$, it is piecewise constant, with value $P(s)\equiv P_j$ at $s\in(s_j, s_{j+1})$, where $s_j = 2\pi j / N - \delta$. Then, $|\widehat{P_s}(k)| = |k\,\widehat P(k)|$ is $N$-periodic in $k$.

\end{lema}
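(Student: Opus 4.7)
The approach is essentially a direct computation exploiting the fact that $P_s$, in the distributional sense, is a linear combination of Dirac deltas supported on the equispaced grid $\{s_j\}$. Because the values $P_j$ are $N$-periodic in $j$ (forced by the $2\pi$-periodicity of $P$, since shifting $j$ by $N$ shifts $s_j$ by $2\pi$), the Fourier coefficient of $P_s$ collapses to a finite discrete Fourier sum over one period, and the $N$-periodicity in $k$ will fall out for free from the identity $e^{-2\pi i j N/N}=1$.

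Concretely, I would first write
\begin{equation*}
P_s(s) = \sum_{j\in\mathbb Z}(P_j - P_{j-1})\,\delta(s - s_j),
\end{equation*}
so that, taking any fundamental period containing exactly $N$ of the points $s_j$, one obtains
\begin{equation*}
\widehat{P_s}(k) = \frac{1}{2\pi}\int P_s(s)\,e^{-iks}\,ds = \frac{e^{ik\delta}}{2\pi}\sum_{j=0}^{N-1}(P_j - P_{j-1})\,e^{-2\pi i jk/N},
\end{equation*}
after substituting $s_j = 2\pi j/N - \delta$. Replacing $k$ by $k+N$ multiplies the prefactor by $e^{iN\delta}$ (a unit modulus constant) and leaves each exponential in the sum invariant, since $e^{-2\pi ij(k+N)/N}=e^{-2\pi ijk/N}$. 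Therefore
\begin{equation*}
\widehat{P_s}(k+N) = e^{iN\delta}\,\widehat{P_s}(k),
\end{equation*}
and taking absolute values gives the desired $N$-periodicity of $|\widehat{P_s}(k)|$.

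Finally, to identify $|\widehat{P_s}(k)|$ with $|k\,\widehat P(k)|$, I would perform a standard integration by parts on the Fourier integral defining $\widehat{P_s}(k)$; the boundary terms vanish by the $2\pi$-periodicity of $P$, yielding $\widehat{P_s}(k)=ik\,\widehat P(k)$, and hence the two moduli coincide.

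The only mild subtlety is the interpretation of $P_s$ as a measure: one must confirm that the jump contributions I wrote above are indeed the full distributional derivative, which is immediate since $P$ is constant between consecutive $s_j$'s. Beyond that bookkeeping, no real obstacle arises — the lemma reduces to the elementary observation that a discrete Fourier transform on $N$ nodes is automatically $N$-periodic in its dual variable.
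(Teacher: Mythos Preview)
Your proof is correct and follows essentially the same direct-computation approach as the paper. The only cosmetic difference is that the paper first computes $ik\,\widehat P(k)$ by integrating $P$ over each subinterval $(s_j,s_{j+1})$, obtaining the equivalent expression $e^{ik\delta}\,\tfrac{1-e^{-2\pi ik/N}}{2\pi}\sum_{j=0}^{N-1}P_j\,e^{-2\pi ijk/N}$ (which is your jump formula after an Abel summation), and then verifies the $N$-periodicity of its modulus exactly as you do.
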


\begin{proof}
\begin{align*}
\widehat{P_s}(k) & = ik\,\widehat P(k) = \frac{ik}{2\pi}\int_{-\delta}^{2\pi - \delta}P(s)e^{-iks}ds = \frac{ik}{2\pi}\sum_{j=0}^{N-1}\int_{s_j}^{s_{j+1}}P_je^{-iks}ds 
	\cr
& = -\frac{1}{2\pi}\sum_{j=0}^{N-1}P_j(e^{-iks_{j+1}} - e^{-iks_j}) = e^{ik\delta}\frac{1 - e^{-2\pi ik/N}}{2\pi}\sum_{j=0}^{N-1}P_je^{-2\pi ijk/N}.
\end{align*}

\noindent Hence,
\begin{align*}
|\widehat{P_s}(k)| = \left|\frac{\sin(\pi k / N)}{\pi}\sum_{j=0}^{N-1}P_je^{-2\pi ijk/N}\right|,
\end{align*}

\noindent so we conclude that
$$
|\widehat{P_s}(k+N)| = \left|\frac{\sin(\pi (k+N) / N)}{\pi}\sum_{j=0}^{N-1}P_je^{-2\pi ij(k+N)/N}\right| = |\widehat{P_s}(k)|.
$$

\end{proof}	

\begin{coro}
\label{coro:khatT}
Let $\T$ be the solution of \eqref{e:schmap} for the $M$-corner problem, and let $T(s)$ denote any component of $\T(s, t_{pq})$. Then, $|\widehat{T_s}(k)| = |k\,\widehat T(k)|$ is $Mq$ periodic in $k$, for $q$ odd, and $Mq/2$ periodic in $k$, for $q$ even.
\end{coro}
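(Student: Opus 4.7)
The plan is to deduce the corollary as a direct application of Lemma \ref{lema:periodick}, once we identify the correct piecewise-constant structure of each component of $\T(s, t_{pq})$ and read off the corresponding $N$.

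Recall from \eqref{psistpq} that $\psi(s, t_{pq})$ is a sum of Dirac deltas located at the points $s = 2\pi k/M + 2\pi m/(Mq)$, with $k\in\mathbb Z$ and $m=0,\ldots,q-1$. Since $\T$ satisfies the Frenet-type system \eqref{e:Te1e2} driven by $\alpha+i\beta=\psi$, between two consecutive delta locations $\T(\cdot,t_{pq})$ is constant, and at each delta it undergoes the rotation described by the matrix $\M_m$ in \eqref{e:Mm}. Hence every component $T(s)$ of $\T(s,t_{pq})$ is $2\pi$-periodic and piecewise constant.

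First I would treat the case $q\equiv 1\bmod 2$. Here all $\rho_m\ne 0$ by \eqref{e:rhom}, so every delta produces an actual rotation and $T$ has exactly $Mq$ jumps per period, at the points $s_j^{\rm odd}=2\pi j/(Mq)-\delta$ for a suitable offset $\delta$ determined by where we choose to place $s=0$. This fits the hypothesis of Lemma \ref{lema:periodick} with $N=Mq$, and the lemma immediately gives the $Mq$-periodicity of $|\widehat{T_s}(k)|$ in $k$. Next I would handle $q\equiv 0\bmod 2$. By \eqref{e:rhom}, exactly half of the $\rho_m$ vanish (those for which $q/2\not\equiv m\bmod 2$), so the corresponding $\M_m$ equal the identity and $T$ does not jump at those points. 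The remaining jumps occur at equally spaced locations, namely at points of the form $s_j^{\rm even}=4\pi j/(Mq)-\delta=2\pi j/(Mq/2)-\delta$. This is again the setting of Lemma \ref{lema:periodick}, now with $N=Mq/2$, yielding the claimed $Mq/2$-periodicity.

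Since $|\widehat{T_s}(k)|=|k\widehat T(k)|$ and this periodicity holds for each component of $\T$ separately, the corollary follows. The only real content beyond invoking the lemma is the bookkeeping argument that in the even-$q$ case the nonzero-$\rho_m$ positions are themselves arithmetically spaced with step $4\pi/(Mq)$; once that observation is made the proof is essentially immediate, so I do not anticipate any serious obstacle.
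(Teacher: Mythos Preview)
Your proposal is correct and follows the same route as the paper: the corollary is obtained simply by recognising that each component of $\T(\cdot,t_{pq})$ is a $2\pi$-periodic piecewise-constant function with jumps at the delta locations of \eqref{psistpq}, and then invoking Lemma~\ref{lema:periodick} with $N=Mq$ when $q$ is odd and $N=Mq/2$ when $q$ is even (the latter because half the $\rho_m$ vanish). The paper in fact gives no separate proof of the corollary beyond a remark identifying the appropriate $N$ and $\delta$ in each parity case, so your write-up is, if anything, slightly more detailed than what appears there.
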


In fact, looking at the proof of Lemma \ref{lema:periodick}, when $q$ is odd, $\delta = 0$, so $k\,\widehat T(k)$ is $Mq$ periodic in $k$; and when $q\equiv 0\bmod4$, $\delta = 0$ again, so $k\,\widehat T(k)$ is $Mq/2$ periodic in $k$. However, when $q\equiv2\bmod4$, $\delta = \pi / N$, so $k\,\widehat T(k)$ is only $Mq$ periodic in $k$. In general, since we are interested in $|k\,\widehat T(k)|$ and not in $k\,\widehat T(k)$ itself, we can discretize $s\in[0,2\pi)$ systematically at $s_j = 2\pi j / (Mq)$, for $q$ odd; and at $s_j = 4\pi j / (Mq)$, for $q$ even. Then, we take $T_j\equiv T(s_j^+)$ in all cases, and the following equality always holds:
\begin{equation}
\label{e:hatTsk}
|\widehat{T_s}(k)| = \left|\frac{\sin(\pi k / N)}{\pi}\sum_{j=0}^{N-1}T(s_j^+)e^{-2\pi ijk/N}\right|.
\end{equation}

\begin{lema}
\label{lemma:T1sT2s}
Let $(T_1, T_2, T_3)$ be the components of $\T(s, \tpq)$. Let us define $Z \equiv T_1 + iT_2$, and denote $T_{1,j} \equiv T_1(s_j^+)$, $T_{2,j} \equiv T_2(s_j^+)$, $T_{3,j} \equiv T_3(s_j^+)$, $Z_j \equiv T_{1,j} + iT_{2,j}\equiv Z(s_j^+)$, $s_j = 2\pi j/N$, $j\in\{0,\ldots,N-1\}$, where $N = Mq$, for $q$ odd; and $N = Mq/2$, for $q$ even. Then
\begin{equation}
\label{e:normhatTsk}
|\widehat{\Ts}(k)| = \left(\sum_{l=1}^3|\widehat{T_{l,s}}(k)|^2\right)^{1/2} =
\begin{cases}
\left|\dfrac{M\sin(\pi k / N)}{\sqrt2\pi}\displaystyle{\sum_{j = 0}^{N/M - 1}}Z_je^{-2\pi i jk/N}\right|, & \mbox{if $k \equiv \pm1 \bmod M$},
\\[2em]
\left|\dfrac{M\sin(\pi k / N)}{\pi}\displaystyle{\sum_{j = 0}^{N/M - 1}}T_{3,j}e^{-2\pi i jk/N}\right|, & \mbox{if $k \equiv 0 \bmod M$},
\\[1em]
0, & \mbox{otherwise}.
\end{cases}
\end{equation}

\end{lema}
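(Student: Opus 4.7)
The strategy is to exploit the $M$-fold rotational invariance that the regular polygon bequeaths to $\T(\cdot,t)$ at every $t$. Since rotation by $2\pi/M$ about $\ez$ is preserved by VFE, the tangent vector at $t_{pq}$ satisfies $\T(s+2\pi/M,t_{pq}) = R_{2\pi/M}\T(s,t_{pq})$. In terms of $Z = T_1 + iT_2$ and $T_3$ this reads $Z(s+2\pi/M) = e^{2\pi i/M}Z(s)$ and $T_3(s+2\pi/M) = T_3(s)$. Passing to the piecewise-constant discretization $s_j$ (of spacing $2\pi/N$), these become the discrete quasi-periodicities
$$ Z_{j+N/M} = e^{2\pi i/M}Z_j, \qquad \bar Z_{j+N/M} = e^{-2\pi i/M}\bar Z_j, \qquad T_{3,j+N/M} = T_{3,j}. $$

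The heart of the argument is then to split each full DFT over $j=0,\ldots,N-1$ into $M$ consecutive blocks of length $N/M$. Using the orthogonality identity $\sum_{m=0}^{M-1} e^{-2\pi i m\ell/M} = M$ when $\ell\equiv 0\bmod M$ and $0$ otherwise, together with the quasi-periodicities above, the full DFTs of $Z_j$, $\bar Z_j$ and $T_{3,j}$ vanish except in the residue classes $k\equiv 1,\ -1,\ 0 \bmod M$, respectively; and when non-vanishing, each full DFT reduces to $M$ times the partial sum $\sum_{j=0}^{N/M-1}(\cdot)_j\, e^{-2\pi i jk/N}$. Writing $T_1 = (Z+\bar Z)/2$, $T_2 = (Z-\bar Z)/(2i)$, this immediately delivers the trivial branch of the lemma: when $k$ lies in none of the three residue classes, all three partial-sum factors vanish and $|\widehat{\Ts}(k)|=0$.

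It remains to assemble the Euclidean norm in the three non-trivial cases. For $k\equiv 0\bmod M$ only $\widehat{T_{3,s}}(k)$ survives, and inserting the reduced sum into \eqref{e:hatTsk} produces the second branch directly. For $k\equiv 1\bmod M$ only the $Z$-contribution remains, so that $\widehat{T_{1,s}}(k) = \tfrac12\widehat{Z_s}(k)$ and $\widehat{T_{2,s}}(k) = \tfrac{1}{2i}\widehat{Z_s}(k)$; the Pythagorean sum yields $|\widehat{\Ts}(k)|^2 = \tfrac12|\widehat{Z_s}(k)|^2$, whence the $1/\sqrt 2$ prefactor in the first branch. The case $k\equiv -1\bmod M$ is completely analogous, but with $\bar Z_j$ in place of $Z_j$.

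The main obstacle I foresee is matching this $\bar Z$-expression to the $Z$-expression actually displayed in the lemma when $k\equiv -1\bmod M$. I would resolve it by invoking the reality of $\T$, which gives $|\widehat{\Ts}(k)| = |\widehat{\Ts}(-k)|$: since $-k \equiv 1\bmod M$ in that case, the already-established first branch applied at $-k$ recasts the $\bar Z$-partial sum as the $Z$-partial sum evaluated at $-k$, and one checks that this coincides with the displayed formula once the reflection symmetry of the regular $M$-gon (preserved, at the level of $|\cdot|$, by the time evolution up to a time-reversal) is used to identify the two partial sums at opposite frequencies. Apart from this bookkeeping, the rest of the proof is a purely mechanical consequence of the orthogonality relation and the quasi-periodicity of $Z_j$.
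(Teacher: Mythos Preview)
Your proposal is correct and follows essentially the same approach as the paper: both exploit the rotational quasi-periodicity $Z_{j+N/M}=e^{2\pi i/M}Z_j$ and $T_{3,j+N/M}=T_{3,j}$, split the full DFT over $j=0,\ldots,N-1$ into $M$ blocks of length $N/M$, and invoke the orthogonality relation $\sum_{l=0}^{M-1}e^{2\pi i(1-k)l/M}$ to isolate the residue classes $k\equiv 0,\pm1\bmod M$ before combining via $T_1=(Z+\bar Z)/2$, $T_2=(Z-\bar Z)/(2i)$. You are in fact slightly more explicit than the paper about the $k\equiv -1\bmod M$ bookkeeping (where the derivation naturally produces $\overline{\sum Z_j e^{-2\pi ij(-k)/N}}$ rather than $\sum Z_j e^{-2\pi ijk/N}$); the paper simply states the unified formula \eqref{e:hatT1skhatT2sk} without further comment, whereas you flag and resolve it via the reality symmetry $|\widehat{\Ts}(k)|=|\widehat{\Ts}(-k)|$.
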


\begin{proof}

Because of the symmetries, $Z_{j+N/M} = Z_je^{2\pi i/M}$, so
$$
\sum_{j = 0}^{N - 1}Z_je^{-2\pi i jk/N} = \left(\sum_{l=0}^{M-1}e^{2\pi i(1-k)l/M}\right)\sum_{j = 0}^{N/M - 1}Z_je^{-2\pi i jk/N}
=
\begin{cases}
M\displaystyle{\sum_{j = 0}^{N/M - 1}}Z_je^{-2\pi i jk/N}, & \mbox{if $k \equiv 1 \bmod M$},
\\
0, & \mbox{if $k \not\equiv 1 \bmod M$}.
\end{cases}
$$

\noindent On the other hand,
$$
\sum_{j = 0}^{N - 1}\bar Z_je^{-2\pi i jk/N} = \overline{\displaystyle{\sum_{j = 0}^{N - 1}} Z_je^{-2\pi i j(-k)/N}}
=
\begin{cases}
M\displaystyle{\overline{\sum_{j = 0}^{N/M - 1}Z(s_j)e^{-2\pi i j(-k)/N}}}, & \mbox{if $k \equiv -1 \bmod M$},
\\
0, & \mbox{if $k \not\equiv -1 \bmod M$}.
\end{cases}
$$

\noindent Therefore,
$$
\sum_{j = 0}^{N - 1}T_{1,k}e^{-2\pi i jk/N} = \displaystyle{\sum_{j = 0}^{N - 1}}\frac{Z_j + \bar Z_j}{2}e^{-2\pi i jk/N} =
\begin{cases}
\dfrac{M}2\displaystyle{\sum_{j = 0}^{N/M - 1}}Z_je^{-2\pi i jk/N}, & \mbox{if $k \equiv 1 \bmod M$},
\\[1.5em]
\dfrac{M}2\displaystyle{\overline{\sum_{j = 0}^{N/M - 1}Z_je^{-2\pi i j(-k)/N}}}, & \mbox{if $k \equiv -1 \bmod M$},	
\\[1em]
0, & \mbox{otherwise};
\end{cases}
$$

\noindent and
$$
\sum_{j = 0}^{N - 1}T_{2,k}e^{-2\pi i jk/N} = \displaystyle{\sum_{j = 0}^{N - 1}}\frac{Z_j - \bar Z_j}{2i}e^{-2\pi i jk/N} =
\begin{cases}
\dfrac{M}{2i}\displaystyle{\sum_{j = 0}^{N/M - 1}}Z_je^{-2\pi i jk/N}, & \mbox{if $k \equiv 1 \bmod M$},
\\[1.5em]
-\dfrac{M}{2i}\displaystyle{\overline{\sum_{j = 0}^{N/M - 1}Z_je^{-2\pi i j(-k)/N}}}, & \mbox{if $k \equiv -1 \bmod M$},	
\\[1em]
0, & \mbox{otherwise}.
\end{cases}
$$

\noindent Finally, from \eqref{e:hatTsk},
\begin{equation}
\label{e:hatT1skhatT2sk}
|\widehat{T_{1,s}}(k)| = |\widehat{T_{2,s}}(k)| =
\begin{cases}
\left|\dfrac{M\sin(\pi k / N)}{2\pi}\displaystyle{\sum_{j = 0}^{N/M - 1}}Z_je^{-2\pi i jk/N}\right|, & \mbox{if $k \equiv \pm1 \bmod M$},
\\[1em]
0, & \mbox{otherwise}.
\end{cases}
\end{equation}

\noindent With respect to the third component, the symmetries imply $T_{3,j+N/M} = T_{3,j}$, so reasoning similarly as before,
\begin{equation}
\label{e:hatT3sk}
|\widehat{T_{3,s}}(k)| = 
\begin{cases}
\left|\dfrac{M\sin(\pi k / N)}{\pi}\displaystyle{\sum_{j = 0}^{N/M - 1}}T_{3,j}e^{-2\pi i jk/N}\right|, & \mbox{if $k \equiv 0 \bmod M$},
\\[1em]
0, & \mbox{otherwise}.
\end{cases}
\end{equation}

\noindent Therefore, introducing \eqref{e:hatT1skhatT2sk} and \eqref{e:hatT3sk} into $|\widehat{\Ts}(k)| = (\sum_{l=1}^3|\widehat{T_{l,s}}(k)|^2)^{1/2}$, we obtain \eqref{e:normhatTsk}, which completes the proof.

\end{proof}

From Corollary \eqref{coro:khatT}, $|\widehat{\Ts}(k)|$ is $k$-periodic with period $Mq$ (for $q$ odd) or $Mq/2$ (for $q$ even). Furthermore, using symmetry arguments as in \cite{HozVega2014}, the computation of $|\widehat{\Ts}(k)|$ is basically reduced to two discrete Fourier transforms of $q$ elements. For instance, when $q$ is odd:
\begin{equation}
\label{e:TsMk1}
|\widehat{\Ts}(Mk + 1)|2 = |\widehat{\Ts}(q - (Mk + 1))| = \left|\dfrac{M\sin(\pi(k/q+1/N))}{\sqrt2\pi}\displaystyle{\sum_{j = 0}^{q - 1}}[e^{-2\pi ij/N}Z_j]e^{-2\pi i jk/q}\right|,
\end{equation}

\noindent and
\begin{equation}
\label{e:TsMk}
|\widehat{\Ts}(Mk)| = \left|\dfrac{M\sin(\pi k / q)}{\pi}\displaystyle{\sum_{j = 0}^{q - 1}}T_{3,j}e^{-2\pi i jk/q}\right|,
\end{equation}

\noindent for $k\in\{0, \ldots, q-1\}$. When $q$ is even, it is enough to substitute $q$ by $q/2$ in \eqref{e:TsMk1} and \eqref{e:TsMk}.

From now on, because of \eqref{e:hatT1skhatT2sk}, we will refer only to the first and third components of $\widehat{\Ts}(k)$, but everything said about the first component will be automatically valid for the second one. All the numerical experiments in this section correspond to $M = 3$. In order to study the growth of the maximum of \eqref{e:normhatTsk}, we have taken systematically $q$ equal to a prime number multiplied by two. Indeed, the conclusions that we draw in this section seem to be independent from the choice of $q$. Therefore, by chosing the double of a prime number, we ensure consistency in the comparisons for the different values of $p$, because, for any $\tpq$ with $p\not\equiv0\bmod q$, $\X(s, \tpq)$ has the same number of sides, i.e., $Mq/2$.

In Figure \ref{f:maxhatTsq200006}, we have plotted, as functions of $\tpq$, the maximum of \eqref{e:TsMk1}, i.e., $\max_k|\widehat{\Ts}(Mk+1, \tpq)| = \sqrt2\max_k|\widehat{T_{1,s}}(k, \tpq)|$  (left-hand side); the maximum of \eqref{e:TsMk}, i.e., $\max_k|\widehat{\Ts}(Mk, \tpq)| = \max_k|\widehat{T_{3,s}}(k, \tpq)|$ (center); and the maximum of \eqref{e:TsMk1} and \eqref{e:TsMk}, which is precisely $\|\widehat{\Ts}(t_{pq})\|_\infty$ (right-hand side). In all cases, $\tpq$ is such that $q = 2\times 100003 = 200006$, and $p\in\{1,\ldots,\lfloor q/4\rfloor\}$, so $p/q\in(0,1/4)$. Indeed, because of the symmetries of the problem, $\|\widehat{\Ts}(\pm t)\|_\infty = \|\widehat{\Ts}(t_{1,4}\pm t)\|_\infty$. From now on, we will use systematically $\|\cdot\|_\infty$, in order to denote the maximum of $|\cdot|$ over all $k$.

\begin{figure}[!htb]
	\centering
	\includegraphics[width=0.325\textwidth, clip=true]{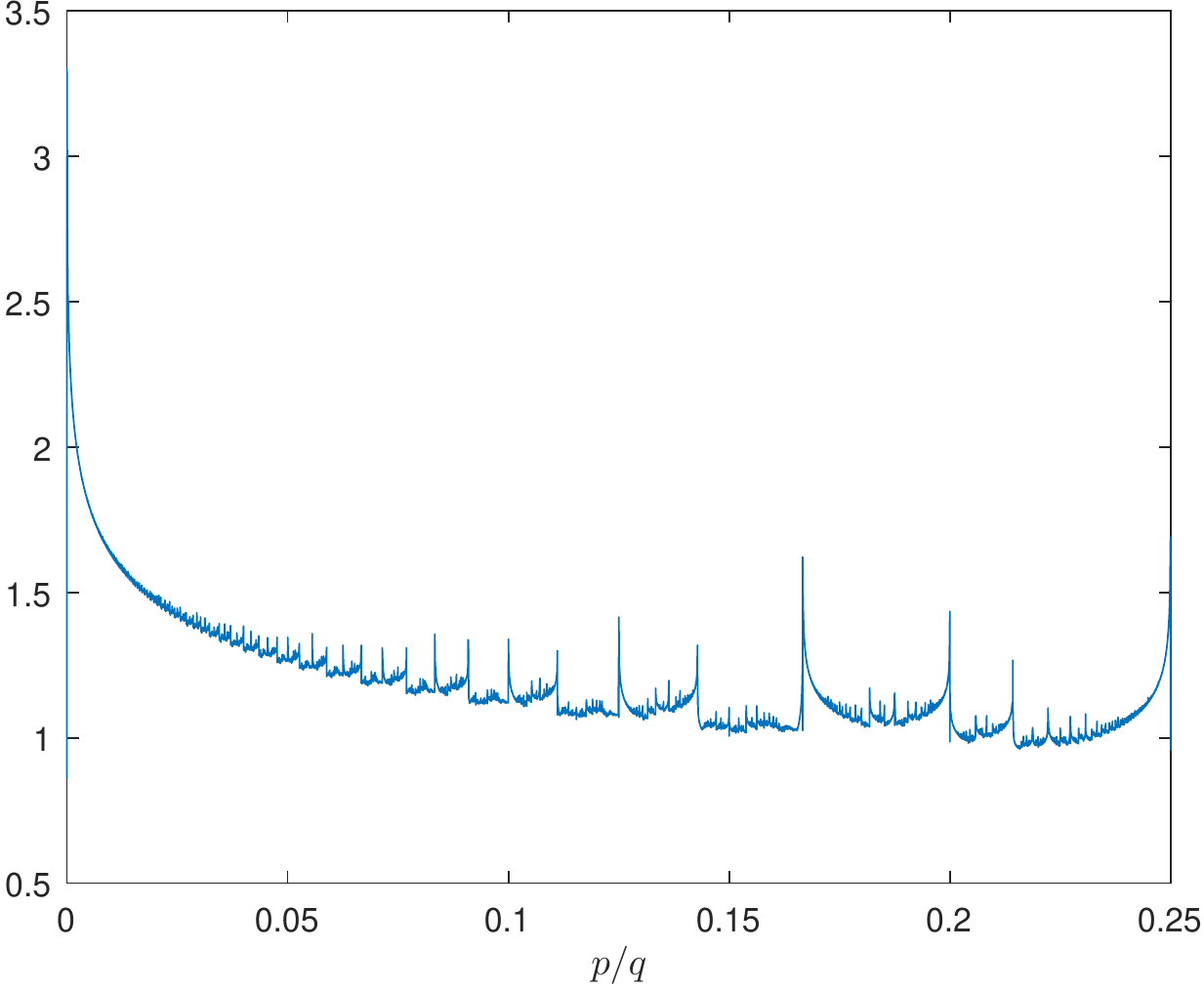}
	\includegraphics[width=0.325\textwidth, clip=true]{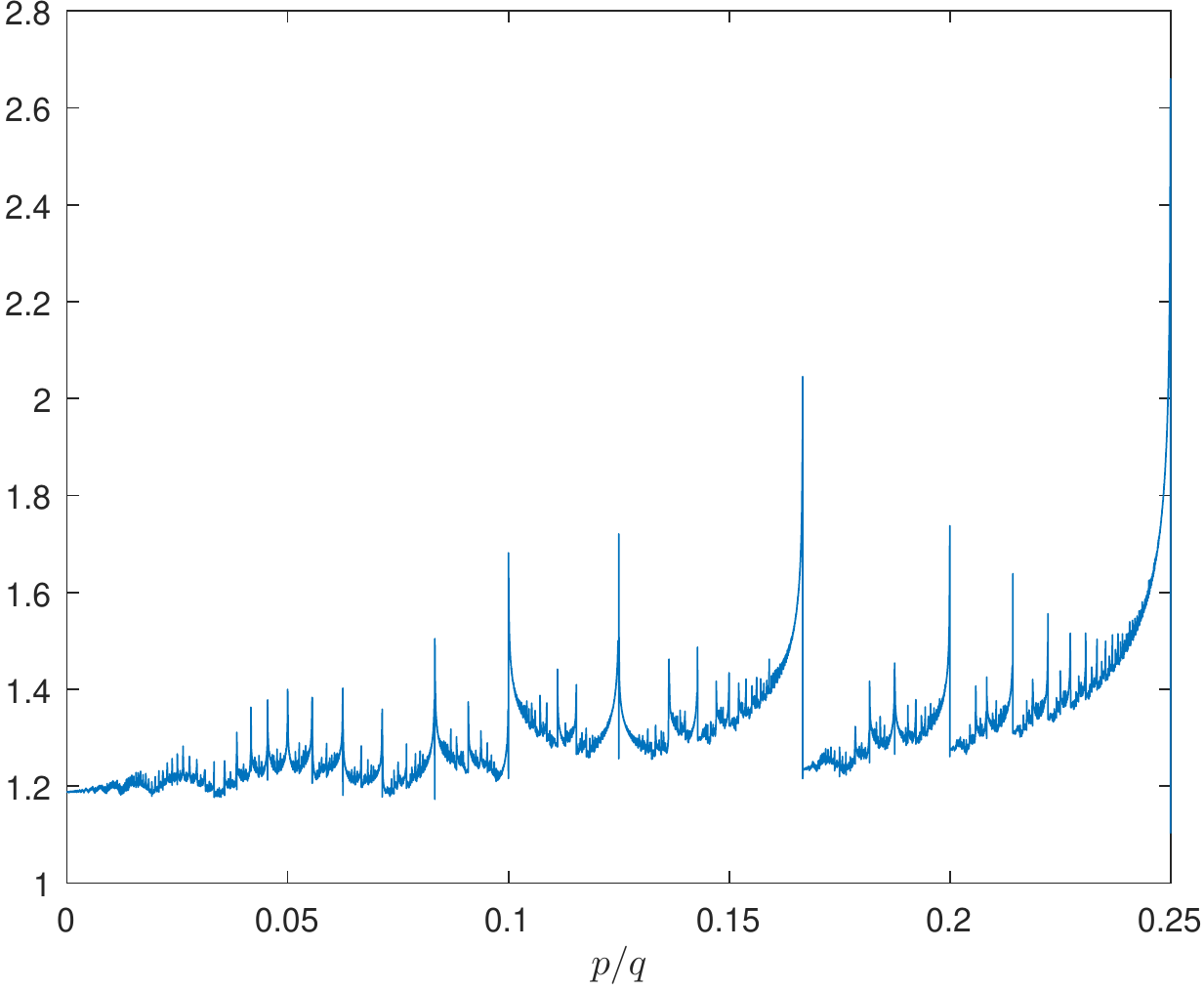}
	\includegraphics[width=0.325\textwidth, clip=true]{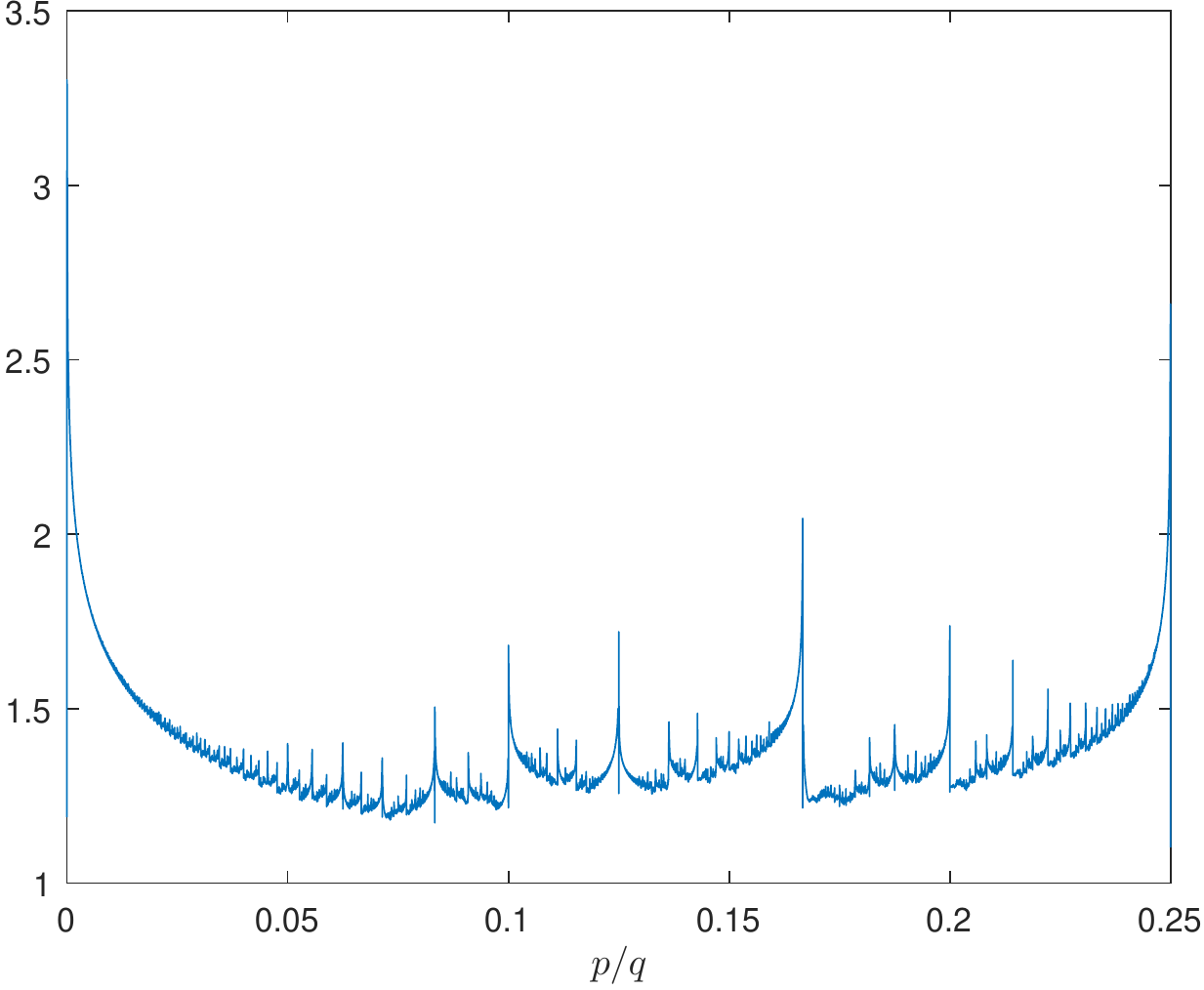}
	\caption{Left: maximum of \eqref{e:TsMk1}. Center: maximum of \eqref{e:TsMk}. Right: $\|\widehat{\Ts}(t_{pq})\|_\infty$, which is given precisely by taking the maximum of the other two curves at every $\tpq$. In all cases, $q = 2\times 100003 = 200006$, $p\in\{1,\ldots,50001\}$, so $p/q\in(0,1/4)$. Although the three curves appear to be discontinous everywhere, we have joined the consecutive points with line segments, so that the jumps are better appreciated.}\label{f:maxhatTsq200006}
\end{figure}

The curves in Figure \ref{f:maxhatTsq200006} appear to be discontinuous everywhere, and the jumps seem to be higher when $p/q$ is close to a rational $a/b$, $\gcd(a,b)=1$, with a smaller denominator $b$. Thus, we can identify immediately the rationals $1/4$, $1/5$, $1/6$, etc. On the other hand, the behavior of $\sqrt2\|\widehat{T_{1,s}}(\tpq)\|_\infty$ and $\|\widehat{T_{3,s}}(\tpq)\|_\infty$ is quite opposite; in the former case, the global maximum is reached as $\tpq$ approaches $0^+$; whereas, in the latter case, it is reached as $\tpq$ approaches $t_{1,4}^-$. Therefore, since both cases are of similar order of magnitude, and they are uncoupled from \eqref{e:normhatTsk}, we can study them separately.

We have analyzed $9591$ different values of $q$; more precisely, $q\in\{10, 14, \ldots, 199982, 200006\}$, where $q/2$ is a prime number. For each $q$, we have computed $\sqrt2\max_{\tpq}\|\widehat{T_{1,s}}(\tpq)\|_\infty$ and $\max_{\tpq}\|\widehat{T_{3,s}}(\tpq)\|_\infty$, where $\tpq$ is such that $p\in\{1, \ldots, \lfloor q/4\rfloor\}$. As in Figure \ref{f:maxhatTsq200006}, the global maximum of $\sqrt2\|\widehat{T_{1,s}}(\tpq)\|_\infty$ is always reached for $\tpq$ close to $0^+$; whereas the global maximum of $\|\widehat{T_{3,s}}(\tpq)\|_\infty$ is reached at some $\tpq$ close to $t_{1,4}^-$. Therefore, for each $q$, we can restrict the search of the global maxima to a small subset of all the possible values of $p$, which has a huge impact from a computational point of view. In fact, in the case of the first component, except for a few small values of $q$ (the largest one being $q = 158$), the maximum is always reached when $p = 2$, i.e., at $\tpq = t_{2,q}$. On the other hand, in the case of the third component, the maximum is reached when $p = \lfloor q/4\rfloor - 1$, $p = \lfloor q/4\rfloor - 2$, or $p = \lfloor q/4\rfloor - 3$, in practically all the cases.

In Figure \ref{f:globalmax}, we plot on the left-hand side $\sqrt2\max_{\tpq}\|\widehat{T_{1,s}}(\tpq)\|_\infty$ (in blue) and $\max_{\tpq}\|\widehat{T_{3,s}}(\tpq)\|_\infty$ (in red), as functions of $q$. The most surprising fact is that the values $\sqrt2\max_{\tpq}\|\widehat{T_{1,s}}(\tpq)\|_\infty$ form a single curve; whereas the values of $\max_{\tpq}\|\widehat{T_{3,s}}(\tpq)\|_\infty$ are distributed in four different curves. This much more complex character of the third component is due to the fact that $\X(s, 0)$ is planar and has only three corners, while $\X(s, t_{1,4})$ is skew and has six corners. On the other hand, all the curves have a marked logarithmic character. Indeed, in the center of Figure \ref{f:globalmax}, we have replotted the five curves in semilogarithmic scale. In the case of $\sqrt2\max_{\tpq}\|\widehat{T_{1,s}}(\tpq)\|_\infty$, except for the smallest values of $q$, the points are aligned forming what resembles a sharp straight line; whereas, in the case of  $\max_{\tpq}\|\widehat{T_{3,s}}(\tpq)\|_\infty$, we have what resembles four sharp straight lines, at least from $q\approx 10^4$ (in the worst case). 

\begin{figure}[!htb]
	\centering
	\includegraphics[width=0.325\textwidth, clip=true]{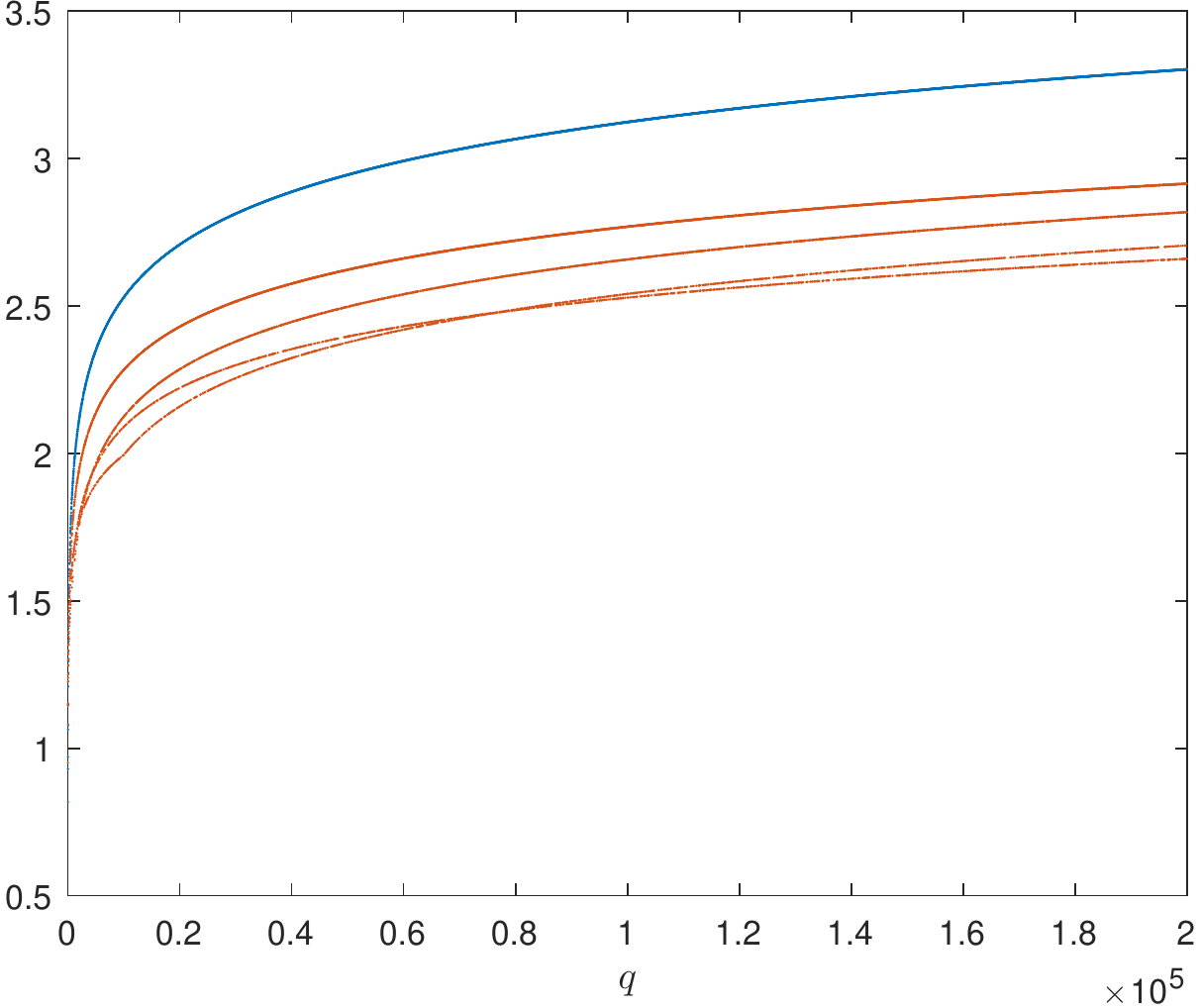}
	\includegraphics[width=0.325\textwidth, clip=true]{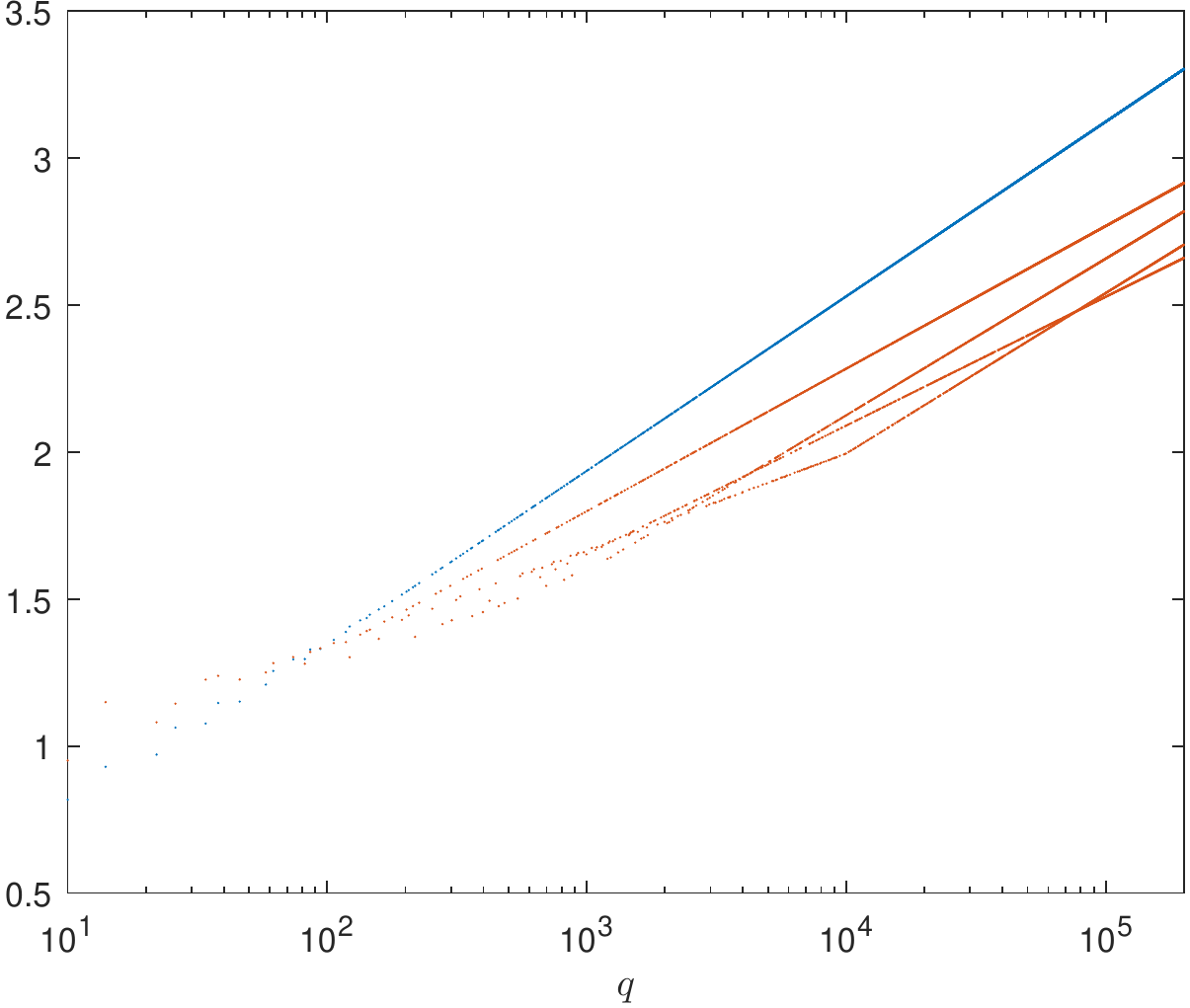}
	\includegraphics[width=0.325\textwidth, clip=true]{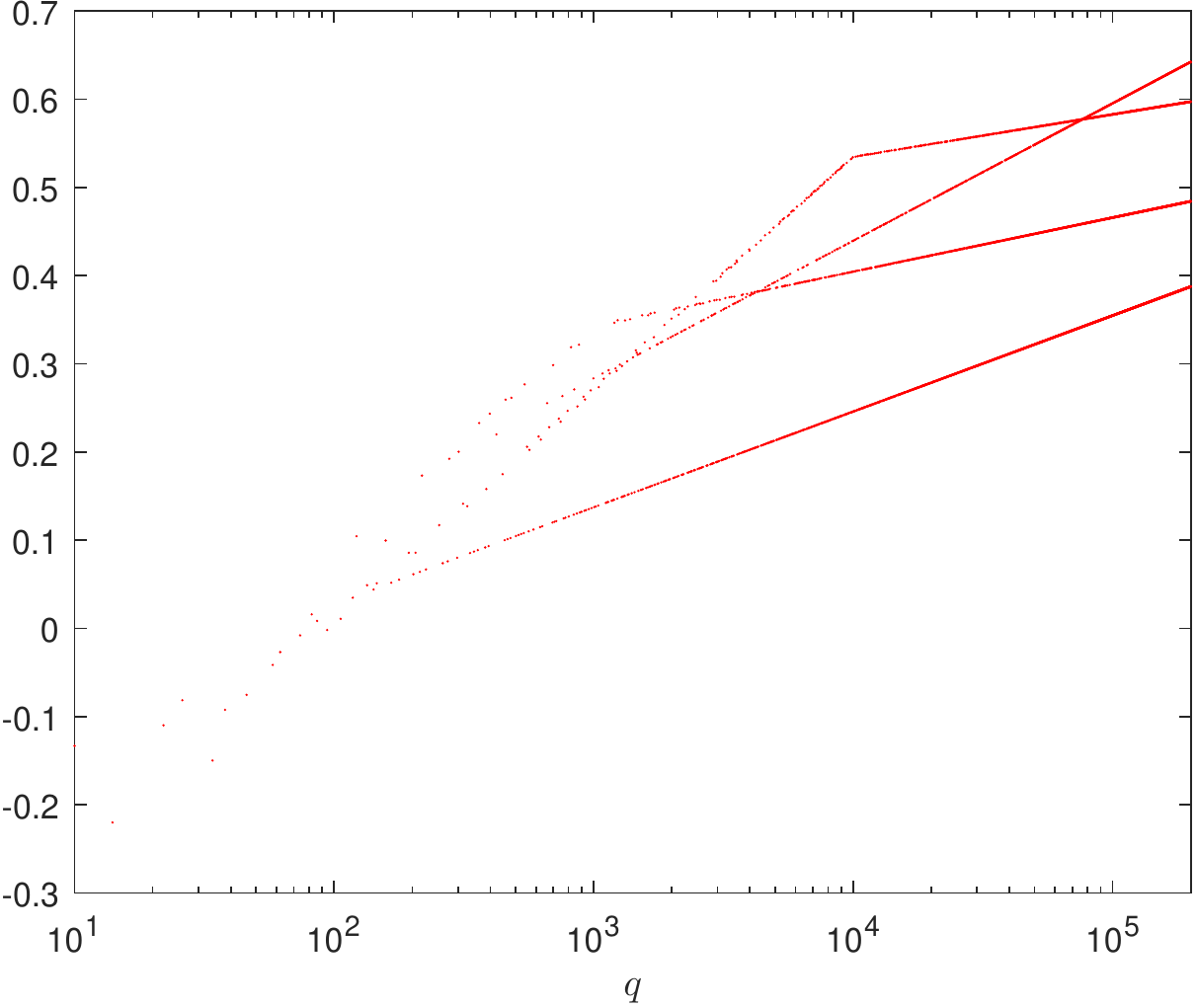}
	\caption{Left: $\sqrt2\max_{\tpq}\|\widehat{T_{1,s}}(\tpq)\|_\infty$ (in blue) and $\max_{\tpq}\|\widehat{T_{3,s}}(\tpq)\|_\infty$ (in red), as functions of $q$. Center: the same curves from the left-hand side plotted in semilogarithmic scale. Right: $\sqrt2\max_{\tpq}\|\widehat{T_{1,s}}(\tpq)\|_\infty - \max_{\tpq}\|\widehat{T_{3,s}}(\tpq)\|_\infty$, plotted in semilogarithmic scale.}\label{f:globalmax}
\end{figure}

The left-hand side and center of Figure \ref{f:globalmax} also show that $\sqrt2\max_{\tpq}\|\widehat{T_{1,s}}(\tpq)\|_\infty > \max_{\tpq}\|\widehat{T_{3,s}}(\tpq)\|_\infty$ for all $\tpq$; moreover, $\sqrt2\max_{\tpq}\|\widehat{T_{1,s}}(\tpq)\|_\infty$ grows more quickly than $\max_{\tpq}\|\widehat{T_{3,s}}(\tpq)\|_\infty$, as can be seen on the right-hand side of Figure \ref{f:globalmax}, where we have plotted $\sqrt2\max_{\tpq}\|\widehat{T_{1,s}}(\tpq)\|_\infty - \max_{\tpq}\|\widehat{T_{3,s}}(\tpq)\|_\infty$ in semilogarithmic scale. Therefore, the conjecture that, when $M = 3$,
$$
\max_{\tpq}\|\widehat{\Ts}(\tpq)\|_\infty \equiv \sqrt2\max_{\tpq}\|\widehat{T_{1,s}}(\tpq)\|_\infty,
$$

\noindent for all $q$, can be regarded as solidly founded; so we will consider only the first component during the rest of this section.

Besides giving strong graphical evidence that the global maximum of the first component grows logarithmically with respect to $q$, it is interesting to quantify numerically how good this fitting is. We claim that $\sqrt2\max_{\tpq}\|\widehat{T_{1,s}}(\tpq)\|_\infty = a\ln(q) + b$, so, by defining $\tilde q = \ln(q)$, the computation of $a$ and $b$ is reduced to a regression line problem. However, instead of following a standard minimum square approach, we have taken a much more challenging path: to only use the information of the two largest values of $q$ used in our numerical experiments, i.e., $q = 199982$, with $\sqrt2\max_{\tpq}\|\widehat{T_{1,s}}(\tpq)\|_\infty = 3.302621251065180$, and $q = 200006$, with $\sqrt2\max_{\tpq}\|\widehat{T_{1,s}}(\tpq)\|_\infty = 3.302652216764496$; remark that the diference between both global maxima is of only $3\cdot10^{-5}$. After obtaining the values $a = 0.258039752572419$ and $b = 0.152992510344641$, we have plotted in Figure \ref{f:globalerror}, in semilogarithmic scale, $|\sqrt2\max_{\tpq}\|\widehat{T_{1,s}}(\tpq)\|_\infty - a\ln(q) - b|$ as a function of $q$. Except for the smallest values of $q$, the errors are always under $10^{-3}$; moreover, the errors steadily decrease as $q$ increases. For instance, when $q \ge 554$, the error is $9.8359\cdot10^{-4}$; when $q \ge 5578$, it is $9.9859\cdot10^{-5}$; when $q \ge 36154$, it is $9.9926\cdot10^{-6}$; when $q \ge 106598$, it is $9.967\cdot10^{-7}$; and when $q \ge 166798$, it is $9.9989\cdot10^{-8}$. The closer we are to $q = 200006$, the faster the errors decay. Therefore, in our opinion, there is concluding evidence that
\begin{equation}
\label{logg}
\max_{\tpq}\|\widehat{\Ts}(\tpq)\|_\infty = \sqrt2\max_{\tpq}\|\widehat{T_{1,s}}(\tpq)\|_\infty = a\ln(q) + b + \mathcal O(\mbox{lower-order terms}).
\end{equation}

\begin{figure}[!htb]
	\centering
	\includegraphics[width=0.5\textwidth, clip=true]{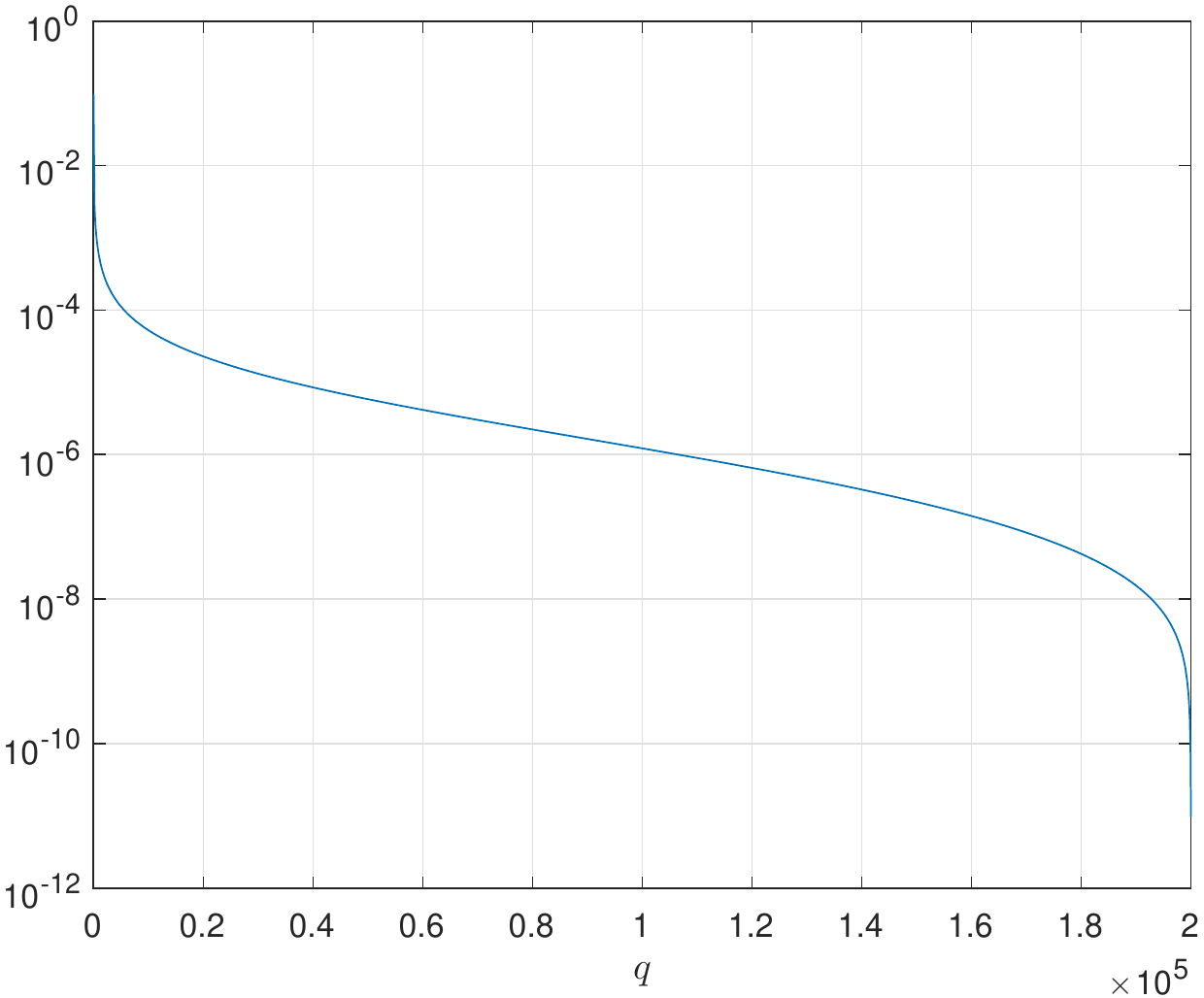}
	\caption{$|\sqrt2\max_{\tpq}\|\widehat{T_{1,s}}(\tpq)\|_\infty - a\ln(q) - b|$, for $a = 0.258039752572419$, $b = 0.152992510344641$.}\label{f:globalerror}
\end{figure}

\noindent Hence, bearing in mind that, except for some small values of $q$, $\tpq = t_{2, q} = 4\pi/(M^2q)$, we have $a\ln(q) + b = a\ln(4\pi/(M^2\tpq)) + b = -0.258039752572419\ln(\tpq) + 0.239126094505514$. Therefore, since $\|\widehat{\Ts}(-t)\|_\infty = \|\widehat{\Ts}(t)\|_\infty$, we can conjecture that
\begin{equation}
\label{e:log}
\varlimsup\limits_{t\to0}\frac{\|\widehat{\Ts}(t)\|_\infty}{-\alpha\ln|t| + \beta} = 1,
\end{equation}

\noindent where $\alpha \approx 0.258039752572419$, $\beta \approx 0.239126094505514$. In order to approximate $\alpha$ and $\beta$ with more accuracy, it is enough to repeat the previous analysis for larger values of $q$.

We have also performed some preliminary numerical tests on other rational numbers, and the jumps in Figure \ref{f:maxhatTsq200006} seem to grow again logarithmically with respect to $q$. More research is needed here. Furthermore, we have considered briefly some other values of $M$. In general, when $M > 3$, $\sqrt2\max_{\tpq}\|\widehat{T_{1,s}}(\tpq)\|_\infty$ seems to be reached always at a value of $\tpq$ close to $0^+$, but $\max_{\tpq}\|\widehat{T_{3,s}}(\tpq)\|_\infty$ can be reached at a value of $\tpq$ close to a time different from $t_{1,4}$. Besides, the graph of $\sqrt2\max_{\tpq}\|\widehat{T_{1,s}}(\tpq)\|_\infty$ with respect to $q$ is no longer a single curve, as in the left-hand side of Figure \ref{f:maxhatTsq200006}, but exhibits a complexity similar to that of the third component. Finally, $\sqrt2\max_{\tpq}\|\widehat{T_{1,s}}(\tpq)\|_\infty > \max_{\tpq}\|\widehat{T_{3,s}}(\tpq)\|_\infty$ seeems to be true only for $q > q_0(M)$, with $q_0(M)$ large enough. However, in spite of those important differences with respect to $M = 3$, the maxima of both the first and the third component seems to grow always logarithmically with respect to $q$, so, in our opinion, it does not seem unfounded at all to claim that \eqref{e:log} is also valid for all $M$. A detailed study of these facts lies beyond the scope of this paper.

Although we have only analyzed $\widehat{\T_{M,s}}$, the study of the behavior $\widehat{\X_{M,t}}$ appears to be equally interesting. However, unlike $\T_{M,s}$, $\X_{M,t}$ has to be understood in a distributional sense, so giving sense to it from a numerical point of view is pretty delicate. On the other hand, for a given $q$, it appears that $\max_k|\widehat{\X_{M,t}}(k, \tpq)|$ can be computed numerically in a consistent way, and preliminary numerical tests seem to suggest that its behavior (both quantitatively and qualitatively) is rather similar to that of $\max_k|k\, \widehat{\T_{M}}(k, \tpq)|$. Again, this topic deserves further research.

\section{Transfer of linear momentum}
\label{s:momentum}

The motivation of this section comes from some recent result proved in \cite{BanicaVega2016} about the conservation law associated to the linear momentum of solutions of \eqref{e:xt}. A simple computation proves that regular solutions of \eqref{e:xt} preserve the linear momentum
$$
\mathcal M(t) = \int_{-\infty}^{+\infty}\X(s,t)\wedge\T(s,t)ds.
$$

\noindent In \cite{Ricca}, the connection of this invariant with the so-called fluid impulse is proved. Remember that, (see \cite[p. 24]{Bertozzi-Majda}) for a 3D fluid governed by the Euler equations, with a regular vorticity $\omega$ having an appropriate decay at infinity, the fluid impulse given by
$$
\int x \wedge \omega (x, t)dx 
$$

\noindent is conserved in time. It turns out (see the appendix in \cite{BanicaVega2016}) that, in the case of the selfsimilar solution \eqref{e:selfsimilar}, the linear momentum is given by
$$
\mathcal M(t)=2c_0|t| (0,A_2,A_3),
$$ 

\noindent and, therefore, it is not preserved. Obviously, this is due to the boundary conditions that are satisfied at infinity by $\X_{c_0}$. Therefore, it seems a very natural question to try to understand what the behavior of the linear momentum density is, in the case of a regular polygon:
\begin{equation}
\label{e:lmdensity}
\mathbf\rho_M(s, t) = \X_M(s,t)\wedge\T_M(s,t)ds,
\end{equation}

\noindent for which, in that case, we have
\begin{equation}
\label{e:lm}
\mathcal {M}_M(t)=\int_0^{2\pi}\mathbf\rho_M(s,t)ds.
\end{equation}

\noindent In order to compute \eqref{e:lm} accurately, we first observe that the components of $\X_M(s,t)\wedge\T_M(s,t)$ are piecewise constant at times of the form $t_{pq}$, which is proved in the following, more general lemma.

\begin{lema} Let $\X(\alpha)$ be a skew polygon, with vertices located at $\{\alpha_j\,|\,j\in\mathbb Z\}$, and let $\T(\alpha)$ be its tangent vector. Then, $\X(\alpha)\wedge\T(\alpha)$ is constant at $\alpha\in(\alpha_j,\alpha_{j+1})$.
	
\begin{proof} In the interval $\alpha\in(\alpha_j,\alpha_{j+1})$, $\X(\alpha)$ is a line segment, whereas $\T(\alpha)$ is constant. Therefore, in the worst case, $\X(\alpha)\wedge\T(\alpha)$ will be a line segment as well, so the proof is reduced to showing that $\X(\alpha_j^+)\wedge\T(\alpha_j^+) = \X(\alpha_{j+1}^-)\wedge\T(\alpha_{j+1}^-)$. However, since $\X(\alpha)$ is continuous for all values of $\alpha$, and $\T(\alpha_j^+) \equiv \T(\alpha_{j+1}^-)$, it follows that $\X(\alpha_{j+1}^-)\wedge\T(\alpha_{j+1}^-) - \X(\alpha_j^+)\wedge\T(\alpha_j^+) = [\X(\alpha_{j+1}) - \X(\alpha_j)]\wedge\T(\alpha_j^+) = \mathbf 0$, because
\begin{equation}
\T(\alpha_j^+) \equiv \frac{\X(\alpha_{j+1}) - \X(\alpha_j)}{\|\X(\alpha_{j+1}) - \X(\alpha_j)\|}.
\end{equation}
		
\end{proof}
	
\end{lema}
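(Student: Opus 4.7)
The plan rests on two elementary observations about a skew polygon. On each open edge $(\alpha_j,\alpha_{j+1})$: (i) $\X(\alpha)$ is an affine function of $\alpha$, and (ii) the unit tangent $\T(\alpha)$ is the constant vector $\T_j$ pointing from $\X(\alpha_j)$ to $\X(\alpha_{j+1})$, normalized to unit length. Both facts are immediate from the definition of a skew polygon, since the sides are straight line segments and $\alpha$ is arc-length.

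My first approach would be to use (i) and (ii) to obtain the explicit parametrization $\X(\alpha) = \X(\alpha_j^+) + (\alpha - \alpha_j)\T_j$ on the open edge. Plugging this into the wedge and invoking bilinearity together with the antisymmetry identity $\T_j \wedge \T_j = \mathbf 0$ yields
\begin{equation*}
\X(\alpha)\wedge\T(\alpha) = \X(\alpha_j^+)\wedge\T_j + (\alpha-\alpha_j)\,\T_j\wedge\T_j = \X(\alpha_j^+)\wedge\T_j,
\end{equation*}
which depends only on $j$, proving the lemma.

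An equivalent route is to differentiate: since $\X$ and $\T$ are smooth on $(\alpha_j,\alpha_{j+1})$, the Leibniz rule for the cross product gives $(\X\wedge\T)'(\alpha) = \X'(\alpha)\wedge\T(\alpha) + \X(\alpha)\wedge\T'(\alpha) = \T_j\wedge\T_j + \X(\alpha)\wedge\mathbf 0 = \mathbf 0$, so $\X\wedge\T$ must be constant on the segment.

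There is no serious obstacle, as the statement ultimately reduces to $\mathbf v\wedge\mathbf v = \mathbf 0$. The only point that deserves a small amount of care in the write-up is that the lemma is stated on the open intervals $(\alpha_j,\alpha_{j+1})$, so no continuity of $\T$ across the vertices $\alpha_j$ is invoked; this makes the argument insensitive to the corner angles, and in particular it applies uniformly to the skew polygons $\X(s,t_{pq})$ arising from \eqref{psistpq} regardless of the values of $\rho_m$ and $\theta_m$. Consequently, \eqref{e:lmdensity} is piecewise constant in $s$ at every rational time $t_{pq}$, which is precisely what is needed to evaluate \eqref{e:lm} as a finite sum over edges.
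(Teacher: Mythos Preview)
Your proof is correct and follows essentially the same idea as the paper: both arguments exploit that on each open edge $\X$ is affine with direction $\T_j$ while $\T\equiv\T_j$, and then reduce to $\T_j\wedge\T_j=\mathbf 0$. The paper phrases this as an endpoint comparison $[\X(\alpha_{j+1})-\X(\alpha_j)]\wedge\T(\alpha_j^+)=\mathbf 0$, whereas you write out the parametrization (or differentiate) directly, but these are the same computation.
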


\noindent In particular, since $\X_M(s,t_{pq})$ is a skew polygon, it follows that $\X_M(s,t_{tpq})\wedge\T_M(s,t_{tpq})$ is constant at $s\in(s_j,s_{j+1})$, $s_j = 2\pi j/(Mq)$, $j\in\{0,\ldots, Mq-1\}$. Therefore, we can compute \eqref{e:lm} \emph{exactly} as follows:
\begin{align*}
\mathcal M(t_{pq}) & = \int_0^{2\pi}\X_M(s,t_{pq})\wedge\T_M(s,t_{pq})ds
\cr
& = \frac{2\pi}{Mq}\sum_{j = 0}^{Mq-1}\frac{\X_M(s_j^+,t_{pq})\wedge\T_M(s_j^+,t_{pq}) + \X_M(s_{j+1}^-,t_{pq})\wedge\T_M(s_{j+1}^-,t_{pq})}{2}
\cr
& = \frac{2\pi}{Mq}\sum_{j = 0}^{Mq-1}\X_M(s_j,t_{pq})\wedge\T_M(s_j^+,t_{pq}).
\end{align*}

\noindent Observe that this last formula is valid for both $q$ even and odd.

By symmetry considerations, it is easy to see that the two first components of $\mathcal M_M(t)$ are zero. Moreover, the numerical simulations show immediately that the third component remains constant in time. Therefore, to determine $\eqref{e:lm}$, it is enough to compute it at $t = 0$, which can be done explicitly. Indeed, bearing in mind that $\X_M(s_j, 0, 0) = (\pi\sin(\pi(2j-1)/M) / (M\sin(\pi/M)), -\pi\cos(\pi(2j-1)/M) / (M\sin(\pi/M)), 0)$, $\T_M(s_j^+, 0) = (\cos(2\pi j/M), \sin(2\pi j/M), 0)$, we get immediately
\begin{align*}
\mathcal {M}_M(t) & = \left(0, 0, \frac{2\pi}{M}\sum_{j=0}^{M-1}\left[\frac{\pi \sin(\pi(2j-1)/M)}{M\sin(\pi / M)}\sin(2\pi j/M) + \frac{\pi \cos(\pi(2j-1)/M)}{M\sin(\pi / M)}\cos(2\pi j/M)\right]\right)^T
\cr
& = \left(0, 0, \frac{2\pi^2}{M\tan(\pi / M)}\right)^T, \quad \forall t.
\end{align*}

\noindent On the other hand, in the previous sections, we have given very strong evidence that the behavior of the solution at time close to a rational is dictated by the one-corner problem. Therefore, the result in \cite{BanicaVega2016} suggests a chaotic behavior of the quantity
$$
\int_0^{2\pi/M}\mathbf\rho_M(s, t)ds.
$$

\noindent Reasoning as above, this quantity can be computed in the rational times \emph{exactly} as
\begin{equation}
\label{e:lmloc}
\int_0^{2\pi/M}\mathbf\rho_M(s, \tpq)ds = \frac{2\pi}{Mq}\sum_{j = 0}^{q-1}\X_M(s_j,t_{pq})\wedge\T_M(s_j^+,t_{pq}),
\end{equation}

\noindent where $s_j = 2\pi j/(Mq)$, $j\in\{0,\ldots, q-1\}$. Again, this formula is valid for both $q$ even and odd; and the numerical simulations immediately show in this case that the first and third components of \eqref{e:lmloc} remain constant, with values respectively equal to zero and $2\pi^2/(M^2\tan(\pi/M))$; whereas the second component exhibits a behavior that strongly reminds us of Riemann's nondifferentiable function \cite{Ja}:
\begin{equation}
\label{e:riemann}
\phi(x) = \sum_{n = 1}^\infty\frac{\sin(\pi n^2 x)}{n^2},
\end{equation}

\noindent and also of the trajectory described by $\X_{M}(0, t)$. This latter fact can be easily guessed by plugging in \eqref{e:lmloc} the identity
$$
\X_M(s_j,t_{pq})=\X_M(0,t_{pq})+\int_0^{s_j}\T(s,t)\,ds.
$$

\noindent In this section, given $\tpq$, we have reconstructed algebraically $\X_M(s, \tpq)$. In Figure \ref{f:momentum}, on the left-hand side, we have plotted the second component of \eqref{e:lmloc} with respect to $p / q$, for $M \in \{3, \ldots, 10\}$, $q = 2^3\times3\times5\times7\times11 = 9240$, and $p\in\{0, \ldots, q\}$; and, on the right-hand side, we have plotted $-\phi(x)$, for $x \in\{0, 10^{-3}, 2\times10^{-3}, \ldots, 1\}$, taking $n\in\{1, \ldots, 10000\}$ in \eqref{e:riemann}. Although different scaled and not identical, the curves on the left-hand side of Figure \ref{f:momentum} are strikingly similar to $-\phi(x)$.

\begin{figure}[!htb]
	\centering
	\includegraphics[width=0.49\textwidth, clip=true]{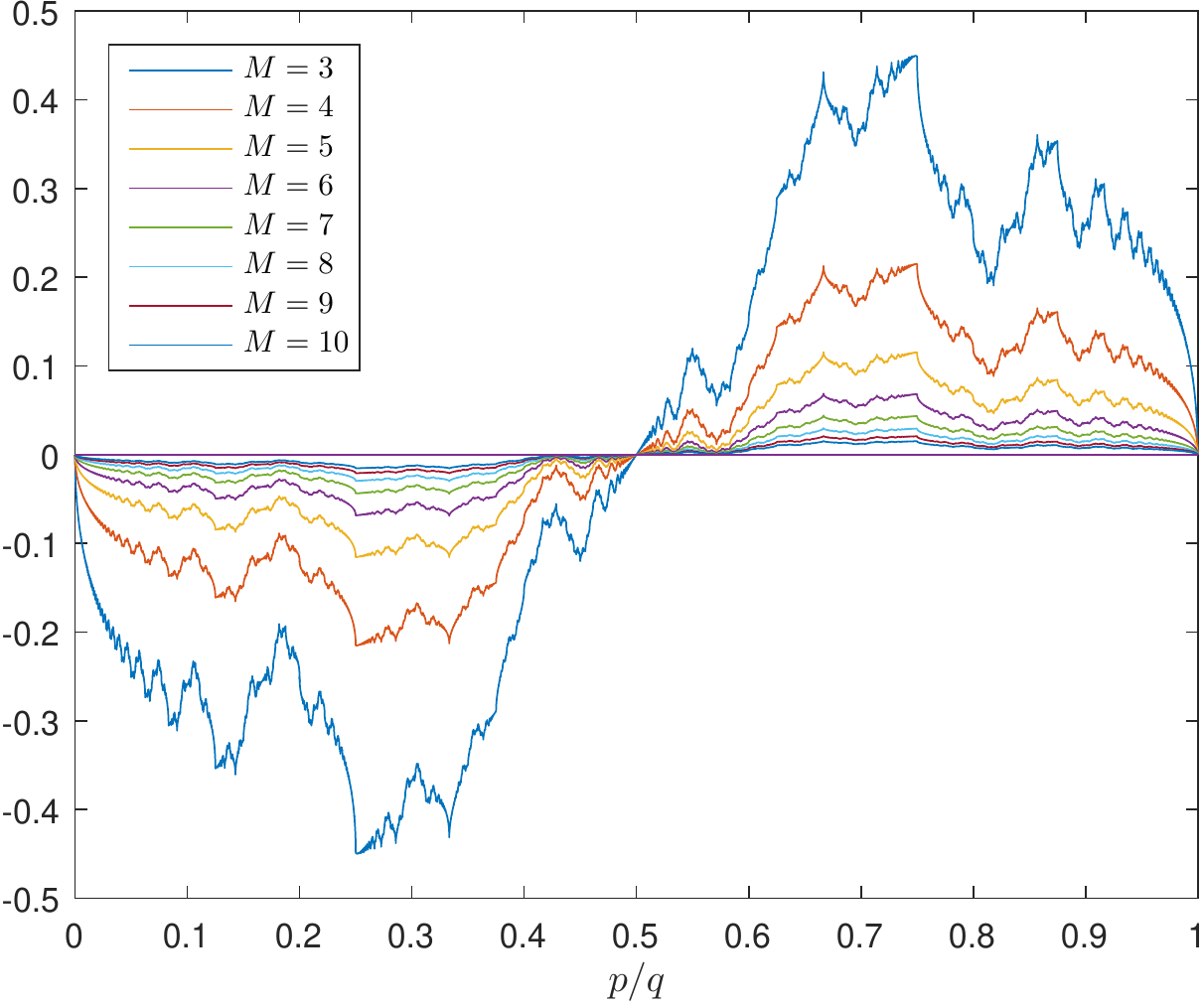}
	\includegraphics[width=0.49\textwidth, clip=true]{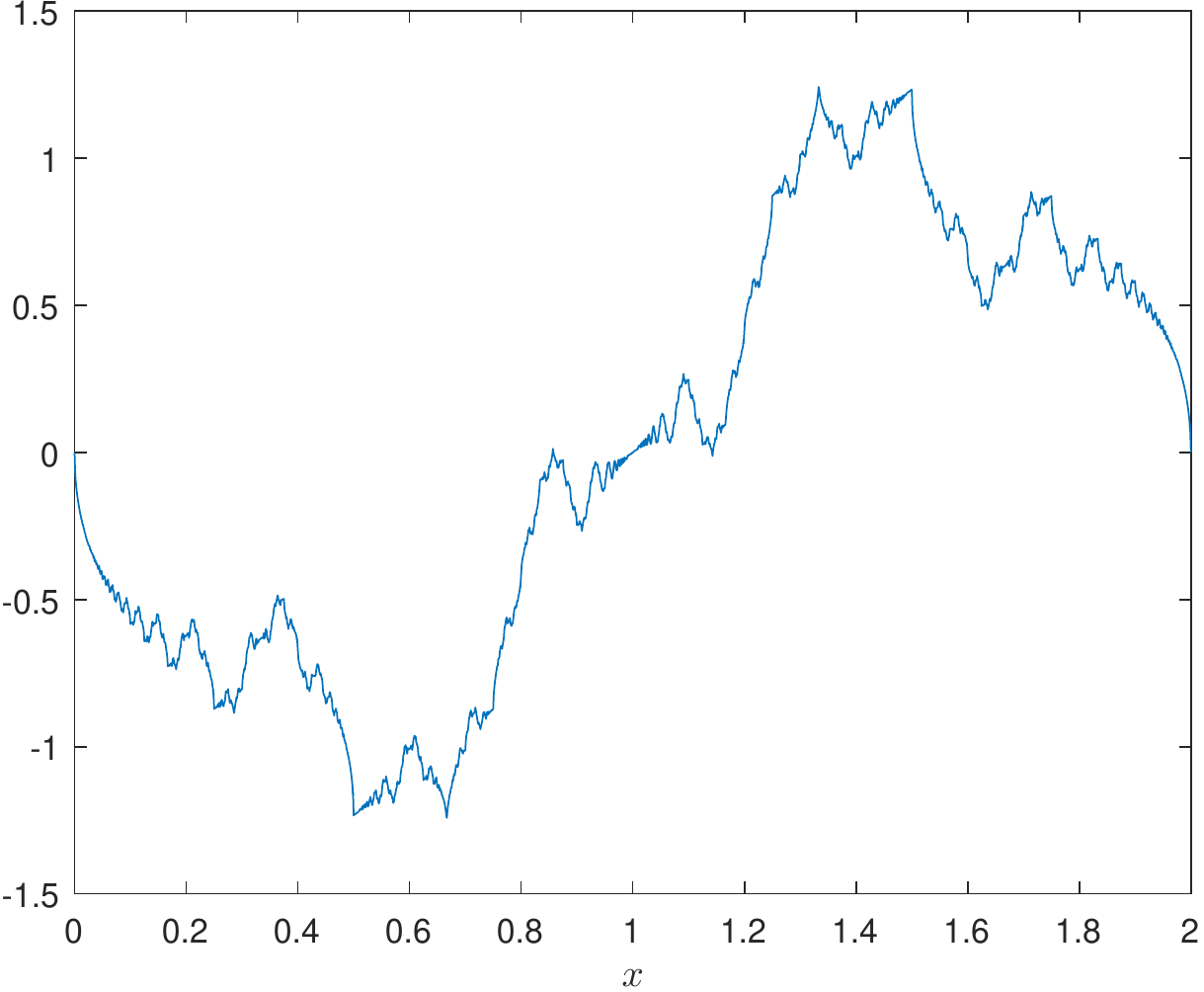}
	\caption{Left: Second component of \eqref{e:lm}, for $M \in \{3, \ldots, 10\}$, and $q = 2^3\times3\times5\times7\times11 = 9240$. We conjecture that this curve is a multifractal. Right: $-\phi(x)$, for $x \in\{0, 10^{-3}, 2\times10^{-3}, \ldots, 1\}$, where $\phi(x)$ is given by \eqref{e:riemann}, taking $n\in\{1, \ldots, 10000\}$. The curves on the left-hand side are strikingly similar to that on the right-hand side.} \label{f:momentum}
\end{figure}

In order to better understand the behavior of the second component of \eqref{e:lmloc}, we have expanded it into its sine expansion, 
\begin{equation}
\label{e:cn}
\int_0^{2\pi/M}[\X_M(s,t)\wedge\T_M(s,t)]_2ds = -\sum_{k = 1}^\infty c_k\sin(2\pi k\,t),
\end{equation}

\noindent which can be approximated by means of a discrete Fourier transform. In Figure \ref{f:fingerprintriemann}, we have plotted the approximations of $c_kk$, $k\in\{1, \ldots, 1800\}$, for $M = 3$. Although there are 1800 points, the dominating ones, marked with a star, are exactly those of the form $c_{n^2}n^2$, $n\in\{1, \ldots, 42\}$; this and the fact that these 42 values do not deviate largerly from a constant, shed light on the connection between \eqref{e:cn} and \eqref{e:riemann}. This topic deserves further research.
\begin{figure}[!htb]
	\centering
	\includegraphics[width=0.50\textwidth, clip=true]{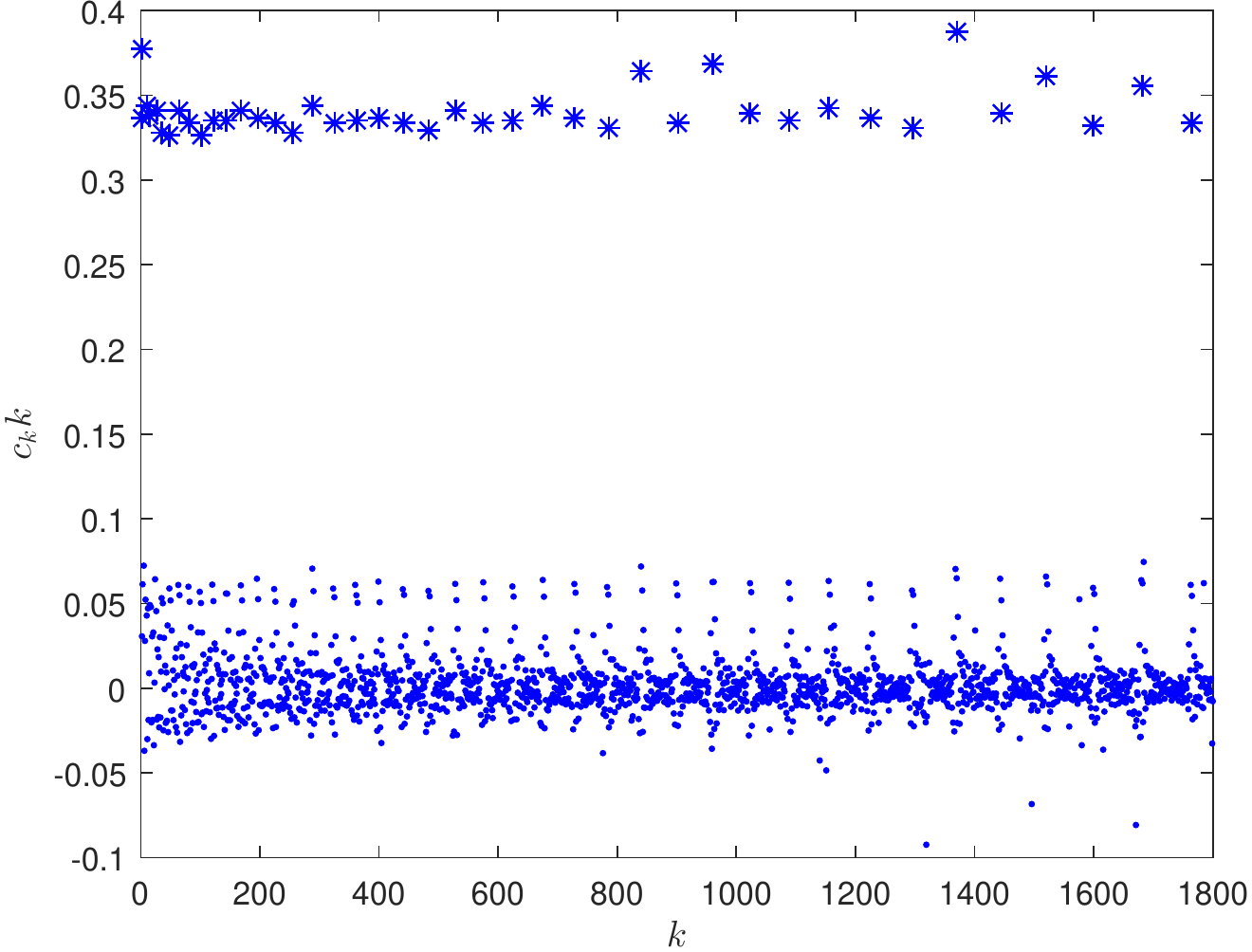}
	\caption{Approximations of $c_kk$, $k\in\{1, \ldots, 1800\}$, as a function of $k$, for $M = 3$. Although there are 1800 points, the dominating ones, marked with a star, are exactly those of the form $c_{n^2}n^2$, $n\in\{1, \ldots, 42\}$. This is in agreement with \eqref{e:riemann}.} \label{f:fingerprintriemann}
\end{figure}

\section{A couple of observations about more general polygons}

\label{s:observations}

Even if a complete study of the evolution of VFE for arbitrary polygons lies beyond the scope of this paper, we would like to point out a couple of observations that can be useful in order to extend the results in this article. The first one concerns the deduction and generalization of \eqref{e:cosrho}, which is equivalent to\begin{equation}
\label{e:cosrho3}
\cos^M\left(\frac{2\pi / M}{2}\right) =
\begin{cases}
\cos^{Mq}(\rho/2), & \mbox{if $q\equiv1\bmod2$},
 \cr
\cos^{Mq/2}(\rho/2), & \mbox{if $q\equiv0\bmod2$}.
\end{cases}
\end{equation}

\noindent At this point, bearing in mind that $2\pi/M$ is precisely the angle between two adjacent sides at time $t = 0$ and $t = t_{1,2}$, it follows that \eqref{e:cosrho3}, and hence \eqref{e:cosrho}, can be regarded as a consequence of
\begin{equation}
\label{e:cosrho4}
\prod_m \cos\left(\frac{\rho_m(t_{p,q})}{2}\right) = \mbox{constant}, \quad m \in
\{0, \ldots, \mathtt{number\_of\_sides} - 1 \},
\end{equation}

\noindent i.e., in a regular polygon, the product over a period $s\in[0,2\pi)$ of the cosines of the halves of the angles between adjacent sides is a constant. Note that this is a more unifying statement, because there is no more need to distinguish between even and odd values of $q$. Moreover, one wonders immediately whether \eqref{e:cosrho4} holds for any arbitrary polygon. We have performed some numerical experiments and the answer seems to be in the positive. For instance, we have computed the numerical evolution of VFE for an irregular planar quadrilateral whose tangent vector is given by
\begin{equation}
\label{e:irregularfourT0}
\T(s, 0) =
\begin{cases}
\T_0\equiv(1, 0, 0)^T, & s\in[0, 12\cdot\frac{2\pi}{32}),
 \\
\T_1\equiv(-12/13, 5/13, 0)^T, & s\in[12\cdot\frac{2\pi}{32}, 25\cdot\frac{2\pi}{32}),
 \\
\T_2\equiv(-4/5, -3/5, 0)^T, & s\in[25\cdot\frac{2\pi}{32}, 28\cdot\frac{2\pi}{32}),
 \\
\T_3\equiv(3/5, -4/5, 0)^T, & s\in[28\cdot\frac{2\pi}{32}, 2\pi),
\end{cases}
\end{equation}

\noindent i.e., the total length is $2\pi$, and the sides are proportional to $3$, $4$, $12$ and $13$, respectively. We have taken $N = 2^{11}\cdot3\cdot5 = 30720$ and $\Delta t = (2^{17}\cdot 3^4\cdot5^2)^{-1}\pi = \pi / 265420800$. Then, at $t = \pi / 32$, and $t = \pi / 16$, we have clearly skew polygons with exactly 32 equally-spaced sides, as is shown in Figures \ref{f:irregularfourX} and \ref{f:irregularfourT}.
\begin{figure}[!htb]
\centering
\includegraphics[width=0.5\textwidth, clip=true]{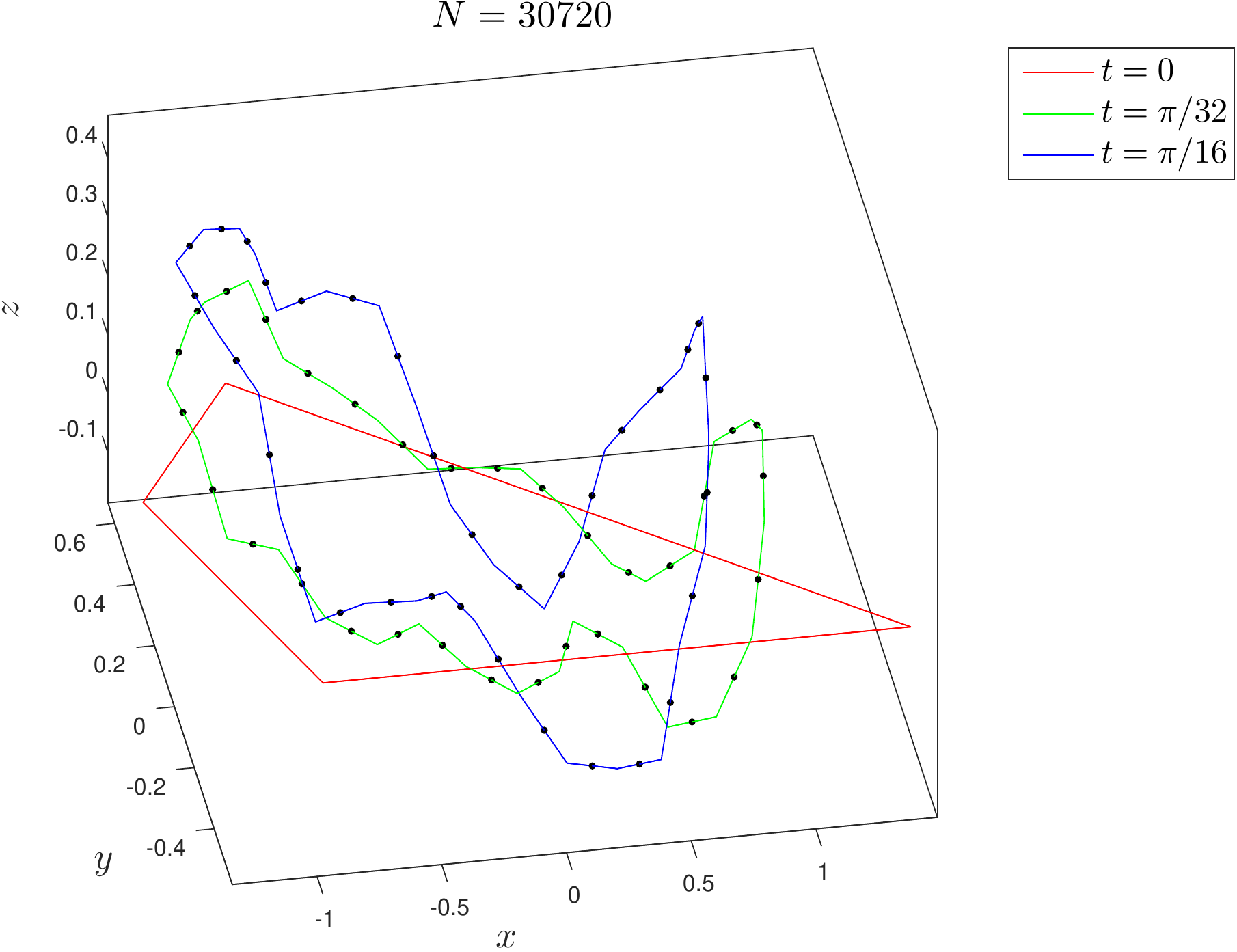}
\caption{Plots of $\X$ at $t = 0$ (irregular planar quadrilateral), and $t = \pi / 16$ and $t = \pi / 32$. At the two latter times, we have skew polygons with 32 equally-spaced sides; to ease the counting process, the middle points of the sides are indicated with a small black dot.} \label{f:irregularfourX}
\end{figure}

\begin{figure}[!htb]
\centering
\includegraphics[width=0.49\textwidth, clip=true]{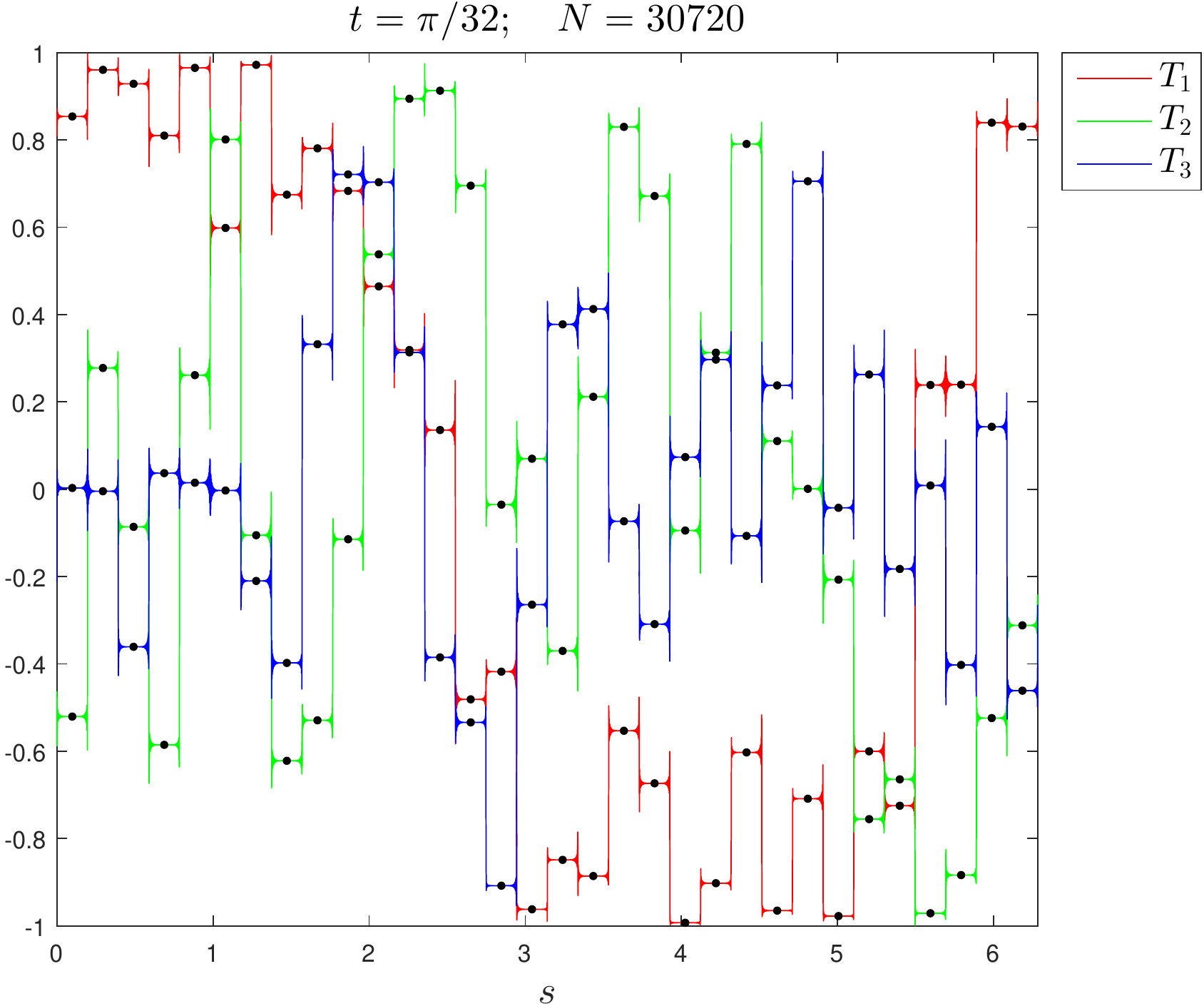}
\includegraphics[width=0.49\textwidth, clip=true]{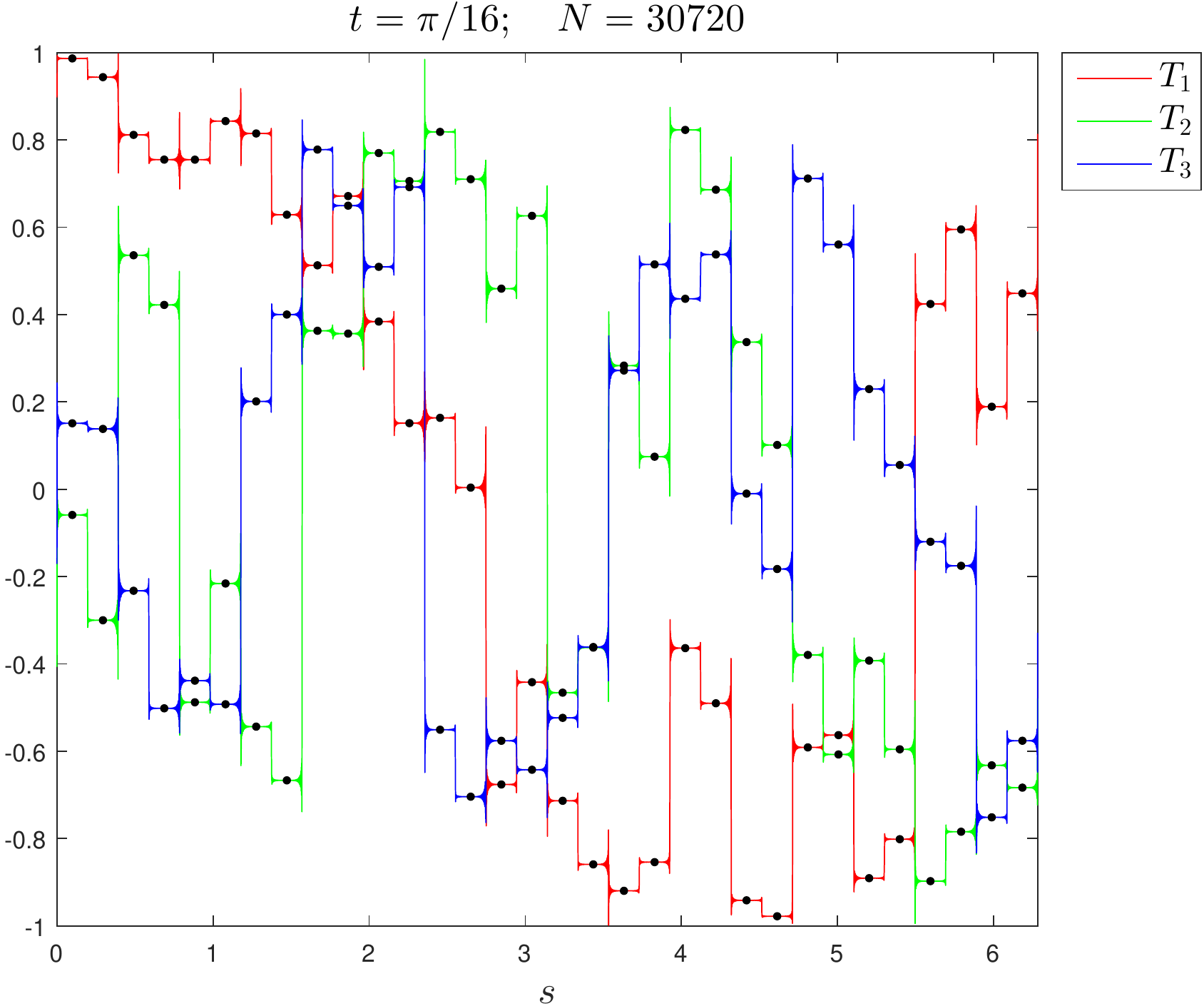}
\caption{Plots of the three components of $\T$ at $t = \pi / 32$ (left) and $t = \pi / 16$ (right), which clearly show the appearance of 32 equally spaced sides; to ease the counting process, the middle points of the sides are indicated with a small black dot. As in \cite[Figure 1]{HozVega2014}, the Gibbs phenomenon is present.}\label{f:irregularfourT}
\end{figure}

In order to test whether \eqref{e:cosrho4} holds for this example, we have to compute
\begin{equation}
\label{e:cosrho5}
P(t) \equiv \prod_m \cos\left(\frac{\rho_m(t)}{2}\right) = \left[\prod_m \frac{1 + \T_m(t)\cdot\T_{m+1}(t)}{2}\right]^{1/2},\quad m \in
\{0, \ldots, \mathtt{number\_of\_sides} - 1 \},
\end{equation}

\noindent where $t$ is a fractional multiple of $\pi$, i.e., a time when $\X$ exhibits the shape of a skew polygon. When $t = 0$, $P(0) = 7/65$ can be explicitly calculated from \eqref{e:irregularfourT0}. For other $t$, due to the Gibbs phenomenon, we choose $\T_m$, for a given side, to be equal to the mean of the inner half of the numerical values of the tangent vector at that side; for instance, in $s\in[0, 2\pi/32)$, $\T(s) = \T_0 \equiv \mean_{s\in[2\pi/128, 6\pi/128)}\T(s)$, etc. Following this procedure, we find that $|P(\pi / 32) - 7/65| = 5.5542\cdot10^{-8}$ and $|P(\pi / 16) - 7/65| = 1.1855\cdot10^{-6}$. In Table \ref{t:irregularfourerror}, we have computed $P(t)$ for this example for more times, obtaining quite satisfying results as well. A more careful computation of $\T_m$ may further improve the results.
\begin{table}[htb!]
\centering
\begin{tabular}{|c|c|c||c|c|c|}
\hline $t$ & No. of sides & $|P(t) - 7/65|$ & $t$ & No. of sides & $|P(t) - 7/65|$
\\
 \hline
$\pi/160$ & $160$ & $1.1865\cdot10^{-4}$ & $3\pi/80$ & $160$ & $1.2693\cdot10^{-4}$
\\
 \hline
$\pi/128$ & $128$ & $7.4416\cdot10^{-5}$ & $5\pi/128$ & $128$ & $7.0374\cdot10^{-6}$
\\
 \hline
$\pi/96$ & $96$ & $9.2262\cdot10^{-7}$ & $\pi/24$ & $96$ & $9.1060\cdot10^{-5}$
\\
 \hline
$\pi/80$ & $160$ & $2.1636\cdot10^{-4}$ & $7\pi/160$ & $160$ & $7.1699\cdot10^{-5}$
\\
 \hline
$\pi/64$ & $64$ & $1.3437\cdot10^{-5}$ & $3\pi/64$ & $64$ & $1.4202\cdot10^{-5}$
\\
 \hline
$3\pi/160$ & $160$ & $1.1259\cdot10^{-4}$ & $\pi/20$ & $160$ & $6.2725\cdot10^{-5}$
\\
 \hline
$\pi/48$ & $96$ & $4.5971\cdot10^{-5}$ & $5\pi/96$ & $96$ & $7.4601\cdot10^{-7}$
\\
 \hline
$3\pi/128$ & $128$ & $1.4293\cdot10^{-4}$ & $7\pi/128$ & $128$ & $1.3114\cdot10^{-4}$
\\
 \hline
$\pi/40$ & $160$ & $5.1102\cdot10^{-5}$ & $9\pi/160$ & $160$ & $7.8719\cdot10^{-5}$
\\
 \hline
$\pi/32$ & $32$ & $5.5542\cdot10^{-8}$ & $\pi/16$ & $32$ & $1.1855\cdot10^{-6}$
\\
\hline
\end{tabular}
\caption{Test of \eqref{e:cosrho5}, with $\T(s, 0)$ given by \eqref{e:irregularfourT0}. As explained, in order to compute $P(t)$, we have chosen each $\T_m$ to be equal to the mean of the inner half of the numerical values of the tangent vector at that side. The results are quite satisfying.
}\label{t:irregularfourerror}
\end{table}

Obviously, the previous arguments do not constitute a proof, not even a numerical one. However, they tell us that it is very reasonable to conjecture \eqref{e:cosrho5} to be valid for any arbitrary polygon. Moreover, if $P(t)$ is a conserved quantity, so is $\log(P(t))$, i.e.,
\begin{equation*}
\log(P(t)) = \frac{1}{2}\sum_m\log\left(\frac{1 + \T_m(t)\cdot\T_{m+1}(t)}{2}\right),
\end{equation*}

\noindent which is in agreement with \cite{Ishimori1982} and \cite{Lakshmanan2011}. This deserves further study.

The second observation is that our claiming that the corners do not \textit{see} one another at infinitesimal times is valid for nonregular polygons as well. In Figure \ref{f:comparison}, we have plotted simultaneously, at $t = (2^8\cdot3^4\cdot5)^{-1}\pi = \pi / 103680$, $\T_{num}$ (black) corresponding to \eqref{e:irregularfourT0}, with $N = 30720$, and $\T_{rot}$ (thick red) corresponding to the corner at $s = 3\cdot2\pi/32$, i.e., with inner angle $\theta = \pi - \arccos(-12/13)$ in \eqref{e:thetaA1}. As in Figure \ref{f:comparison}, except for the thicker stroke, the red curve is visually undistinguishable from the black one.
\begin{figure}[!htb]
\centering
	\includegraphics[width=0.5\textwidth, clip=true]{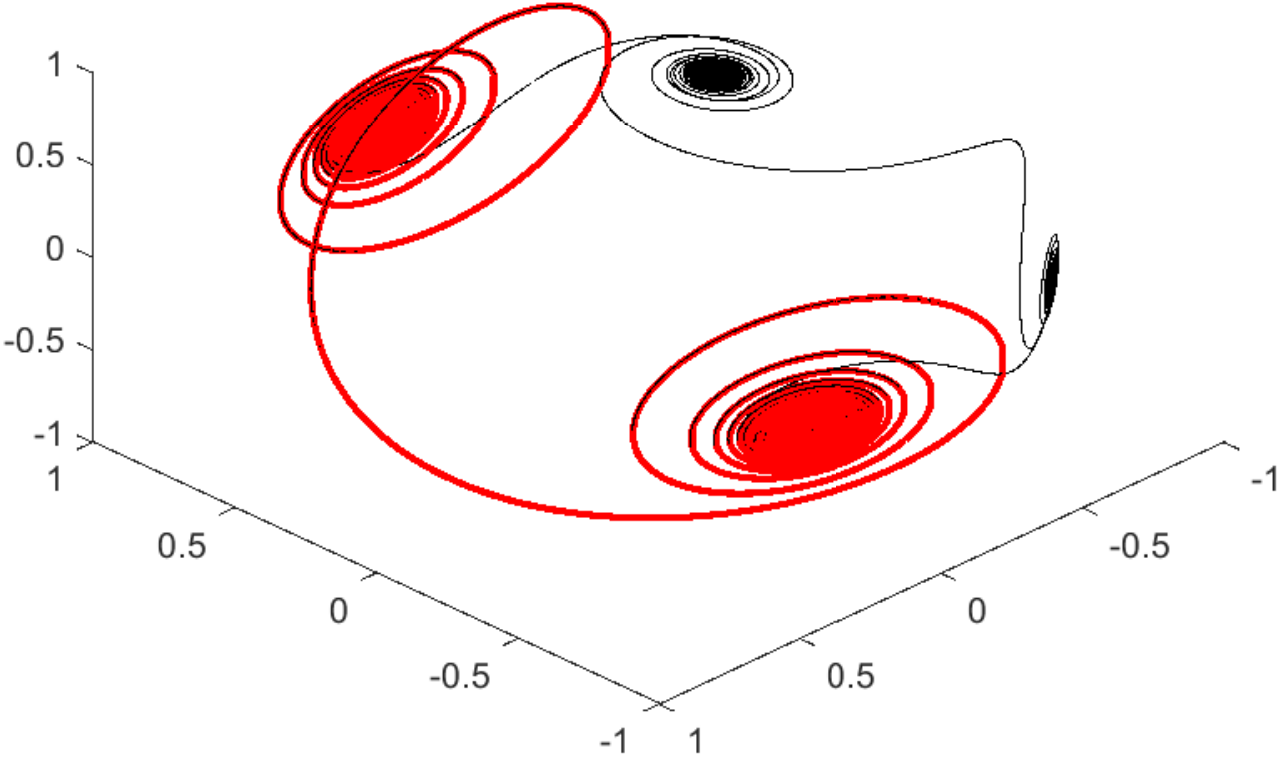}
\caption{Comparison at $t = \pi / 103680$ between the evolution of $\T_{num}$ (black) corresponding to \eqref{e:irregularfourT0}, with $N = 30720$, and $\T_{rot}$ (thick red) corresponding to the corner at $s = 3\cdot2\pi/32$. As in Figure \ref{f:comparison}, except for the thicker stroke, the red curve is visually undistinguishable from the black one.}\label{f:comparison2}
\end{figure}

\section{Conclusions}

\label{s:Conclusions}

In this paper, we have studied the evolution of a regular polygon according to the vortex filament equation (VFE). This equation is a geometric flow also known as the binormal flow or the localized induction approximation (LIA). The latter name is the one more frequently used in the literature of fluid mechanics, and refers to the velocity that an isolated vortex filament induces on itself as an effect of curvature. This velocity is obtained from the Biot-Savart law using a cut-off procedure to compute the integral, so just local effects are considered. LIA has been also extensively used as a model to describe vortices in superfluids. At this respect, see, for example, \cite{FMOHL}, where the direct observation of Kelvin waves by quantized reconnection is reported.

In the first part of this paper (Sections 2-4), we have given numerical, algebraic and analytic arguments to prove that the evolution at infinitesimal times (i.e., $t=0^+$) of one corner is independent from the other ones. As a consequence, we can understand the dynamics of a regular polygon as the nonlinear interaction of different filaments, one for each corner. In fact, the mathematical description of the regular polygon is given by its curvature, which is written as a sum of periodic deltas that have an amplitude determined by the Gau{\ss}-Bonet theorem (i.e., $2\pi/M$, if $M$ is the number of sides). Therefore, from this point of view, we are considering the interaction of infinitely many filaments. Let us recall that the case of a filament with just one corner has been extensively studied from a theoretical point of view in \cite{BV0,BV1,BV2,BV3,BanicaVega2016,delahoz2007,GutierrezRivasVega2003}, and, from a numerical point of view, in \cite{buttke87,DelahozGarciaCerveraVega09}, so we can therefore say that it is fairly well understood. In particular, it is known that corners can appear and disappear in a stable way.

As it was observed in \cite{HozVega2014}, the dynamics for later times exhibits a Talbot effect, so that, at rational multiples of the period (i.e., $(p/q)(2\pi/M^2)$), new polygons appear. The number of sides of the new polygons depend on $q$, and they behave in a random way (see \cite{HozVega2014b}). As a consequence, an intermittent phenomenon of creation/annihilation of corners is observed. One of the simplest examples at the linear level of this intermittency generated by the Talbot effect is the so-called Riemann's nondifferentiable function \cite{Ja}:
\begin{equation}
\label{e:riemann2}
\phi(x) = \sum_{n = 1}^\infty\frac{\sin(\pi n^2 x)}{n^2}.
\end{equation}

\noindent At this respect,  it is proved in \cite{Ja} that the set of times that have the same regularity (measured in terms of their H\"older exponent) is a fractal which has a dimension that fits within the so-called Frisch-Parisi conjecture; we refer the reader to \cite{Ja} for the details. The Talbot effect in nonlinear dispersive equations has been studied in \cite{CO1,CO2,ET2,Olver}. All these results are at the subcritical level of regularity, so the complex dynamics is the one inherited by the free evolution.

In Section \ref{s:energy} of this paper, we have given very strong numerical evidence that intermittency and multifractality are also present in the evolution of a regular polygon according to VFE. Although the dynamics is very similar at the qualitative level, it depends nonetheless on $M$, as can be inferred from Figures \ref{f:momentum} and \ref{f:fingerprintriemann}, corresponding to the time plot of a truncated linear momentum. In Section \ref{s:momentum}, we have shown that this Talbot effect is nevertheless purely nonlinear due to the existence of a phenomenon of transfer of energy that can not be present at the linear level. We believe that this latter property is a far reaching result. 

One could wonder if the above properties are still true if a general polygon instead of a regular one is considered. This is studied in the last section of this paper, where, besides proving at the numerical level the stability of the previous results, we observe that the periodicity in time of the dynamics is lost. This, of course, opens the way to create much more complicated dynamics, by choosing the sides of the polygons in an appropriate way.

The final conclusion is that we have exhibited a nonlinear geometric flow, obtained as an approximation of the evolution of vortex filaments, and which is amenable to having a nonlinear Talbot effect that, besides the usual properties of randomness, multifractality, and intermittency, has also transfer of energy.

\section*{Acknowledgements}

We want to thank V. Banica and C. Garc\'ia-Cervera for very enlightening conversations concerning the last two sections of this paper. Part of this work was started while the second author was visiting MSRI, within the New Challenges in PDE 2015 program.

\end{document}